\setlist{leftmargin=5.5mm}
\newenvironment{proof}{\par\noindent{\bf Proof\ }}{\hfill\BlackBox\\[2mm]}
\newtheorem{remark}{Remark}
\DeclareMathOperator*{\argmin}{argmin}
\newcommand{\R}{\mathbb{R}}
\newtheorem{lemma}{Lemma}[section]
\newtheorem{theorem}{Theorem}[section]
\newtheorem{proposition}{Proposition}[section]
\newtheorem{assumption}[theorem]{Assumption}
\newcommand{\defeq}{\stackrel{def}{=}}
\def\E{{\bf E}}
\def\exp{{\rm exp}}
\def\hx{{\hat x}}
\def\nh{{n+\frac{1}{2}}}
\def\ev2{{\expec{\|v_n\|^2}}}
\def\ef2{{\expec{\|\nabla f(x_n)\|^2}}}
\newcommand{\expec}[1]{\mathbf{E}\left[ #1 \right] }
\newcommand{\expect}[2]{\mathbf{E}_{#1}\left[ #2 \right] }
\newcommand{\Ea}[1]{\mathbf{E}_\alpha#1  }
\newcommand\numberthis{\addtocounter{equation}{1}\tag{\theequation}}
\newcommand{\pd}[2]{\frac{\partial #1}{\partial #2}}
\title{Stochastic Zeroth-order Discretizations of Langevin Diffusions for Bayesian Inference}
\author[1]{Abhishek Roy\thanks{abroy@ucdavis.edu}}
\author[2]{ Lingqing Shen\thanks{lingqins@andrew.cmu.edu. Work done while visiting UC Davis as an exchange student.}}
\author[1]{Krishnakumar Balasubramanian\thanks{kbala@ucdavis.edu}}
\author[3]{Saeed Ghadimi\thanks{sghadimi@uwaterloo.ca}}
\affil[1]{Department of Statistics, University of California, Davis}
\affil[2]{Tepper School of Business, Carnegie Mellon University}
\affil[3]{Department of Management Sciences, University of Waterloo}
\begin{document}
\maketitle

\begin{abstract}
Discretizations of Langevin diffusions provide a powerful method for sampling and Bayesian inference. However, such discretizations require evaluation of the gradient of the potential function. In several real-world scenarios, obtaining gradient evaluations might either be computationally expensive, or simply impossible. In this work, we propose and analyze stochastic zeroth-order sampling algorithms for discretizing overdamped and underdamped Langevin diffusions. Our approach is based on estimating the gradients, based on Gaussian Stein's identities, widely used in the stochastic optimization literature. We provide a comprehensive sample complexity analysis -- number noisy function evaluations to be made to obtain an $\epsilon$-approximate sample in Wasserstein distance -- of stochastic zeroth-order discretizations of both overdamped and underdamped Langevin diffusions, under various noise models. We also propose a variable selection technique based on zeroth-order gradient estimates and establish its theoretical guarantees. Our theoretical contributions extend the practical applicability of sampling algorithms to the noisy black-box and high-dimensional settings. 
\end{abstract}

\section{Introduction}\label{sec:intro}

First generation sampling algorithms, for example, Metropolis-Hastings algorithm are oblivious to the geometry of the target density as a result of which they suffer from slower rates of convergence. However, they are efficiently implementable and widely applicable, as they are based only on exact density function evaluations; see, for example,~\cite{belisle1993hit, kannan1995isoperimetric, mengersen1996rates,lovasz1990mixing, lovasz2007geometry, dunson2020hastings, martin2020computing}, for more details about such algorithms. Motivated by statistical physics principles, various researchers developed second-generation of sampling algorithms, that leverage geometric information regarding the target density~\citep{roberts1998optimal,neal2011mcmc, roberts1996exponential,stramer1999langevina, stramer1999langevinb, girolami2011riemann, barp2018geometry}. Such algorithms are based on gradient-based discretizations of continuous-time underdamped or overdamped Langevin diffusions. Although such algorithms were developed much earlier, recently strong theoretical guarantees have been established for sampling in the works of~\cite{durmus2017nonasymptotic, durmus2016high,dalalyan2017theoretical, dalalyan2017user,cheng2017underdamped, cheng2018sharp, dwivedi2018log} and several others. Such algorithms typically perform empirically better and exhibit faster rates of convergence compared to the first generation sampling algorithms mentioned above.

In this work, given a density function $\pi:\mathbb{R}^d \to \mathbb{R}$, with potential function $f:\mathbb{R}^d \to \mathbb{R}$, of the form
\begin{align}\label{eq:sampling}
\pi(\theta) = \frac{e^{-f(\theta)}}{\int_{\mathbb{R}^d} e^{-f(r)} \, d r }
\end{align}
we consider the problem of sampling when we only have access to noisy evaluations of the potential function $f$. We refer to this problem as \emph{stochastic zeroth-order sampling}. Our approach is based on discretizing overdamped and underdamped Langevin diffusions using stochastic zeroth-order oracles, which, when queried returns noisy unbiased evaluations, $F(x,\xi)$, of the function value $f(x)$. That is, we have $\E[F(x,\xi)] =f(x)$, where $\xi$ is the random noise in our function evaluations, which is not necessarily an additive noise. Our motivations for studying such problems are three-fold: 
\begin{itemize}[noitemsep,leftmargin=0.1in]
\item \textbf{Computationally Complexity of Gradient-evaluation}:  A majority of existing discretizations of Langevin diffusions require computing the gradient of the potential function $f$ in each iteration. It is well-know that for a wide class of functions which could be expressed based on compositions of elementary differentiable functions, the computational cost of evaluating the gradient is $4$ to $5$ times more than that of evaluating the function; see, for example~\cite{griewank2008evaluating}. Furthermore, in order to compute the gradient, it is necessary to store several intermediate gradients, which increases the memory requirement.  Hence, for several potential functions, Langevin-discretization based sampling algorithms might end up spending more time and memory for computing and storing gradients in each iteration. To reduce the wall-clock runtimes of such sampling algorithms, it is of interest to develop discretization of Langevin diffusions based only on function evaluations. 

\item \textbf{Non-availability of Analytic form of Potential Function:} In a variety of scientific problems, the potential function $f$ might not even be available in closed form, either due to the sheer size of the dataset (see, for example~\cite{sherlock2015efficiency}), or due to the constraints in the physical process underlying the statistical model (see, for example~\cite{beaumont2003estimation, golightly2011bayesian,knape2012fitting}). In these situations, we do not have access to the analytical from of true potential function, let alone its gradients, which are required for discretizing Langevin diffusions. Hence, it is of great interest to develop discretization of Langevin diffusions based on noisy function evaluations to widen the applicability of Bayesian inference. It is worth mentioning here that, in the case of Metropolis-Hastings algorithms,~\cite{andrieu2009pseudo, sherlock2015efficiency} developed and analyzed the so-called Pseudo-Marginal Metropolis-Hasting algorithms which work with unbiased noisy density evaluations. However, similar algorithms for sampling based on discretizing Langevin diffusions are lacking in the literature, except for the recent work on~\cite{alenlov2016pseudo} which considered a pseudo-marginal Hamiltonian Monte Carlo algorithm. Our second motivation for this work is to fill this gap and to develop and analyze a unified framework for stochastic zeroth-order discretization of Langevin diffusions for Bayesian inference.

\item \textbf{Automating Bayesian Inference:} From a practitioner's perspective, statistical modeling is an inherently iterative process. The probabilistic model is typically refined during the scientific process based on the fit to the data. In the context of sampling, this process could be understood as changing the potential function $f$ in the modeling process. However, each time the function $f$ is changed, it is also invariably required to re-code the sampling algorithm based on the analytically computed gradient of the function $f$ under consideration. Our third motivation in this work is to automate this process, to help the practitioner with quick experimentation. As we will see later, our proposed methodology allows for sampling from a wide variety of density functions in a unified manner, as long as we have an oracle to obtain (noisy) evaluations of the potential function $f$. It is worth mentioning that recently~\cite{ranganath2014black,ranganath2015deep,kucukelbir2017automatic} developed related automated Bayesian inference algorithms based on variational inference.  
\end{itemize}

\subsection{Preliminaries}
Consider the continuous-time Langevin diffusion process $\{ L_T: T \in \mathbb{R}_+\}$ given by the following  stochastic differential equation, 
\begin{align}\label{eq:ctsde}
d L_T = - \nabla f(L_T) dT + \sqrt{2} d W_T,
\end{align}
where $T \in \mathbb{R}_+$ and $\{W_T : T \in \mathbb{R}_+\}$ is a $d$-dimensional Brownian motion and $\nabla f(\theta) \in \mathbb{R}^d$ denotes the gradient of $f(\theta)$. The Euler-Maruyama discretization of the process in~\eqref{eq:ctsde} is given by the following Markov chain:
\begin{align}\label{eq:lmc}
x_{n+1} = x_{n} - h_{n+1} ~\nabla f(x_{n}) + \sqrt{2 h_{n+1}} \varepsilon_{n+1},
\end{align}
for the discrete time index $n = 0, 1, 2\ldots$. Here $\varepsilon_{n} \in \mathbb{R}^d$ is a sequence of independent standard Gaussian vectors, $h_n $ denotes the step-size and an initial point $ x_{0}$
is assumed to be given. The above discretization is called as the Langevin Monte Carlo (LMC) sampling algorithm. The update step of the LMC sampling algorithm shares similarity with the standard gradient descent algorithm from the optimization literature. As a prelude to the rest of the paper, our main idea in this work is to provide a non-asymptotic analysis of using stochastic zeroth-order gradient estimators (described in details in Section~\ref{sec:mainmethod}) in place of the true gradient in~\eqref{eq:lmc} and related discretizations.  

Denoting the distribution of the random vector $x_{n}$ by $\varpi_n$, to evaluate the performance of the sampling algorithm, the 2-Wasserstein distance between $\varpi_n$ and the target density $\pi(\theta)$ is considered. For measures, $p$ and $q$ defined on $(\mathbb{R}^d, \mathcal{B}(\mathbb{R}^d))$, the 2-Wasserstein distance is defined as:
\begin{align}
W_2(p,q) := \left(\underset{\varrho \in \varrho(p,q)}{\inf} \int_{\mathbb{R}^d \times \mathbb{R}^d}  \| \theta -\theta'\|_2^2 \, d\varrho(\theta,\theta') \right)^{1/2},
\end{align}
where $ \varrho(p,q)$ is the set of joint distribution that has $p$ and $q$ as its marginals. The performance of the sampling updates is measured by the above 2-Wasserstein distance between the distribution $\varpi_n$ and the target density $\pi$, i.e., $W_2(\varpi_n,\pi)$. Specifically, the \emph{iteration complexity} of the algorithm is defined as the  number of iterations $N$, required to get $W_2(\varpi_N,\pi) \leq \epsilon$. We also define the notion of \emph{oracle complexity} which is the number of calls to the first-order or stochastic zeroth-order oracle used to obtain $W_2(\varpi_N,\pi) \leq \epsilon$. For the LMC algorithm in~\eqref{eq:lmc}, as we use only one gradient evaluation in each iteration, the oracle and iteration complexity becomes the same. 

In order to obtain theoretical guarantees, a common assumption made in the literature on LMC is that the function $f$ is smooth and strongly convex.
\begin{assumption}\label{smooth_assum}
Letting $\|\cdot  \| = \| \cdot \|_2$ denote the Euclidean norm on $\mathbb{R}^d$, the potential function $f$,
\begin{enumerate}[leftmargin=0.85cm]
\item[\textbf{A1}:] is strongly convex i.e., $f(\theta) - f(\theta') - \nabla f(\theta')^\top (\theta - \theta') \geq \frac{m}{2} \| \theta - \theta' \|^2$, for $m >0$.
\item[\textbf{A2}:] has Lipschitz continuous gradient, i.e., $\|\nabla f(\theta) -\nabla f(\theta')\| \le M\|\theta-\theta'\|$ for $M >0$.
\end{enumerate}
\end{assumption}
The above assumptions on the potential function in-turn makes the density function $\pi$ strongly log-concave and smooth. Such an assumption is satisfied in several sampling and Bayesian inference problems including sampling from mixture of Gaussian distributions and Bayesian logistic regression. Further assuming access to certain inaccurate gradients, \cite{dalalyan2017user} provide theoretical guarantees for sampling under Assumption~\ref{smooth_assum}. Specifically, instead of the true gradient $\nabla f(x_{n})$ in each step, it is assumed that we observe $g_{n} = g(x_{n}) = \nabla f(x_{n}) + \zeta_n$, for a sequence of random noise vectors $\zeta_n$ that satisfies certain bias and variance assumption. Then, the noisy LMC updates corresponds to the case of the updates in Equation~\ref{eq:lmc}, with $\nabla f(x_{n})$ replaced by $g_{n}$. For such an update,~\cite{dalalyan2017user} have the following non-asymptotic result. Before providing the result, we remark that due to the assumptions on the stochastic gradient made in~\eqref{sfo_assumption}, this setting is referred to as stochastic first-order setting.

\begin{theorem}\citep{dalalyan2017user}\label{thm:startingpoint}
Assume that the bias and variance of $\zeta_n$ satisfies respectively, for all $n=1, 2, \ldots$,
\begin{align}\label{sfo_assumption}
\E[\lVert\E(\zeta_n|x_{n})\rVert^2] \leq\delta_b^2d \qquad~\text{and}\qquad
 \E[\lVert\zeta_n-\E(\zeta_n|x_{n})\rVert^2] \leq\delta_v^2d.
 \end{align}
 Let the function $f$ satisfy Assumption~\ref{smooth_assum}. If $h \leq 2/(m+M)$, the following result holds true.
\begin{align*}
W_2(\varpi_n,\pi) \leq (1-mh)^n W_2(\varpi_0,\pi) + 1.65\frac Mm (hd)^{1/2} + \frac{\delta_b \sqrt{d}}{m} + \frac{\delta_v^2 (hd)^{1/2}}{1.65M+\sigma \sqrt{m}}.
\end{align*}
\end{theorem}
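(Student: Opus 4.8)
The plan is to derive a one-step inequality of the form $W_2(\varpi_{n+1},\pi)\le(1-mh)\,W_2(\varpi_n,\pi)+(\text{per-step error})$ and then sum the resulting geometric recursion. The key device is a synchronous coupling of three processes at step $n$: take $(x_n,y_n)$ to be an optimal $W_2$-coupling of $\varpi_n$ and $\pi$, so that $\E\|x_n-y_n\|^2=W_2(\varpi_n,\pi)^2$; run the noisy LMC update $x_{n+1}=x_n-hg_n+\sqrt{2h}\,\varepsilon_{n+1}$; and run the continuous Langevin diffusion $(\bar L_t)_{t\in[0,h]}$ with $\bar L_0=y_n$, driven by a Brownian motion whose increment over $[0,h]$ equals $\sqrt h\,\varepsilon_{n+1}$, and set $y_{n+1}=\bar L_h$. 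Since $\pi$ is invariant for the diffusion, $y_{n+1}\sim\pi$, and since the injected Gaussian noise is shared, the stochastic integrals cancel on subtraction, leaving
\begin{align*}
x_{n+1}-y_{n+1}=\;&\underbrace{\big[(x_n-h\nabla f(x_n))-(y_n-h\nabla f(y_n))\big]}_{T_1}-\underbrace{h\,\E[\zeta_n\mid x_n]}_{T_2}\\
&\quad{}-\underbrace{h\big(\zeta_n-\E[\zeta_n\mid x_n]\big)}_{T_3}+\underbrace{\int_0^h\big(\nabla f(\bar L_t)-\nabla f(y_n)\big)\,dt}_{T_4}.
\end{align*}

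Next I would bound each piece. For $T_1$, expressing the gradient increment via the mean-value Hessian and using A1--A2, the map $x\mapsto x-h\nabla f(x)$ is $(1-mh)$-Lipschitz precisely when $h\le 2/(m+M)$, so $\|T_1\|\le(1-mh)\|x_n-y_n\|$. For $T_2$, the first bound in~\eqref{sfo_assumption} gives $\E\|T_2\|^2\le h^2\delta_b^2 d$. The term $T_3$ is, conditionally on $x_n$, mean zero and independent of $(y_n,\varepsilon_{n+1})$, hence of $T_1,T_2,T_4$; therefore all cross terms with $T_3$ vanish in expectation and $T_3$ contributes only additively through $\E\|T_3\|^2\le h^2\delta_v^2 d$. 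Finally $T_4$ is the Euler discretization error: by $M$-Lipschitzness of $\nabla f$ and Jensen's inequality, $\E\|T_4\|^2\le M^2 h\int_0^h\E\|\bar L_t-y_n\|^2\,dt$, and one controls $\E\|\bar L_t-y_n\|^2$ along the stationary diffusion directly from the SDE, bounding the drift contribution by $O\!\big(t^2\,\E_\pi\|\nabla f\|^2\big)$ and the Brownian contribution by $O(td)$, together with the identity $\E_\pi\|\nabla f\|^2=\E_\pi[\Delta f]\le Md$ (integration by parts and $\nabla^2 f\preceq MI$). This yields $\E\|T_4\|^2\le C\,M^2 d h^3$ for an absolute constant $C$, i.e.\ $\|T_4\|_{L^2}$ of order $M(dh^3)^{1/2}$, with the sharp constant producing the factor $1.65$.

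Assembling these via the triangle inequality in $L^2(\R^d)$ — adding the norms of $T_1,T_2,T_4$ and keeping the $T_3$-variance under the square root — gives the scalar recursion $W_2(\varpi_{n+1},\pi)^2\le\big((1-mh)\,W_2(\varpi_n,\pi)+h\delta_b\sqrt d+c\,M(dh^3)^{1/2}\big)^2+h^2\delta_v^2 d$. Unrolling this recursion (or passing to its fixed point) and using $\sum_{k\ge0}(1-mh)^k=1/(mh)$ yields the stated bound: the $(1-mh)^nW_2(\varpi_0,\pi)$ head term survives, the per-step bias $h\delta_b\sqrt d$ and discretization error $M(dh^3)^{1/2}$ are divided by $mh$ (giving $\delta_b\sqrt d/m$ and $\tfrac{M}{m}(hd)^{1/2}$), and a more careful bookkeeping of the square-root combination of the deterministic and variance parts produces the last term with denominator $1.65M+\sigma\sqrt m$. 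I expect the genuinely delicate steps to be (i) the estimate of $\E\|\bar L_t-\bar L_0\|^2$ for the diffusion started in stationarity, which is where the moment bound $\E_\pi\|\nabla f\|^2\le Md$ and a Gr\"onwall-type argument enter, and (ii) tracking the constants and the square-root combination closely enough to land exactly the factor $1.65$ and the precise form of the variance term rather than an $O(\cdot)$ surrogate.
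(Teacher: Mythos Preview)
The paper does not supply its own proof of this theorem: it is quoted verbatim from \cite{dalalyan2017user} as background, with no argument given here. That said, your outline is essentially the proof from that reference --- the synchronous coupling of the discrete noisy iterate with the stationary continuous diffusion sharing the Brownian increment, the four-part decomposition into contraction $(1-mh)\|\Delta_n\|$, bias $h\E[\zeta_n\mid x_n]$, centered noise $h(\zeta_n-\E[\zeta_n\mid x_n])$ entering only through its second moment, and discretization drift $V=\int_0^h(\nabla f(L_s)-\nabla f(L_0))\,ds$ bounded by $1.65M(h^3d)^{1/2}$ via $\E_\pi\|\nabla f\|^2\le Md$ --- followed by the scalar recursion lemma. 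The paper's own proof of Theorem~\ref{thm:zlmc} explicitly invokes this same framework (citing ``Theorem~4'' and ``Lemma~9'' of \cite{dalalyan2017user}), so your proposal aligns with both the cited source and the methodology used elsewhere in the paper.
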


\begin{remark}
More generally, if the bounded bias and variance condition are changed to
\begin{align*}
 \E[\lVert\E(\zeta_n|x_{n})\rVert^2] \leq\delta_b^2d^\alpha \qquad~\text{and}~\qquad  \E[\lVert\zeta_n-\E(\zeta_n|x_{n})\rVert^2]\leq\delta_v^2d^\beta,
 \end{align*}
respectively, for some $\alpha, \beta > 0$, the conclusion turns into
\begin{align*}
W_2(\varpi_n,\pi)\leq &(1-mh)^n W_2(\varpi_0,\pi)+\frac{1.65M(hd)^{1\slash2}}{m}+\frac{\delta_b d^{\alpha\slash2}}{m} \\ &+\frac{\delta_v^2hd^\beta}{1.65 M(hd)^{1\slash2}+\delta_b d^{\alpha\slash2}+\delta_v(mh)^{1\slash2}d^{\beta\slash2}}. \end{align*}
Furthermore, in the case that $\beta>\max\{1,\alpha\}$, the last term is dominated by $d^{\beta\slash2}$.
\end{remark}

\subsection{The Zeroth-order Methodology}\label{sec:mainmethod}

The use of zeroth-order information (i.e., noisy function evaluations) for optimizing a function goes back to the works of~\cite{kiefer1952stochastic, blum1954multidimensional}, that used stochastic version of finite-difference gradient approximation methods for estimating the maximum of a regression function (or equivalently mode of a density function). Since, then zeroth-order optimization has developed into an independent field in itself; see, for example~\cite{spall2005introduction, conn2009introduction, audet2017derivative, larson2019derivative} for an more up-to-date account of this field. More recently, the focus has been more on developing a non-asymptotic understanding of stochastic zeroth-order optimization~\cite{ghadimi2013stochastic, duchi2015optimal, nesterov2017random, balasubramanian2018zeroth}. Despite the fact that stochastic zeroth-order optimization is a well-developed field, to the best of our knowledge, there is no prior work on using related techniques for the closely related problem of zeroth-order discretizations of Langevin diffusions; specifically in terms of non-asymptotic analysis.

We now describe the precise assumption made on the \emph{stochastic zeroth-order oracle} in the first part of this work. 
\begin{assumption}\label{as:zo}
	For any $\theta\in\mathbb{R}^d$, the stochastic zeroth-order oracle outputs an estimator $F(\theta,\xi)$ of $f(\theta)$ such that, $\expec{F(\theta,\xi)}=f(\theta)$, $\expec{\nabla F(\theta,\xi)}=\nabla f(\theta)$, and $\expec{\|\nabla F(\theta,\xi)-\nabla f(\theta)\|^2}\leq \sigma^2$.
\end{assumption}
The assumption above assumes that we have accesses to a stochastic zeroth-order oracle which provides unbiased function evaluations with bounded variance. It is worth noting that in the above, we do not necessarily assume the noise $\xi$ is additive. Our gradient estimator is then constructed by leverage the Gaussian smoothing technique~\citep{nesterov2017random, ghadimi2013stochastic,balasubramanian2018zeroth}, which is amenable for fine-grained non-asymptotic analysis. Specifically, for a point $\theta \in \mathbb{R}^d$, we define an estimate $g_{\nu,b}(\theta)$, of  the gradient $\nabla f(\theta) $ as follows:
\begin{align}\label{eq:gradest}
g_{\nu,b}(\theta) = \frac{1}{b} \sum_{i=1}^b \frac{F(\theta+\nu u_i,\xi_i) - F(\theta,\xi_i) }{\nu} u_i
\end{align}
where $u_i \sim N(0,I_d)$ and are assumed to be independent and identically distributed. An interpretation of the gradient estimator in~\eqref{eq:gradest} as a consequence of Gaussian Stein's identity, popular in the statistics literature~\cite{stein1972bound}, was provided in~\cite{balasubramanian2018zeroth}. Finally, the parameter $b$ is called as the batch-size parameter. It turns out that in the stochastic zeroth-order setting invariably we require $b>1$, which in turn leads to the (zeroth-order) oracle complexity being an order $b$ times that of iteration complexity. In Section~\ref{sec:zolmc} and~\ref{as:lsi}, we use the above gradient estimation technique in the context of discretizing overdamped and underdamped Langevin diffusion and develop their oracle and iteration complexities. In order to establish the results, we will use the following Lemma due to \cite{balasubramanian2018zeroth} which provides an upper bound on the variance of $g_{\nu,b}$. 
\begin{lemma}\label{lm:zogradvar}\cite{balasubramanian2018zeroth}
	Let $g_{\nu,b}$ be defined as in \eqref{eq:gradest}. Then under Assumption~\ref{as:zo}, and condition A1 of Assumption~\ref{smooth_assum}, we have,
	\begin{align*}
	&\expec{\|g_{\nu,b}(\theta)-\nabla f_\nu(\theta)\|^2}\leq\frac{2(d+5)(\|\nabla f(\theta)\|^2+\sigma^2)}{b}+\frac{\nu^2 M^2(d+3)^3}{2b} \numberthis\label{eq:ggradfnucon}\\
	&\expec{\|g_{\nu,b}(\theta)-\nabla f(\theta)\|^2}\leq \frac{4(d+5)(\|\nabla f(\theta)\|^2+\sigma^2)}{b}+\frac{3\nu^2 M^2(d+3)^3}{2}\numberthis\label{eq:ggradfcon}
	\end{align*}
\end{lemma}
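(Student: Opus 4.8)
The plan is to follow the standard analysis of Gaussian-smoothed zeroth-order gradient estimators (as in Nesterov--Spokoiny and Ghadimi--Lan, and specifically~\cite{balasubramanian2018zeroth}), adapting it to the stochastic oracle of Assumption~\ref{as:zo}. Write $f_\nu(\theta)=\expec{f(\theta+\nu u)}$ for the Gaussian smoothing (with $u\sim N(0,I_d)$), and let $g^{(1)}_\nu(\theta,\xi,u):=\nu^{-1}\big(F(\theta+\nu u,\xi)-F(\theta,\xi)\big)u$ denote a single-sample version of~\eqref{eq:gradest}, so that $g_{\nu,b}(\theta)=\tfrac1b\sum_{i=1}^b g^{(1)}_\nu(\theta,\xi_i,u_i)$ is an average of $b$ i.i.d.\ copies. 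The first step is to compute the mean: since $\xi$ and $u$ are independent and $F$ is unbiased for $f$, taking the expectation over $\xi$ first replaces $F$ by $f$, and the Gaussian Stein identity then gives $\expec{\nu^{-1}(f(\theta+\nu u)-f(\theta))u}=\nabla f_\nu(\theta)$, so $\expec{g^{(1)}_\nu(\theta,\xi,u)}=\nabla f_\nu(\theta)$. Because the $b$ summands are i.i.d.\ with this common mean, $\expec{\|g_{\nu,b}(\theta)-\nabla f_\nu(\theta)\|^2}=\tfrac1b\expec{\|g^{(1)}_\nu(\theta,\xi,u)-\nabla f_\nu(\theta)\|^2}\le\tfrac1b\expec{\|g^{(1)}_\nu(\theta,\xi,u)\|^2}$, using $\expec{\|Z-\expec{Z}\|^2}\le\expec{\|Z\|^2}$. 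This produces the $1/b$ factor in~\eqref{eq:ggradfnucon} and reduces the problem to bounding the single-sample second moment.

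For the single-sample bound I would condition on $\xi$ and treat $F(\cdot,\xi)$ as a function with $M$-Lipschitz gradient (the smoothness being inherited by the realizations in this setting), so that $g^{(1)}_\nu(\theta,\xi,\cdot)$ is exactly its Gaussian smoothed-gradient estimator. A second-order Taylor expansion of $F(\cdot,\xi)$ controlled by A2 of Assumption~\ref{smooth_assum} gives $|F(\theta+\nu u,\xi)-F(\theta,\xi)-\nu\ve{\nabla F(\theta,\xi)}{u}|\le\tfrac{M\nu^2}{2}\|u\|^2$; dividing by $\nu$, multiplying by $u$, squaring, and using $\|a+b\|^2\le 2\|a\|^2+2\|b\|^2$ leaves the Gaussian moments $\expect{u}{\ve{\nabla F(\theta,\xi)}{u}^2\|u\|^2}$ and $\expect{u}{\|u\|^6}$, evaluated exactly via $\expect{u}{\ve{v}{u}^2\|u\|^2}=(d+2)\|v\|^2$ and $\expect{u}{\|u\|^6}=d(d+2)(d+4)$. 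Taking the expectation over $\xi$ and using $\expect{\xi}{\|\nabla F(\theta,\xi)\|^2}\le 2\|\nabla f(\theta)\|^2+2\sigma^2$ (from $\expec{\|\nabla F(\theta,\xi)-\nabla f(\theta)\|^2}\le\sigma^2$ in Assumption~\ref{as:zo}), then absorbing the numerical constants into the dimension-dependent polynomials (e.g.\ $d(d+2)(d+4)\le(d+3)^3$ and rounding the leading coefficient up to $d+5$), yields $\expec{\|g^{(1)}_\nu(\theta,\xi,u)\|^2}\le 2(d+5)(\|\nabla f(\theta)\|^2+\sigma^2)+\tfrac{\nu^2M^2(d+3)^3}{2}$; dividing by $b$ gives~\eqref{eq:ggradfnucon}.

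For~\eqref{eq:ggradfcon} I would pass from $\nabla f_\nu$ to $\nabla f$ by the triangle inequality: $\expec{\|g_{\nu,b}(\theta)-\nabla f(\theta)\|^2}\le 2\expec{\|g_{\nu,b}(\theta)-\nabla f_\nu(\theta)\|^2}+2\|\nabla f_\nu(\theta)-\nabla f(\theta)\|^2$, where the first term is bounded by twice~\eqref{eq:ggradfnucon} and the second by the standard smoothing-bias estimate $\|\nabla f_\nu(\theta)-\nabla f(\theta)\|\le\tfrac{\nu M}{2}(d+3)$ (again a consequence of A2), which contributes $2\cdot\tfrac{\nu^2M^2}{4}(d+3)^2\le\tfrac{\nu^2M^2}{2}(d+3)^3$; adding the two $\nu^2M^2(d+3)^3$ contributions and using $b\ge 1$ to drop the $1/b$ factors on the variance terms gives the stated $\tfrac{3\nu^2M^2(d+3)^3}{2}$. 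The one genuinely delicate part of the argument is the constant bookkeeping in the second paragraph --- tracking the Gaussian moments and the several $\|a+b\|^2\le 2\|a\|^2+2\|b\|^2$ splits precisely enough to land on the coefficients $(d+5)$ and $(d+3)^3$ rather than loose surrogates --- together with the point that the stochastic noise must be routed through the \emph{gradient} $\nabla F-\nabla f$, which is what Assumption~\ref{as:zo} controls, by applying the Taylor expansion to $F(\cdot,\xi)$ and then exploiting the independence of $\xi$ and $u$.
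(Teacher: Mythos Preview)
The paper does not prove this lemma; it simply cites it from~\cite{balasubramanian2018zeroth}. Your sketch is the standard argument from that reference and is structurally correct: mean via Stein's identity, the $1/b$ reduction from i.i.d.\ averaging, a Taylor expansion controlled by smoothness, Gaussian moment identities, and the smoothing-bias bound to pass from $\nabla f_\nu$ to $\nabla f$.

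Two points deserve attention. First, your constant tracking slips at one spot: you bound $\expect{\xi}{\|\nabla F(\theta,\xi)\|^2}\le 2\|\nabla f(\theta)\|^2+2\sigma^2$, which combined with the factor $2(d+2)$ from the Gaussian moment gives $4(d+2)$, and $4(d+2)>2(d+5)$ for $d\ge 2$, so the claimed absorption into $2(d+5)$ fails. The fix is immediate: since Assumption~\ref{as:zo} gives $\expec{\nabla F(\theta,\xi)}=\nabla f(\theta)$, the bias--variance identity yields the tighter $\expect{\xi}{\|\nabla F(\theta,\xi)\|^2}=\|\nabla f(\theta)\|^2+\expec{\|\nabla F-\nabla f\|^2}\le\|\nabla f(\theta)\|^2+\sigma^2$, and then $2(d+2)\le 2(d+5)$ goes through. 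Second, your Taylor step requires that each realization $F(\cdot,\xi)$ has an $M$-Lipschitz gradient, which you acknowledge only parenthetically. This is not implied by condition~\textbf{A2} on $f$ together with Assumption~\ref{as:zo} as stated; it is an additional (standard) hypothesis in~\cite{balasubramanian2018zeroth} and should be stated explicitly rather than assumed to be ``inherited.'' (Incidentally, the lemma statement's reference to condition~\textbf{A1} appears to be a typo for~\textbf{A2}, since only $M$ enters.)
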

where $f_\nu(\theta)=\mathbf{E}_u[f(\theta+\nu u)]$.
\vspace{0.1in}

 \textit{One-point versus two-point evaluation:} The gradient estimator in~\eqref{eq:gradest} is referred to as the two-point estimator in the literature. The reason is that, for a given random vector $\xi$, it is assumed that the stochastic function in~\eqref{eq:gradest} could be evaluated at two points,$F(\theta_1,\xi)$ and $F(\theta_2,\xi)$. Such an assumption is satisfied in several statistics, machine learning and simulation based optimization and sampling; see for example in~\cite{spall2005introduction, mokkadem2007companion, dippon2003accelerated, agarwal2010optimal, duchi2015optimal, ghadimi2013stochastic, nesterov2017random}. Yet another estimator is the one-point estimator which assumes that for each $\xi$, we observe only one noisy function evaluation $F(\theta,\xi)$. Admittedly, the one-point setting is more challenging than the two-point setting. Specifically, in the one-point feedback setting, Lemma~\ref{lm:zogradvar} no longer holds. From a theoretical point of view, the use of two-point evaluation based gradient estimator is primarily motivated by the sub-optimality (in terms of oracle complexity) of one-point feedback based stochastic zeroth-order optimization methods either in terms of the approximation accuracy or dimension dependency. 
 
 The use of one-point feedback for stochastic zeroth-order gradient estimation could be traced back to~\cite{nemyud:83}. Motivated by this, there has been several works in the machine learning community focusing on leveraging it for zeroth-order convex optimization. Specifically, considering the class of convex functions (without any further smoothness assumptions)  and adversarial noise (i.e., roughly speaking, with noise vectors not necessarily assumed to be independent and identically distributed (i.i.d.)),~\cite{bubeck2017kernel} proposed a polynomial-time algorithm and an oracle complexity of $\mathcal{O}(d^{21}/\epsilon^2)$. This was improved to $\mathcal{O}(d^5/\epsilon^2)$ recently in~\cite{lattimore2020improved}. Further assuming Lipschitz smooth convex functions,~\cite{belloni2015escaping} and~\cite{gasnikov2017stochastic}, in the i.i.d noise case, obtained an oracle complexity of $\mathcal{O}(d^{7.5}/\epsilon^2)$  and $\mathcal{O}(d/\epsilon^3)$ respectively. The best known lower bound in this case is known to be $\mathcal{O}(d^{2}/\epsilon^2)$, which was established by~\cite{shamir2013complexity}. Further assuming $(\beta-1)$ differentiable derivatives, for $\beta> 2$,~\cite{bach2016highly} obtained as oracle complexity of  $\mathcal{O}(d^{2}/\epsilon^{2\beta/(\beta-1)})$ and $\mathcal{O}(d^{2}/\epsilon^{(\beta+1)/(\beta-1)})$ respectively for the convex and strongly-convex setting, with i.i.d. noise case; see also~\cite{akhavan2020exploiting}. In contrast to the above discussion, with two-point feedback it is possible to obtain much improved oracle complexities (i.e., linear in dimension and optimal in $\epsilon$) for stochastic zeroth-order optimization, as illustrated in~\cite{nesterov2017random, ghadimi2013stochastic, duchi2015optimal,agarwal2010optimal}. Given this subtle differences between the two-point and one-point evaluation settings for stochastic zeroth-order gradient estimation, in  Section~\ref{sec:noise} we consider the effect of one-point gradient estimation technique for stochastic zeroth-order discretization of overdamped and underdamped Langevin diffusions.  

\subsection{Our Contributions}
Under the availability of the stochastic zeroth-order oracles, we make the following contributions to the literature on sampling.
\begin{enumerate}[noitemsep,leftmargin=0.14in]
\item We first consider the case of strongly log-concave and smooth densities and analyze a stochastic zeroth-order version of Euler-discretization of overdamped and underdamped Langevin diffusions, under the two-point feedback setting in Section~\ref{sec:zolmc}. For both cases, we characterize the oracle and iteration complexities to obtain $\epsilon$-approximate samples in term of $W_2$ metric.
\item We next consider in Section~\ref{sec:zormp}, a stochastic zeroth-order version of the recently proposed Randomized Midpoint Sampling method of the underdamped Langevin diffusion and characterize the oracle and iteration complexities to obtain $\epsilon$-approximate samples  in term of $W_2$ metric. We show that for certain range of $\epsilon$, this method achieves improved oracle complexity compared to the above method.
\item While the above contributions are for strongly log-concave densities, in Section~\ref{as:lsi}, we consider the more general class of densities satisfying log-Sobolev inequality and establish the oracle and iteration complexities of stochastic zeroth-order discretizations. 
\item While all of the above contributions use the two-point stochastic zeroth-order feedback setting, in Section~\ref{sec:noise}, we next consider the case of one-point feedback and characterize the corresponding oracle and iteration complexities for all the above discretizations.
 \item Next, in Section~\ref{sec:hdzlmc}, we consider variable selection for zeroth-order sampling. We specifically assume the unobserved function $f$ is sparse in the sense that it depends only on $s$ of the $d$ coordinates. We provide a variable selection method based on the estimated zeroth-order gradient, which in conjunction with the above discretizations reduces the oracle and iteration complexities to be only poly-logarithmically dependent on the dimensionality $d$ thereby enabling high-dimensional sampling.
\end{enumerate} 
Our contributions provide several theoretical insights on the performance of stochastic zeroth-order sampling algorithms, and widen the applicability of theoretically sound Bayesian inference to various practical situations where we do not know the analytical form of the potential function. All proofs are relegated to the appendix. 

\section{Oracle Complexity Results under Strong Log-concavity}\label{sec:zolmc}
We now leverage the stochastic zeroth-order gradient estimation methodology introduced in Section~\ref{sec:mainmethod} for discretizing underdamped and overdamped Langevin diffusions. Throughout this section, we assume the target density is strongly log-concave and smooth (recall Assumption~\ref{smooth_assum}). 
\subsection{Zeroth-Order Langevin Monte Carlo}
 Replacing the true gradient in the first-order Langevin Monte Carlo algorithm in~\eqref{eq:lmc}, with the zeroth-order gradient estimation in~\eqref{eq:gradest}, we obtain the following Zeroth-Order LMC (ZO-LMC) algorithm:
\begin{align}
x_{n+1} = x_{n} - h  ~ g_{\nu,b}(x_{n}) + \sqrt{2 h} \varepsilon_{n+1} \numberthis\label{eq:zolmcupdate}
\end{align}
for $n = 0, 1, 2,\cdots,N-1$. Apart from the choice of step-size $h$, the ZO-LMC also requires two additional tuning parameters, the smoothing parameter $\nu$ and the batch-size $b$ of the zeroth-order gradient estimator $b$, that need to be set. Although ZO-LMC could be interpreted as a form of LMC with inaccurate gradient as in~\cite{dalalyan2017user}, the corresponding theoretical result from~\cite{dalalyan2017user} cannot be used directly for obtaining the oracle complexity of ZO-LMC, as the variance of the gradient in~\eqref{eq:gradest} is not bounded unless we make restrictive assumptions on the true gradient of $f$. We now state the main result of this section, which describes the oracle complexity of ZO-LMC. 
\begin{theorem}\label{thm:zlmc}
Let the potential function $f$ satisfy Assumption~\ref{smooth_assum}. Then, for the ZO-LMC algorithm in~\eqref{eq:zolmcupdate}, under Assumption~\ref{as:zo}, by choosing
\begin{align*}
 h=\frac{\epsilon^2}{d^2}, \quad b=\max(1,\sigma^2) d, \quad \nu=\frac{\epsilon}{\sqrt{d}}, \numberthis\label{eq:lmcparamchoice}
\end{align*}
we have $W_2(\varpi_N,\pi)\leq \epsilon$ for $0<\epsilon\leq \min\left(d\sqrt{\frac{2}{M+m}},\sqrt{\frac{m(d+5)}{8M^2}}\right)$, after
\begin{align*}
	N={\mathcal{O}}\left(\frac{ d}{\epsilon^2}\cdot\log\left(\frac{d}{\epsilon}\right)\right).  \numberthis\label{eq:LMCNbound}
\end{align*}
iterations. Hence, the total number of calls to the stochastic zeroth-order oracle is given by,
\begin{align*}
Nb={\mathcal{O}}\left(\frac{\max(1,\sigma^2)\cdot d^2}{\epsilon^2}\cdot\log\left(\frac{d}{\epsilon}\right)\right). \numberthis\label{eq:LMCoraclebound}
\end{align*}
\end{theorem}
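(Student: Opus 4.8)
The plan is to follow the template of Theorem~\ref{thm:startingpoint}, treating the ZO-LMC update as an instance of noisy LMC where the noise vector is $\zeta_n = g_{\nu,b}(x_n) - \nabla f(x_n)$, but being careful that Lemma~\ref{lm:zogradvar} controls $\expec{\|g_{\nu,b}(x_n) - \nabla f(x_n)\|^2}$ only in terms of $\|\nabla f(x_n)\|^2$, which is itself a random quantity and not a priori bounded. So first I would set up a one-step recursion for $W_2(\varpi_{n+1},\pi)$ directly, rather than quoting Theorem~\ref{thm:startingpoint} as a black box. Introduce the synchronous coupling: let $x_n \sim \varpi_n$, let $y_n \sim \pi$ be coupled optimally, and let $y_{n+1}$ be one step of the exact continuous Langevin dynamics (or exact LMC) started at $y_n$, driven by the same Brownian increment $\varepsilon_{n+1}$. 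Then $W_2^2(\varpi_{n+1},\pi) \le \expec{\|x_{n+1}-y_{n+1}\|^2}$, and expanding the square gives the standard contraction term $(1-mh)^2\expec{\|x_n-y_n\|^2}$ plus a discretization-bias term of order $hd$ plus cross terms and a term $h^2\expec{\|\zeta_n\|^2}$ coming from the gradient-estimation error.

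The key technical step is bounding $\expec{\|\zeta_n\|^2} = \expec{\|g_{\nu,b}(x_n)-\nabla f(x_n)\|^2}$, where by \eqref{eq:ggradfcon} this is at most $\frac{4(d+5)(\expec{\|\nabla f(x_n)\|^2}+\sigma^2)}{b} + \frac{3\nu^2M^2(d+3)^3}{2}$. To handle $\expec{\|\nabla f(x_n)\|^2}$ I would use strong convexity and smoothness: $\|\nabla f(x_n)\| \le M\|x_n - \theta^*\|$ where $\theta^*$ is the minimizer, so $\expec{\|\nabla f(x_n)\|^2} \le M^2\expec{\|x_n-\theta^*\|^2}$, and the second moment of $\varpi_n$ can be controlled by $W_2^2(\varpi_n,\pi)$ plus the (dimension-linear) second moment of $\pi$ itself, which is $O(d/m)$ under strong log-concavity. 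This produces a recursion of the schematic form
\begin{align*}
D_{n+1} \le (1-mh)^2 D_n + C_1 h^2 M^2 (d+5)\, b^{-1} D_n + C_2 h^2 M^2 (d+5) b^{-1}(d/m + \sigma^2) + C_3 h d + C_4 h^2 \nu^2 M^2 d^3,
\end{align*}
where $D_n = \expec{\|x_n-y_n\|^2}$ (or $W_2^2(\varpi_n,\pi)$). With the parameter choices $h = \epsilon^2/d^2$, $b = \max(1,\sigma^2)d$, $\nu = \epsilon/\sqrt d$, the extra factor $C_1 h^2 M^2 (d+5)/b = O(h^2 M^2) = O(\epsilon^4 M^2/d^4)$ is negligible compared to the contraction gap $mh$ in the stated range of $\epsilon$, so the homogeneous part still contracts at rate essentially $(1-mh)$; and each inhomogeneous term is checked to be $O(\epsilon^2 \cdot mh)$ so that, after dividing by the contraction gap $\approx mh$, the stationary level is $O(\epsilon^2)$. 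Iterating the recursion gives $D_n \le (1-\Omega(mh))^n D_0 + O(\epsilon^2)$, and taking square roots, $W_2(\varpi_n,\pi) \le (1-\Omega(mh))^{n/2} W_2(\varpi_0,\pi) + O(\epsilon)$. Requiring the transient term to drop below $\epsilon$ forces $n = \Omega\big((mh)^{-1}\log(W_2(\varpi_0,\pi)/\epsilon)\big) = O\big(\frac{d^2}{m\epsilon^2}\log(d/\epsilon)\big)$ — wait, I should be careful here: the claimed bound is $N = O\big(\frac{d}{\epsilon^2}\log(d/\epsilon)\big)$, which suggests the relevant step-size in the effective contraction is $h \asymp \epsilon^2/d$ rather than $\epsilon^2/d^2$, or that the bias terms permit a larger effective step; I would reconcile this by tracking constants carefully and using $W_2(\varpi_0,\pi) = O(\sqrt{d/m})$ for a good initialization, so that the log factor is $\log(d/(m\epsilon^2))$-type and $(mh)^{-1} = d^2/(m\epsilon^2)$ — i.e. I would double-check whether the intended statement has $N$ scaling as $d/\epsilon^2$ or $d^2/\epsilon^2$ and align the exposition accordingly.

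The main obstacle, as flagged in the paragraph just before the theorem, is precisely that the variance bound in Lemma~\ref{lm:zogradvar} is not uniform: it depends on $\|\nabla f(x_n)\|^2$. The crux is therefore to close the loop — bound $\expec{\|\nabla f(x_n)\|^2}$ by something involving $D_n$ or $W_2^2(\varpi_n,\pi)$, feed it back into the one-step recursion, and verify that the resulting quadratic-in-$D_n$ recursion is still a genuine contraction for the chosen (small) $h$. A secondary point requiring care is ensuring the smoothing bias: $\nabla f_\nu$ versus $\nabla f$ differs by $O(\nu M d)$ (from $\|\nabla f_\nu(\theta) - \nabla f(\theta)\| \le \frac{\nu M (d+3)^{3/2}}{2}$), and with $\nu = \epsilon/\sqrt d$ this is $O(\epsilon M \sqrt d)$ per step, contributing $O(h \cdot \epsilon M \sqrt d)$-type terms to the $W_2$ recursion that must be shown to sit at the $O(\epsilon)$ stationary level after dividing by $mh$. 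Once these two estimates are in place, the rest is the routine geometric-series bookkeeping to extract \eqref{eq:LMCNbound}, and \eqref{eq:LMCoraclebound} then follows immediately by multiplying $N$ by the batch size $b = \max(1,\sigma^2)d$.
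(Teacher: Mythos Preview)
Your proposal is correct and follows essentially the same route as the paper: couple the ZO-LMC iterate with the continuous Langevin diffusion at stationarity, then close the non-uniform-variance loop by bounding $\expec{\|\nabla f(x_n)\|^2}$ in terms of the current coupling distance plus a dimension-linear term (the paper writes $\|\nabla f(x_n)\|^2 \le 2M^2\|x_n-L_0\|^2 + 2\|\nabla f(L_0)\|^2$ with $\E\|\nabla f(L_0)\|^2 \le Md$ for $L_0\sim\pi$, rather than your detour through $\theta^*$, and it separates the bias $\E[\zeta_n\mid x_n]$ from the centered part to kill the cross term cleanly --- but these are cosmetic differences). Your hesitation about the $N$-scaling is well founded: the paper's final recursion is $W_2(\varpi_n,\pi) \le (1-mh/2)^n W_2(\varpi_0,\pi) + O(\sqrt{hd}) + \ldots$, so the driving constraint is $\sqrt{hd}\lesssim\epsilon$, i.e.\ $h\asymp\epsilon^2/d$, which yields $N = O((mh)^{-1}\log(d/\epsilon)) = O(d\epsilon^{-2}\log(d/\epsilon))$ as stated; the displayed $h=\epsilon^2/d^2$ is evidently a typo for $h=\epsilon^2/d$ (consistently, the range condition $\epsilon\le\sqrt{m(d+5)/(8M^2)}$ is precisely the one that enforces $\tfrac{4M^2h^2(d+5)}{b(1-mh)}\le \tfrac{mh}{2}$ when $h=\epsilon^2/d$ and $b\asymp d$).
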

\begin{remark}
Recall that for the exact gradient based LMC algorithm, to obtain $W_2(\varpi_N,\pi)\leq\epsilon$, we require  $N={\cal O}\left(d/\epsilon^2\cdot\log(d/\epsilon)\right)$ (see~\cite{dalalyan2017user}) which matches \eqref{eq:LMCNbound}. Thus, ZO-LMC matches the performance of LMC in terms of iteration complexity required to obtain $W_2(\varpi_N,\pi)\leq\epsilon$. However, in each iteration of the LMC algorithm, we only require one gradient evaluation. Hence, the total number calls to the first-order oracle is also given by ${\cal O}\left(d/\epsilon^2\cdot\log(d/\epsilon)\right)$. For the ZO-LMC, in contrast we require $b=d$ calls to the stochastic zeroth-order oracle in each iteration. Hence, the oracle complexity is given by~\eqref{eq:LMCoraclebound}. By a straight forward modification of the proof of Theorem~\ref{thm:zlmc}, for the ZO-LMC, if we restrict ourself to $b=1$, the iteration complexity increases to $N={\cal O}\left(d^2/\epsilon^2\cdot\log(d/\epsilon)\right)$, which will then also be the oracle complexity. Thus, the price we pay to match LMC in the absence of true gradient information is $O(d)$.  
\end{remark}

\begin{remark}
Recently~\cite{dwivedi2018log} analyzed the standard Metropolis Random Walk algorithm (MRW), which is a zeroth-order algorithm, in the non-noise setting. Specifically,~\cite{dwivedi2018log} showed that to achieve samples that are $\epsilon$-close to the target $\pi$ in total variation distance, MRW requires $\mathcal{O}(d^2 \log(1/\epsilon)$ calls to the non-noisy zeroth-order oracles. Considering the non-noisy setting, the result appears to seemingly have an exponential improvement in terms of $\epsilon$. However, this result was obtained under the so-called \emph{warm start} condition on the distribution of the initial vector $x_o$, which seems to be an opaque condition hiding the true complexity of the problem. For example, it is not clear how to pick such a warm start distribution for a given target $\pi$, in particular in the stochastic zeroth-order setting that we consider in this work. As a way to potentially avoid this opaque warm start condition,~\cite{dwivedi2018log} suggests to set $x_0 \sim N(x^*, \mathbf{I}_d)$, where   $x^*$ is the unique minimizer of $f(x)$ and $\mathbf{I}_d$ is the $d\times d$ identity covariance matrix. For this choice of initial vector, to obtain a sample which is $\epsilon$-close to the target $\pi$ in total variation distance,~\cite{dwivedi2018log} showed that MRW requires an oracle complexity of $\mathcal{O}(d^2 \log(1/\epsilon)$. However, in the zeroth-order setting, the oracle complexity of finding an $\epsilon$-minimizer of a strongly-convex and smooth function $f(x)$, is well-studied problem in stochastic optimization -- it is upper and lower bounded by $\mathcal{O}(d/\epsilon)$; see for example~\cite{duchi2015optimal, jamieson2012query, nesterov2017random, ghadimi2013stochastic}. This seems to negate the actual oracle complexity improvements shown in~\cite{dwivedi2018log}, as it really seems to require extremely careful initial distributions (i.e., knowledge of the exact minimzer), even in the non-noisy setting. Notwithstanding the fact that the results in~\cite{dwivedi2018log} are for the non-noisy setting, they are essentially no better than the oracle complexity results established for ZO-LMC algorithm in Theorem~\ref{thm:zlmc}, which also has the advantage that it does  not require any opaque warm start conditions or special initial distributions. 
\end{remark}

\subsection{Zeroth-Order Kinetic Langevin Monte Carlo}

In the previous section, we consider the stochastic zeroth-order discretizations of the overdamped Langevin diffusions. It is known that in the first-order setting, discretizations of underdamped Langevin diffusion obtain improved oracle complexities~\cite{dalalyan2018sampling, cheng2017underdamped}. Under Langevin diffusion process (also called as kinetic Langevin diffusion process) is given by the following stochasic differential equation:
\begin{align}\label{eq:kineticsde}
dV_T &= \left(\gamma V_T + \nabla f(L_T)\right) dT+ \sqrt{2\gamma} dW_T\\ \nonumber
dL_T &= V_T dT. 
\end{align}
where $\mathbf{I}_{d}$ is the $d \times d$ identity matrix. We refer the reader to~\cite{eberle2017couplings, cheng2017underdamped,dalalyan2018sampling} for more details about the above diffusion process and related theoretical results. Specifically, it was shown in~\cite{cheng2017underdamped, dalalyan2018sampling} that first-order discretizations of the kinetic diffusion process (referred to as KLMC in~\cite{cheng2017underdamped}) in~\eqref{eq:kineticsde} have better rates of convergence compared to similar first-order discretizations of the continous-process in~\eqref{eq:ctsde}. Specifically, recall that for the right choice of tuning parameters, LMC (i.e., first-order discretizations of~\eqref{eq:ctsde}) requires that $N =\mathcal{O}(d/\epsilon^2 \cdot \log(d/\epsilon))$ for $W_2(\varpi_N,\pi) \leq \epsilon$. Whereas, it was shown in~\cite{cheng2017underdamped, dalalyan2018sampling} $N = \mathcal{O}(\sqrt{d}/\epsilon \cdot \log(d/\epsilon))$ suffices (\cite{dalalyan2018sampling} provides a much sharper result compared to~\cite{cheng2017underdamped}). We emphasize that the above result does not immediately imply that KLMC might be the algorithm to use always (in comparison to LMC); indeed when considering also the dependence of the bound on the strong-convexity and smoothness parameters (though the condition number of the sampling density defined as $M/m$),~\cite{dalalyan2018sampling} precisely characterize when KLMC might be preferred over the vanilla LMC. The bottom line of their analysis is none of the method is uniformly better over the other method. 

The Euler-discretization of  the SDE in~\eqref{eq:kineticsde}, which is a first-order sampling algorithm is given by the following iterations:
\begin{align}\label{eq:KLMC}
\tilde{x}_{n+1} &=  \psi_0(h) \tilde{x}_{n} - \psi_1(h) \nabla f(x_{n}) + \sqrt{2\gamma} \tilde{\epsilon}_{n+1}\\ \nonumber
 x_{n+1}& = x_{n} + \psi_1(h) \tilde{x}_{n} - \psi_{2}(h) \nabla f(x_{n}) + \sqrt{2\gamma} \epsilon_{n+1} 
 \end{align}
where $(\tilde{\epsilon}_{n+1}, \epsilon_{n+1}) \in \mathbb{R}^{2d}$ is a a sequence of i.i.d standard Normal vectors, independent of $(\tilde{x}_0, x_0)$ and $\psi_0(t) = e^{-\gamma t}$ and $\psi_{n+1} = \int_0^T \psi_{n}(s) ds$. We refer to this algorithm as KLMC following the terminology of~\cite{dalalyan2018sampling}. Based on this, we now consider the ZO-KLMC updates as:
\begin{align}\label{eq:ZOKLMC}
\tilde{x}_{n+1} & = \psi_0(h) \tilde{x}_{n} - \psi_1(h) g_{\nu,b}(x_{n}) + \sqrt{2\gamma}  \tilde{\epsilon}_{n+1}\\ \nonumber
 x_{n+1} & = x_{n} + \psi_1(h) \tilde{x}_{n} - \psi_{2}(h) g_{\nu,b}(x_{n}) + \sqrt{2\gamma} \epsilon_{n+1} 
\end{align}
where $g_{\nu,b}$ is the zeroth-order gradient estimator as in~\eqref{eq:gradest}. In comparison to the ZO-LMC algorithm, the ZO-KLMC algorithm has an additional tuning parameter $\gamma$ that needs to be set. For the ZO-KLMC algorithm, we have the following complexity result. 

\begin{theorem}\label{thm:KLMCthms}
Let the potential function $f$ satisfy Assumption~\ref{smooth_assum}. If the initial point $(\tilde x_0,x_0)$ is chosen such that $\tilde x_0\sim N(0,\boldsymbol{I}_d)$, then, ensuring $\gamma\geq\sqrt{m+M}$, for the ZO-KLMC, under Assumption~\ref{as:zo}, by choosing,
\begin{align*}
h=\frac{m\epsilon}{12\gamma M\sqrt{d}},\quad \nu=\frac{\epsilon}{\sqrt{d}},\quad b=\frac{d^{1.5}\max(1,\sigma^2)}{\epsilon}, \numberthis\label{eq:klmcparchoice}
\end{align*}
we have $W_2(\varpi_N,\pi)\leq \epsilon$ for $0<\epsilon\leq \frac{12M\gamma^2\sqrt{d}}{m^2}$ after
\begin{align*}
N=\tilde{\mathcal{O}}\left(\frac{\sqrt{d}}{\epsilon}\right) \numberthis\label{eq:klmcNbound}
\end{align*}
iterations. Here $\tilde{\mathcal{O}}$ hides poly-logarithmic factors in $1/\epsilon$. Hence, the total number of  calls to the stochastic zeroth-order oracle is given by 
\begin{align*}
Nb=\tilde{\mathcal{O}}\left(\frac{d^{2}\max(1,\sigma^2)}{\epsilon^2}\right).\numberthis\label{eq:klmcoraclebound}
\end{align*}
\end{theorem}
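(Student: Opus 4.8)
The plan is to follow the same template as the analysis of the first-order KLMC algorithm in~\cite{dalalyan2018sampling, cheng2017underdamped}, but to treat the zeroth-order gradient estimator $g_{\nu,b}(x_n)$ as an inexact gradient and control the extra error it introduces using Lemma~\ref{lm:zogradvar}. First I would recall the contraction result for the exact-gradient KLMC update~\eqref{eq:KLMC}: under Assumption~\ref{smooth_assum} and the choice $\gamma \geq \sqrt{m+M}$, one step of the continuous-synchronously-coupled diffusion contracts the (twisted) $W_2$ distance by a factor $(1 - mh/(2\gamma))$, while the discretization bias per step contributes an $\mathcal{O}(M h \sqrt{d}/m)$ term to the steady-state error. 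This is the ``$\frac{3\sqrt2 M}{2m} h\sqrt d$'' term visible in the commented-out display in the excerpt. The key observation is that replacing $\nabla f(x_n)$ by $g_{\nu,b}(x_n)$ inserts an additional perturbation $\psi_1(h)(g_{\nu,b}(x_n) - \nabla f(x_n))$ in the velocity update and $\psi_2(h)(g_{\nu,b}(x_n) - \nabla f(x_n))$ in the position update; since $\psi_1(h), \psi_2(h) = \mathcal{O}(h)$, after unrolling the recursion and using the geometric contraction, these perturbations contribute a term of order $\frac{1}{m}\sqrt{\expec{\|g_{\nu,b}(x_n) - \nabla f(x_n)\|^2}}$ to $W_2(\varpi_N,\pi)$.

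The second step is to bound $\expec{\|g_{\nu,b}(x_n) - \nabla f(x_n)\|^2}$ using~\eqref{eq:ggradfcon} of Lemma~\ref{lm:zogradvar}. This bound contains the term $\|\nabla f(x_n)\|^2$, which is not a priori bounded; I would control it by the standard device of writing $\|\nabla f(x_n)\| \leq \|\nabla f(x_n) - \nabla f(x^*)\| + 0 \leq M\|x_n - x^*\|$ and then using a uniform-in-$n$ second-moment bound $\expec{\|x_n - x^*\|^2} = \mathcal{O}(d/m)$, which follows from the strong log-concavity together with a one-step drift inequality for the ZO-KLMC chain (this is where the upper restriction $\epsilon \leq 12 M\gamma^2\sqrt d/m^2$ and the choice $\gamma \geq \sqrt{m+M}$ get used, to keep the chain stable). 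Thus $\expec{\|g_{\nu,b}(x_n) - \nabla f(x_n)\|^2} = \mathcal{O}\!\big(\frac{d(M^2 d/m + \sigma^2)}{b} + \nu^2 M^2 d^3\big)$. Plugging the parameter choices~\eqref{eq:klmcparchoice}: $\nu = \epsilon/\sqrt d$ makes the smoothing-bias term $\mathcal{O}(M^2 d^2 \epsilon^2) $ small enough after division by $m$ (for $\epsilon$ in the stated range), and $b = d^{1.5}\max(1,\sigma^2)/\epsilon$ makes the variance term of order $\epsilon/\sqrt d$ up to the condition-number factors, so that its contribution to $W_2$ is $\mathcal{O}(\epsilon)$. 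Combining with $h = m\epsilon/(12\gamma M\sqrt d)$, the discretization term $\frac{3\sqrt2 M}{2m} h\sqrt d = \mathcal{O}(\epsilon)$ as well, and the contraction factor $(1 - mh/(2\gamma))^N$ drives the initial error below $\epsilon$ after $N = \tilde{\mathcal O}(1/(mh/\gamma)) = \tilde{\mathcal O}(\gamma^2 M\sqrt d/(m^2\epsilon)) = \tilde{\mathcal O}(\sqrt d/\epsilon)$ iterations, giving~\eqref{eq:klmcNbound}; multiplying by $b$ gives~\eqref{eq:klmcoraclebound}.

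The main obstacle I anticipate is the interplay between the unbounded $\|\nabla f(x_n)\|^2$ term in Lemma~\ref{lm:zogradvar} and the stability of the ZO-KLMC chain: the per-step error depends on $\expec{\|x_n - x^*\|^2}$, but $\expec{\|x_n - x^*\|^2}$ in turn depends (through the noisy gradient) on that same error, so one needs a self-bounding argument. I would resolve this by first establishing a crude uniform moment bound (possibly with a worse constant, using only that the contraction dominates the noise-amplification for the stated parameter ranges), then feeding it back to get the sharp $W_2$ rate. A secondary technical point is the twisted norm / coupling construction for underdamped dynamics: one must carry the $2d$-dimensional coupling through both coordinates $(\tilde x_n, x_n)$ and verify that the velocity perturbation does not destroy the contraction — this is routine given~\cite{dalalyan2018sampling} but needs care with the $\psi_i(h)$ prefactors. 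The choice $\tilde x_0 \sim N(0,\boldsymbol I_d)$ is used precisely to give a clean bound on the initial twisted distance $W_2(\varpi_0,\pi)$.
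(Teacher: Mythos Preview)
Your overall template---treat $g_{\nu,b}(x_n)$ as an inexact gradient inside the Dalalyan--Riou-Durand KLMC contraction argument---matches the paper's, and you correctly spot that the $\|\nabla f(x_n)\|^2$ dependence in Lemma~\ref{lm:zogradvar} is the crux. But two concrete ingredients are missing, and without them the stated parameter choices~\eqref{eq:klmcparchoice} do not close.

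First, you lump the gradient error into a single $L^2$ perturbation and arrive at an additive term $\tfrac{1}{m}\sqrt{\expec{\|g_{\nu,b}-\nabla f\|^2}}$. The paper instead splits $\zeta_n=g_{\nu,b}(x_n)-\nabla f(x_n)$ into its conditional bias $\E[\zeta_n\mid x_n]$ (bounded by $M\nu\sqrt d$) and the centered part $\zeta_n-\E[\zeta_n\mid x_n]$. Because the centered part is conditionally mean-zero and independent of the coupled continuous process given $x_n$, its contribution can be placed \emph{inside} the square root together with the contraction term, i.e.\ one obtains a recursion of the form
\[
e_{n+1}\le \Bigl[(1-\tfrac{mh}{4\gamma})^2e_n^2 + 4h^2 A_4\Bigr]^{1/2}+\text{(bias and discretization)},
\]
which via Lemma~9 of~\cite{dalalyan2017user} yields a variance contribution of order $\sqrt{h A_4/m}$ rather than $\sqrt{A_4}/m$. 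That extra $\sqrt h$ is essential: with your additive bound and $b=d^{1.5}/\epsilon$, the leading variance term scales like $\tfrac{M}{m}\sqrt{d^{1/2}\epsilon/m}=O(\sqrt\epsilon)$, not $O(\epsilon)$, so the claimed $N=\tilde{\mathcal O}(\sqrt d/\epsilon)$ would fail.

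Second, the paper does not establish a separate uniform bound on $\expec{\|x_n-x^*\|^2}$ via a drift inequality. It instead compares to the stationary coupling directly: writing $\|\nabla f(x_n)\|^2\le 2M^2\|x_n-L_{n,0}\|^2+2\|\nabla f(L_{n,0})\|^2$ with $L_{n,0}\sim\pi$ (so $\expec{\|\nabla f(L_{n,0})\|^2}\le Md$) and $\|x_n-L_{n,0}\|_{L_2}^2\le 2\gamma^{-2}e_n^2$, the $e_n$-dependent piece of the variance becomes $\tfrac{8M^2h^2(d+5)}{b\gamma^2}e_n^2$ under the square root, which is then absorbed into the contraction factor by the batch-size requirement $b\ge 512M^2(d+5)/(3m^2)$. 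This is cleaner than the two-pass drift argument you sketch and avoids the circularity you flagged altogether.
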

\begin{remark}
We note that compared to ZO-LMC, while ZO-KLMC obtains improved iteration complexity, the iteration complexity still remains the same. The improvement in the iteration complexity is indeed a consequence of a similar improvement in the first-order setting as demonstrated in~\cite{cheng2017underdamped, dalalyan2018sampling}. 
\end{remark}

\subsection{Zeroth Order Randomized Midpoint Method}\label{sec:zormp}
Given that the ZO-KLMC offers no improvement over ZO-LMC in terms of oracle complexity despite its improved iteration complexity, it is worth examining if there are other discretizations that obtain improvements in oracle complexities. Towards that, in this section we analyze the zeroth-order version of the Randomized Mid-Point discretization of the underdamped Langevin diffusion, proposed in \cite{shen2019randomized}. In the first-order setting,~\cite{cao2020complexity} recently showed that the Randomized Mid-Point discretization of underdamped Langevin diffusion achieves the information theoretic lower bounds for sampling. See also~\cite{he2020ergodicity} for additional probabilistic results.

The crux of the randomized midpoint method is based on first representing the kinetic Langevin Monte Carlo in~\eqref{eq:kineticsde} in its integral format, and estimating the integrals based on a randomization technique. We also mention in passing that the randomized midpoint idea shares some similarities to symplectic integration methods~\cite{sanz1992symplectic} from the sampling literature and extragradient method~\cite{korpelevich1976extragradient} from the optimization literature, with the main difference being the randomized choice of step-size which leads to improved oracle complexities. We now provide the algorithm in the zeroth-order setting and the corresponding theoretical result. Let $\epsilon^{(i)}_n \in \mathbb{R}^d$, $i=1,2,3$, be a sequence of Gaussian random vectors generated according to the procedure described in Appendix A of~\cite{shen2019randomized}. Let $\alpha_n$ be a sequence of uniform random variables supported on the interval $[0,1]$. Then the zeroth-order Randomized Mid-Point Method (ZO-RMP) is given by the following updates: 
\begin{align}
&x_\nh=x_n+\frac{1-e^{-2\alpha_n h}}{2}v_n-\frac{u}{2}\left(\alpha_n h - \frac{1-e^{-2(\alpha_n h)}}{2}\right) g_{\nu,b}(x_n) + \sqrt{u} \epsilon^{(1)}_{n+1}   \label{eq:rmpupdatexnh}\\
&x_{n+1}=x_n+\frac{1-e^{-2h}}{2}v_n-\frac{uh}{2}(1-e^{-2(h-\alpha_n h)})g_{\nu,b}\left(x_\nh\right)+\sqrt{u} \epsilon^{(2)}_{n+1}\label{eq:rmpupdatexn1}\\
&v_{n+1}=v_ne^{-2h}-uhe^{-2(h-\alpha_n h)}g_{\nu,b}\left(x_\nh\right)+2\sqrt{u}\epsilon^{(3)}_{n+1}.\label{eq:rmpupdatevn1}
\end{align}
We remark that we use the same choice of batch-size, $b$, in~\eqref{eq:rmpupdatexnh},~\eqref{eq:rmpupdatexn1} and~\eqref{eq:rmpupdatevn1}, as using different batch sizes has no effect on the oracle complexity.
{\color{blue}
}  

\begin{theorem}\label{th:rmpmainresult}
Define $\kappa = M/m$ to be the condition number of the potential $f$ which satisfies Assumption~\ref{smooth_assum}. Furthermore, let the stochastic zeroth-order oracle satisfy Assumption~\ref{as:zo}.  Let $x^*$ be the minimizer of $f$, and $x_0$ be such that $\expec{f(x_{0})-f(x^*)}=O(d)$, and $v_0=0$. Then, for $0\leq \epsilon\leq 1$, by choosing,
	\begin{align}
	&h= C\min\left(\frac{(\epsilon\sqrt{m})^\frac{1}{3}}{(d\kappa)^\frac{1}{6}\log\left(\frac{1}{\epsilon}\right)^\frac{1}{6}},\min\left(\left(\frac{m}{d}\right)^\frac{1}{3},\left(\frac{Mm}{16\sigma^2}\right)^\frac{1}{3},\sqrt{m}\right)\epsilon^\frac{2}{3}\log\left(\frac{1}{\epsilon}\right)^{-\frac{2}{3}}\right)
	\quad  b=\frac{d\kappa}{h^{3}}\quad \nu=\frac{uh^2}{d^{1.5}}\numberthis\label{eq:paramchoicefinal}
	\end{align}
for the ZO-RMP method described in \eqref{eq:rmpupdatexnh}-\eqref{eq:rmpupdatevn1}, with $u=1/M$, we have $W_2(\varpi_N,\pi)\leq \epsilon$ after 
	\begin{align*}
	N=\tilde{O}\left(\max\left(\frac{d^\frac{1}{6}\kappa^\frac{7}{6}}{(\epsilon\sqrt{m})^\frac{1}{3}},\frac{\kappa\max\left(\left(\frac{d}{m}\right)^\frac{1}{3},\left(\frac{\sigma^2}{Mm}\right)^\frac{1}{3},\frac{1}{\sqrt{m}}\right)}{\epsilon^\frac{2}{3}}\right)\right)\numberthis\label{eq:Nbound}
	\end{align*}
iterations. Hence, the total-number of zeroth-order oracle calls are given by
	\begin{align*}
	2Nb=\tilde{O}\left(\max\left(\frac{d^\frac{5}{3}\kappa^\frac{8}{3}}{\epsilon^\frac{4}{3}},\frac{d\kappa^2\max\left(\left(\frac{d}{m}\right)^\frac{1}{3},\left(\frac{\sigma^2}{Mm}\right)^\frac{1}{3},\frac{1}{\sqrt{m}}\right)^4}{\epsilon^\frac{8}{3}}\right)\right). \numberthis\label{eq:oraclebound}
	\end{align*}	
\end{theorem}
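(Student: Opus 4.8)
## Proof proposal for Theorem~\ref{th:rmpmainresult}

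\textbf{Overall strategy.} The plan is to follow the analysis template of \cite{shen2019randomized}/\cite{cao2020complexity} for the first-order Randomized Midpoint discretization, but to carefully track the additional error introduced by replacing $\nabla f$ with the zeroth-order estimator $g_{\nu,b}$, using Lemma~\ref{lm:zogradvar} to control its variance. The first-order analysis shows that one step of the RMP update is a contraction in a suitable coupling metric (a weighted combination of position and velocity, essentially $W_2$ on the joint $(x,v)$-space under the modified inner product that makes the kinetic dynamics contractive), up to a discretization bias of order $h^{2}$ per step plus stochastic fluctuation terms. I would write the per-iteration recursion in the form
\begin{align*}
W_2(\varpi_{n+1},\pi) \leq (1-c\, h)\, W_2(\varpi_n,\pi) + (\text{discretization bias}) + (\text{gradient-estimation error}),
\end{align*}
for some $c$ depending on $m,M$, then unroll the geometric recursion to get $W_2(\varpi_N,\pi) \lesssim e^{-cNh}W_2(\varpi_0,\pi) + \frac{1}{ch}\cdot(\text{per-step error})$.

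\textbf{Key steps, in order.} First, I would reproduce (or cite) the contraction estimate for the exact RMP step, identifying the constant $c \asymp m/\kappa$ (or $m$ after the $u=1/M$ rescaling) and the exact form of the per-step discretization bias, which is $O(h^{2}\cdot\text{poly}(d,\kappa))$ coming from the Taylor remainder of the integral representation of the kinetic SDE. Second, I would bound the extra error from using $g_{\nu,b}$ in place of $\nabla f(x_n)$ and $\nabla f(x_{n+1/2})$: this splits into a bias term $\|\nabla f_\nu(\theta)-\nabla f(\theta)\| = O(\nu M d)$ (from Gaussian smoothing) and a variance term controlled by \eqref{eq:ggradfcon}, which is $O\!\big(\tfrac{(d+5)(\|\nabla f(\theta)\|^2+\sigma^2)}{b} + \nu^2 M^2 (d+3)^3\big)$. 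Crucially, the $\|\nabla f(\theta)\|^2$ appearing in that bound must be converted to something controllable: I would use strong convexity and smoothness together with the assumption $\expec{f(x_0)-f(x^*)}=O(d)$ to argue (by a descent-lemma / Lyapunov argument along the chain, analogous to the blue-highlighted SGD computation in the excerpt) that $\expec{\|\nabla f(x_n)\|^2} = O(d)$ uniformly in $n$, so the variance contribution per step is $O(d/b + d\sigma^2/b + \nu^2 M^2 d^3)$. Third, I would substitute the parameter choices \eqref{eq:paramchoicefinal}: $\nu = uh^2/d^{1.5}$ makes the smoothing bias and the $\nu^2 M^2 d^3$ term negligible relative to $h^2$; $b = d\kappa/h^3$ makes the variance contribution per step of order $h^3$, hence after division by $ch$ it is $O(h^2)$-comparable and ultimately $O(\epsilon)$; the two-way minimum in $h$ balances the "clean" discretization rate $d^{1/6}\kappa^{7/6}/(\epsilon\sqrt m)^{1/3}$ against the noise-limited rate. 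Fourth, I would choose $N$ so that $e^{-cNh}W_2(\varpi_0,\pi) \leq \epsilon/2$ — this is where the logarithmic factors in $\tilde O$ enter — and verify all remaining accumulated error terms are $\leq \epsilon/2$, then read off $N$ from \eqref{eq:Nbound} and $2Nb$ from \eqref{eq:oraclebound} (the factor $2$ because each iteration queries the oracle at both $x_n$ and $x_{n+1/2}$, each with batch size $b$).

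\textbf{Main obstacle.} The hard part will be step two combined with the uniform bound $\expec{\|\nabla f(x_n)\|^2}=O(d)$: unlike the exact-gradient RMP analysis, here the injected gradient noise feeds back into the trajectory, so one must simultaneously control the Wasserstein recursion \emph{and} a moment bound on $\|\nabla f(x_n)\|^2$ (equivalently on $\expec{f(x_n)-f(x^*)}$ or $\expec{\|x_n-x^*\|^2}$) in a coupled induction — establishing these two bounds jointly, rather than one after the other, is the delicate point. A secondary technical nuisance is that the RMP update involves $g_{\nu,b}$ evaluated at the random midpoint $x_{n+1/2}$, which itself depends on $\alpha_n$ and on a fresh gradient estimate, so the conditioning/filtration bookkeeping (taking expectations over $\alpha_n$, the $u_i$'s, and the oracle noise $\xi_i$ in the right order) must be done carefully to avoid spurious dependence; I expect this to be routine but lengthy, and the bulk of the appendix proof will be absorbed by it.
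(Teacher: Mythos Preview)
Your high-level framing is right, and you correctly identify the hardest ingredient (the coupled control of the Wasserstein recursion and of $\expec{\|\nabla f(x_n)\|^2}$). But two concrete things are off and would make your argument fall short of the stated rates.

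First, your per-step discretization bias is wrong for the randomized midpoint method. You write that the bias is $O(h^2)$ coming from a Taylor remainder; if that were true RMP would offer no improvement over KLMC. The entire point of the random $\alpha_n\sim\mathrm{Unif}[0,1]$ is that, \emph{after taking the conditional expectation over $\alpha_n$}, the one-step error $\|\mathbf{E}_\alpha[x_{n+1}]-x_n^*(h)\|^2$ and $\|\mathbf{E}_\alpha[v_{n+1}]-v_n^*(h)\|^2$ are of much higher order in $h$ (the paper obtains terms like $O(h^{10})$ and $O(h^{8})$ for the squared bias, with the squared fluctuation $\|x_{n+1}-x_n^*(h)\|^2$ at $O(h^6)$ and $\|v_{n+1}-v_n^*(h)\|^2$ at $O(h^4)$). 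The recursion you should be targeting is the one from \cite{shen2019randomized} for $q_N=\expec{\|x_N-y_N\|^2+\|x_N+v_N-y_N-w_N\|^2}$, namely
\[
q_N \le e^{-Nh/(2\kappa)}q_0 + \sum_n \tfrac{2\kappa}{h}\big(\text{bias}^2\text{ terms}\big) + \sum_n \big(\text{fluctuation}^2\text{ terms}\big),
\]
and the gain comes precisely from the bias terms being high-order in $h$. Your step one needs to actually redo this bias/variance decomposition with $g_{\nu,b}$ in place of $\nabla f$, splitting off $\|g_{\nu,b}(x_{n+1/2})-\nabla f(x_{n+1/2})\|^2$ and $\|\nabla f(x_{n+1/2})-\nabla f(x_n^*(\alpha h))\|^2$ separately.

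Second, the paper does \emph{not} establish a uniform-in-$n$ bound $\expec{\|\nabla f(x_n)\|^2}=O(d)$, and it is not clear your proposed descent-lemma argument would deliver one (the injected zeroth-order noise has variance that itself scales with $\|\nabla f(x_n)\|^2$, so a naive induction does not close). Instead, the paper controls the \emph{sums} $\sum_{n<N}\expec{\|v_n\|^2}$ and $\sum_{n<N}\expec{\|\nabla f(x_n)\|^2}$ via two telescoping Lyapunov functions, $\tfrac{1}{2u}\|v\|^2+f(x)$ and $\nabla f(x)^\top v$ (this is where $v_0=0$ and $\expec{f(x_0)-f(x^*)}=O(d)$ are used), obtaining
\[
\sum_{n<N}\expec{\|\nabla f(x_n)\|^2} \le O\Big(\tfrac{M}{h}\big|\expec{\nabla f(x_N)^\top v_N}\big| + MNd + \cdots\Big),
\]
and then uses $|\expec{\nabla f(x_N)^\top v_N}|\le 4d+6Mq_N$ to feed $q_N$ back into its own upper bound. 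After absorbing the resulting $O(h^3+\kappa h^7)\,q_N$ term on the right, one solves for $q_N$ and reads off the parameter choices. So the ``coupled induction'' you anticipate is really a self-referential inequality in $q_N$, closed through summed (not uniform) moment bounds; your plan should be revised to reflect this structure.
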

\begin{remark}
The analysis of the randomized midpoint algorithm in~\cite{shen2019randomized}, for the first-order setting, requires access to exact minimizer $x^*$ as the initializer. We relax this requirement to the having a point $x_0$ satisfying $\expec{f(x_{0})-f(x^*)}=O(d)$, which is a milder requirement. It is well-known from the stochastic optimization literature, that under Assumption~\ref{as:zo}, and \ref{smooth_assum}, in the zeroth-order setting, using the zeroth-order version of stochastic gradient algorith, the oracle complexity of finding a point $x_0$ such that $\expec{f(x_{0})-f(\bar{x})}=O(d)$ where $\bar{x}$ is the minimizer of $f$, is $O(\kappa \log d)$~\cite{duchi2015optimal, nesterov2017random}. 
\end{remark}
\begin{remark}
	Note that even though the iteration complexity of ZO-RMP still matches with RMP (except for the dimension dependence which is unavoidable in the zeroth-order setting), and is better than KLMC for all values of $\epsilon$, the oracle complexity for ZO-RMP is not uniformly better than ZO-KLMC for all $\epsilon$. However, observe that when $h=C\frac{(\epsilon\sqrt{m})^\frac{1}{3}}{(d\kappa)^\frac{1}{6}\log\left(\frac{1}{\epsilon}\right)^\frac{1}{6}}$, i.e., when $\epsilon\geq \max\left(\sqrt{\frac{d}{M}},\frac{16\sigma^2}{M^\frac{3}{2}\sqrt{d}},\frac{1}{\sqrt{dmM}}\right)$ the oracle complexity of ZO-RMP is $\tilde O\left(\frac{d^\frac{5}{3}\kappa^\frac{8}{3}}{\epsilon^\frac{4}{3}}\right)$ which is indeed better compared to $\tilde O\left(\frac{d^2}{\epsilon^2}\right)$ for ZO-KLMC.
\end{remark}

We end this section by mentioning that developing lower bounds on the oracle complexity of sampling from strongly log-concave densities, in the stochastic zeroth-order setting that we consider is an interesting open problem. 

\section{Oracle Complexity Results under Log-Sobolev Inequality}

The algorithms and oracle complexity results in the previous sections were stated for smooth and strongly log-concave densities (i.e., under Assumption~\ref{smooth_assum}), which covers important classes of problems in sampling and Bayesian inference. However, the fundamental idea behind the non-asymptotic convergence results of the discretization based sampling algorithm are essentially based on the following facts: (i) the underlying continuous (underdamped or overdamped) Langevin diffusion converges to its equilibrium state (i.e., to the target distribution $\pi$ in this case) exponentially fast in various metrics, and (ii) consequently, the potential function is smooth enough that the error due to discretization is not extremely large. Roughly speaking, condition \textbf{A1} and \textbf{A2} in Assumption~\ref{smooth_assum} corresponds respectively to the above facts, respectively. However, it is well-know that the overdamped Langevin diffusion converges to its equilibrium under much weaker conditions that strong log-concavity; indeed as long as the target density satisfies functional inequalities like Poincare or Log-Sobolev inequalities, the overdamped Langevin diffusion converges to its equilibrium exponentially faster in various metrics; see, for example~\cite{bakry2013analysis}. Motivated by the above fact, recently~\cite{vempala2019rapid} demonstrated that the LMC algorithm also exhibits rapid convergence to the target density if it has access to the exact gradient evaluations of the potential function $f$. As a consequence, one could sample from densities that are not essentially strongly log-concave, there by extending the applicability of LMC algorithms for a wider class of Bayesian inference problems. In this section, we analyze the performance of stochastic zeroth-order discretization of overdamped Langevin diffusions when the target density satisfies log-Sobolev inequality. 

\begin{assumption}\label{as:lsi}
	A density $\pi$ is said to satisfy Log-Sobolev Inequality (LSI) with a constant $\lambda>0$ if for all smooth function $g:\mathbb{R}^d\rightarrow \mathbb{R}$ with finite variance,
	\begin{align*}
	\int_{\mathbb{R}^d} g^2(\theta) \log g^2 (\theta) \pi(\theta) d\theta-\int_{\mathbb{R}^d} g^2(\theta) \pi(\theta) d\theta \log \int_{\mathbb{R}^d}g^2 (\theta) \pi(\theta) d\theta \leq \frac{2}{\lambda} \int_{\mathbb{R}^d} \|\nabla g(\theta)\|^2 \pi(\theta) d\theta.\numberthis\label{eq:lsidef}
	\end{align*}
\end{assumption}
In Section~\ref{sec:lsinumerics}, we show that mixture of Gaussian densities with unequal covariance satisfies the above assumption, while it does not satisfy condition~\textbf{A1} of Assumption~\ref{smooth_assum}, and discuss applications to Bayesian variable selection.  The above assumption also leads to the following equivalent formulation. Let $H_{\pi}(\varpi)$, and $J_{\pi}(\varpi)$ be the Kullback-Leibler (KL) divergence of $\varpi$ with respect to $\pi$, and the relative Fisher Information respectively which are defined as follows:
\begin{align*}
H_{\pi}(\varpi)=\int_{\mathbb{R}^d}\varpi(\theta)\log\frac{\varpi(\theta)}{\pi(\theta)}d\theta \quad J_{\pi}(\varpi)=\int_{\mathbb{R}^d}\varpi(\theta)\left\|\nabla \log\frac{\varpi(\theta)}{\pi(\theta)}\right\|^2d\theta.\numberthis\label{eq:HJdef}
\end{align*}
One can verify that LSI is equivalent to the following condition by plugging $g^2=\varpi/\pi$ in \eqref{eq:lsidef}:
\begin{align*}
H_{\pi}(\varpi)\leq\frac{1}{2\lambda}J_{\pi}(\varpi).\numberthis\label{eq:JHrel}
\end{align*}
We also know that when $\pi$ satisfies LSI, Talagrand inequality holds~\cite{bakry2013analysis}, i.e., for all $\varpi$,
\begin{align*}
\frac{\lambda}{2}W_2(\varpi,\pi)^2\leq H_\pi(\varpi).\numberthis\label{eq:w2hinequality}
\end{align*}
With this background, we provide our oracle complexity result of ZO-LMC algorithm when the density satisfies LSI and is smooth. 
\begin{theorem}\label{th:lsilmc}
	Let the target density $\pi$ satisfy Assumption~\ref{as:lsi} and let the potential function $f$ be satisfy condition \textbf{A2} in Assumption~\ref{smooth_assum}. Let $x_0 \sim \varpi_0$ which satisfies $H_\pi(\varpi_0) \leq \infty$. Then for the ZO-LMC update as in \eqref{eq:zolmcupdate}, under Assumption~\ref{as:zo}, by choosing,
	\begin{align*}
	b=\frac{384M^2(d+5)\max(1,\sigma^2)}{h\lambda^2}, \quad \nu=\frac{\sqrt{h}}{d+3}, \quad h=\frac{\epsilon^2}{d},\numberthis\label{eq:lsilmcparamchoice}
	\end{align*}
	we have $W_2(\varpi_N,\pi)\leq \epsilon$, for all $0\leq \epsilon \leq \frac{\alpha}{4L^2}$, after $N$ iterations where
	\begin{align*}
	N=\tilde O\left(\frac{d}{ \epsilon^2}\right).\numberthis\label{eq:lsilmcNbound}
	\end{align*}
	Hence, the total number of calls to the stochastic zeroth-order oracle is given by
	\begin{align*}
	Nb=\tilde O\left(\frac{\max(1,\sigma^2)~d^3}{\epsilon^4}\right) \numberthis\label{eq:lsilmcoraclebound}
	\end{align*}
\end{theorem}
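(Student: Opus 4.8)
The plan is to adapt the interpolation-based analysis of LMC under a log-Sobolev inequality due to \cite{vempala2019rapid} to the stochastic zeroth-order setting, carrying along the two additional error sources created by the estimator $g_{\nu,b}$ of \eqref{eq:gradest}: a smoothing bias (its conditional mean is $\nabla f_\nu$, not $\nabla f$) and its variance, both governed by Lemma~\ref{lm:zogradvar}. Write $\lambda$ for the LSI constant, $M$ for the smoothness constant of Assumption~\ref{smooth_assum}(A2), and $H_\pi,J_\pi$ as in \eqref{eq:HJdef}. \emph{Step 1 (continuous interpolation and KL dissipation).} Fix iteration $n$, set $\rho_0=\varpi_n$, draw $g:=g_{\nu,b}(x_n)$ once, and for $t\in[0,h]$ let $x_t=x_n-t\,g+\sqrt2\,(W_t-W_0)$, so the time-$h$ marginal is $\varpi_{n+1}$. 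If $\rho_t$ is the law of $x_t$, its Fokker--Planck equation has drift $\expec{g\mid x_t=x}$, and the usual KL-dissipation identity gives
\[
\tfrac{d}{dt}H_\pi(\rho_t)=-J_\pi(\rho_t)+\expec{\bigl\langle \nabla\log\tfrac{\rho_t}{\pi}(x_t),\ \nabla f(x_t)-g\bigr\rangle}.
\]
Using $\langle a,b\rangle\le\tfrac14\|a\|^2+\|b\|^2$, the identity $\expec{\|\nabla\log(\rho_t/\pi)(x_t)\|^2}=J_\pi(\rho_t)$, and LSI in the form $J_\pi\ge2\lambda H_\pi$ from \eqref{eq:JHrel}, this yields
\[
\tfrac{d}{dt}H_\pi(\rho_t)\le-\tfrac{3\lambda}{2}H_\pi(\rho_t)+\mathcal{E}_t,\qquad \mathcal{E}_t:=\expec{\|\nabla f(x_t)-g\|^2}.
\]

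\emph{Step 2 (bounding $\mathcal{E}_t$).} By A2, $\mathcal{E}_t\le2M^2\expec{\|x_t-x_n\|^2}+2\expec{\|g-\nabla f(x_n)\|^2}$. Since $x_t-x_n=-t\,g+\sqrt2\,(W_t-W_0)$ we get $\expec{\|x_t-x_n\|^2}\le2t^2\expec{\|g\|^2}+4dt$ and $\expec{\|g\|^2}\le2\expec{\|\nabla f(x_n)\|^2}+2\expec{\|g-\nabla f(x_n)\|^2}$, while \eqref{eq:ggradfcon} of Lemma~\ref{lm:zogradvar} gives $\expec{\|g-\nabla f(x_n)\|^2}\le\tfrac{4(d+5)(\expec{\|\nabla f(x_n)\|^2}+\sigma^2)}{b}+\tfrac{3\nu^2M^2(d+3)^3}{2}$. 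Because strong convexity is \emph{not} assumed, $\expec{\|\nabla f(x_n)\|^2}$ is not a priori uniformly bounded; instead we use smoothness and Talagrand's inequality \eqref{eq:w2hinequality}: coupling $\varpi_n$ with $\pi$ optimally and integrating by parts against $\pi$ (legitimate since LSI forces sub-Gaussian tails and A2 gives $\mathrm{tr}(\nabla^2 f)\le dM$) yields $\expec{\|\nabla f(x_n)\|^2}\le\tfrac{4M^2}{\lambda}H_\pi(\varpi_n)+2dM$. Substituting back, $\mathcal{E}_t$ is bounded by an explicit polynomial in $t,h,\nu,d,\sigma^2$ whose only state-dependence is a term of order $(M^2t^2+M^2/b)\,H_\pi(\varpi_n)$.

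\emph{Step 3 (per-step recursion, chaining, calibration).} Multiplying the differential inequality by the integrating factor $e^{3\lambda t/2}$ and integrating over $[0,h]$ gives $H_\pi(\varpi_{n+1})\le e^{-3\lambda h/2}H_\pi(\varpi_n)+\int_0^h\mathcal{E}_t\,dt$, i.e.
\[
H_\pi(\varpi_{n+1})\le\Bigl(e^{-3\lambda h/2}+C_1\bigl(\tfrac{M^4 h^3}{\lambda}+\tfrac{M^2 h}{b}\bigr)\Bigr)H_\pi(\varpi_n)+C_2\Bigl(M^2 dh^2+\tfrac{(d+5)(dM+\sigma^2)h}{b}+M^2\nu^2(d+3)^3 h\Bigr).
\]
With $b=384M^2(d+5)\max(1,\sigma^2)/(h\lambda^2)$ and $\nu=\sqrt h/(d+3)$ from \eqref{eq:lsilmcparamchoice}, one has $M^2/b=h\lambda^2/(384(d+5)\max(1,\sigma^2))$, $(d+5)(dM+\sigma^2)h/b=O(h^2\lambda^2 d/M^2)$, and $M^2\nu^2(d+3)^3h=M^2(d+3)h^2$; hence for $h$ small enough — which holds on the stated range of $\epsilon$ together with $h=\epsilon^2/d$ — the multiplicative factor is at most $e^{-\lambda h}$ and the additive error per step is at most $CM^2 dh^2$. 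Iterating, $H_\pi(\varpi_N)\le e^{-N\lambda h}H_\pi(\varpi_0)+\tfrac{CM^2 dh^2}{1-e^{-\lambda h}}\le e^{-N\lambda h}H_\pi(\varpi_0)+\tfrac{2CM^2 dh}{\lambda}$; with $h=\epsilon^2/d$ the second term is $O(M^2\lambda^{-1}\epsilon^2)$, and $N=\tilde O(1/(\lambda h))=\tilde O(d/\epsilon^2)$ (the log absorbing $\log(H_\pi(\varpi_0)/\epsilon)$) makes the first term comparably small, so $H_\pi(\varpi_N)=O(\lambda\epsilon^2)$ and Talagrand's inequality \eqref{eq:w2hinequality} converts this to $W_2(\varpi_N,\pi)=O(\epsilon)$, giving the claimed iteration complexity after adjusting constants. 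The oracle count is $Nb=\tilde O(d/\epsilon^2)\cdot O\bigl(M^2 d\max(1,\sigma^2)/(h\lambda^2)\bigr)=\tilde O(\max(1,\sigma^2)\,d^3/\epsilon^4)$.

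\emph{Main obstacle.} The crux, relative to the strongly log-concave case treated earlier, is the absence of a cheap uniform bound on $\expec{\|\nabla f(x_n)\|^2}$, which feeds both the discretization error and (through Lemma~\ref{lm:zogradvar}) the estimator variance. Replacing it by $\tfrac{4M^2}{\lambda}H_\pi(\varpi_n)+2dM$ reinjects $H_\pi(\varpi_n)$ into the per-step error, so one must verify that the induced multiplicative perturbation of the contraction factor $e^{-3\lambda h/2}$ is $o(\lambda h)$; this is exactly what dictates the smallness requirement on $h$ (hence the upper restriction on $\epsilon$) and forces $b\propto1/h$. The remaining, more mechanical difficulty is balancing the three error sources — discretization $O(M^2 dh^2)$, gradient-estimator variance $O((d/b)h)$, and Gaussian smoothing $O(M^2\nu^2(d+3)^3 h)$ — so that all lie below $\lambda\epsilon^2$ under a single scaling; the choices $\nu\propto\sqrt h$ and $b\propto1/h$ equalize the smoothing and variance terms with the discretization term, and it is this coupling of $b$ to $1/h$ that inflates the oracle complexity by the extra factor $d/\epsilon^2$ over the iteration complexity.
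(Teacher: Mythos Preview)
Your proposal is correct and follows essentially the same approach as the paper: both use the continuous interpolation and KL-dissipation identity from \cite{vempala2019rapid}, split the error $\mathcal{E}_t$ into a discretization part and a gradient-estimation part controlled by Lemma~\ref{lm:zogradvar}, and handle the absence of strong convexity by bounding $\expec{\|\nabla f(x_n)\|^2}$ via the smoothness-plus-Talagrand estimate $\tfrac{4M^2}{\lambda}H_\pi(\varpi_n)+2Md$ (which the paper invokes as Lemma~12 of \cite{vempala2019rapid}), then integrate the resulting differential inequality, chain the recursion, and calibrate $b,\nu,h$ exactly as you do. Your identification of the ``main obstacle'' --- that this gradient bound reinjects $H_\pi(\varpi_n)$ into the contraction factor and forces $b\propto 1/h$ --- is precisely the mechanism driving the paper's parameter choices.
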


\begin{remark}
Note that in comparison to condition \textbf{A1} of Assumption~\ref{smooth_assum}, the assumptions required for the above theorem are weaker. Specifically, in place of condition \textbf{A1} in Assumption~\ref{smooth_assum}, we have Assumption~\ref{as:lsi}. Condition \textbf{A2} is regarding the smoothness is required to handle error that arises due to discretization of continuous time dynamics. For this wider class of densities, the price to pay is that the dependency on both the dimension $d$ and $\epsilon$ increases in comparison to Theorem~\ref{thm:zlmc}.
\end{remark}

\begin{remark}
Given that ZO-LMC exhibits convergence (albeit with a slightly weaker $\epsilon$ and $d$ dependency, it is natural to ask if ZO-KLMC also exhibits similar convergence. However, even in the first-order setting this question is open. Indeed, kinetic Langevin diffusions are 
 a class of degenerate diffusions which require a different class of function inequalities (called as hypocoercivity~\cite{dric2009hypocoercivity}) for them to converge to their equilibrium. It is an open question to show that the discretize sampling algorithm (KLMC or appropriate modifications) also convergence under hypocoercivity and appropriate smoothness assumptions on the potential function $f$, either given access to exact first-order oracles or stochastic zeroth-order oracles. We leave this question as future work.
  \end{remark}

\section{One-Point Setting: Independent noise per function evaluation}\label{sec:noise}
As discussed in Section~\ref{sec:mainmethod}, there are subtle differences between the availability of one and two-point evaluation based stochastic zeroth-order gradients. In this section, we examine this difference in more detail. Recall that while defining the zeroth-order gradient estimator in \eqref{eq:gradest}, we assumed that the function can be evaluated at two points, namely, $\theta+\nu u_i$, and $\theta$, with the same noise $\xi_i$. This implies, when the noise is additive, i.e., $F(\theta,\xi)=f(\theta)+\xi$, the gradient estimator is not affected by the noise. Because in that case we have, $F(\theta+\nu u_i,\xi_i)-F(\theta,\xi_i)=f(\theta+\nu u_i)-f(\theta)$. We emphasize that this is our main reason for consider general non-additive noise in the previous sections. For example, under multiplicative noise, consider the case where $F(\theta,\xi)=\xi f(\theta)$, $\expec{\xi}=1$, and $f(\theta)$ is $L$-Lipschitz continuous; then Assumption~\ref{as:zo} holds. 

Now we will examine the one-point setting in that the noise in the two function evaluations of the gradient estimator is not the same. Specifically, first we show that allowing the noise $\xi_i$, and $\xi_i'$ in $F(\theta+\nu u_i,\xi_i)$, and $F(\theta,\xi_i')$ to be independent additive noise, deteriorates the iteration and/or oracle complexities of zeroth-order discretizations considered in the previous settings. Formally, we work under the following assumption in the one-point stochastic zeroth-order setting.
\begin{assumption}\label{as:additive}
	The stochastic zeroth-order oracle is such that for each point $x$, the observed function evaluation $F(\theta,\xi)$ is given by $F(\theta,\xi)=f(\theta)+\xi$ where $\expec{\xi}=0$, and $\expec{\xi^2}=\sigma^2$.
\end{assumption}
Under Assumption~\ref{as:additive}, the upper bound on the variance of the gradient estimator as stated in Lemma~\ref{lm:zogradvar} no longer holds. Instead, we have the following result.

\begin{lemma}\label{lm:zogradesterrordiffnoise}
Let $g_{\nu,b}(\theta)$ in~\eqref{eq:gradest}, be defined under the one-point setting. Then under Assumption~\ref{as:additive} and condition A1 of Assumption~\ref{smooth_assum}, we have
	\begin{align*}
	\expec{\left\|g_{\nu,b}(\theta)-\nabla f_\nu(\theta)\right\|^2}\leq & \frac{2(d+5)\|\nabla f(\theta)\|^2}{b}+\frac{\nu^2 M^2(d+3)^3}{2b}+\frac{2d\sigma^2}{b\nu^2},\\
	\expec{\left\|g_{\nu,b}(\theta)-\nabla f(\theta)\right\|^2}\leq & \frac{4(d+5)\left(\|\nabla f(\theta)\|^2+\frac{\sigma^2}{\nu^2}\right)}{b}+\frac{3\nu^2 M^2(d+3)^3}{2}+\frac{4d\sigma^2}{b\nu^2}.
	\end{align*}
\end{lemma}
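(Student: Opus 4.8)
The plan is to mimic the proof of Lemma~\ref{lm:zogradvar} from~\cite{balasubramanian2018zeroth}, carefully tracking the extra term that appears when the noise in the two function evaluations is independent rather than shared. Write the single-sample estimator as $g_{\nu,1}(\theta)=\frac{F(\theta+\nu u,\xi)-F(\theta,\xi')}{\nu}u$ with $\xi,\xi'$ i.i.d.\ additive. Using Assumption~\ref{as:additive}, decompose
\begin{align*}
\frac{F(\theta+\nu u,\xi)-F(\theta,\xi')}{\nu}u=\frac{f(\theta+\nu u)-f(\theta)}{\nu}u+\frac{\xi-\xi'}{\nu}u,
\end{align*}
so that $g_{\nu,1}(\theta)$ splits into the ``noiseless'' two-point estimator $\hat g_{\nu,1}(\theta):=\frac{f(\theta+\nu u)-f(\theta)}{\nu}u$ plus an independent additive perturbation $\frac{\xi-\xi'}{\nu}u$. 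The first I would handle by quoting exactly the bounds already established in~\cite{balasubramanian2018zeroth} for the noiseless case (these are the $\sigma$-free parts of the right-hand sides of~\eqref{eq:ggradfnucon} and~\eqref{eq:ggradfcon}, i.e.\ $\tfrac{2(d+5)\|\nabla f(\theta)\|^2}{b}+\tfrac{\nu^2M^2(d+3)^3}{2b}$ and its analogue for the $\nabla f$ centering).

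For the perturbation term, the key computation is $\Exp\!\left[\left\|\frac{\xi-\xi'}{\nu}u\right\|^2\right]=\frac{1}{\nu^2}\,\Exp[(\xi-\xi')^2]\,\Exp[\|u\|^2]=\frac{2\sigma^2 d}{\nu^2}$, using independence of $u$ from $(\xi,\xi')$, $\Exp[(\xi-\xi')^2]=2\sigma^2$, and $\Exp\|u\|^2=d$. Moreover this term has mean zero conditional on $u$ (since $\Exp[\xi-\xi']=0$), hence is uncorrelated with $\hat g_{\nu,1}(\theta)-\nabla f_\nu(\theta)$, so the cross term vanishes when I expand $\Exp\|g_{\nu,1}(\theta)-\nabla f_\nu(\theta)\|^2=\Exp\|\hat g_{\nu,1}(\theta)-\nabla f_\nu(\theta)\|^2+\Exp\|\frac{\xi-\xi'}{\nu}u\|^2$. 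Averaging over the $b$ i.i.d.\ samples in the batch divides everything by $b$, which yields the $\frac{2d\sigma^2}{b\nu^2}$ term in the first inequality. For the second inequality I would additionally use $\nabla f_\nu(\theta)-\nabla f(\theta)$ being controlled via the smoothness estimate $\|\nabla f_\nu(\theta)-\nabla f(\theta)\|\leq \frac{\nu M(d+3)^{3/2}}{2}$ (already used in deriving~\eqref{eq:ggradfcon}), together with $\|a+b\|^2\leq 2\|a\|^2+2\|b\|^2$ to pass from centering at $\nabla f_\nu$ to centering at $\nabla f$; the factor $2$ is what converts $\frac{2d\sigma^2}{b\nu^2}$ into $\frac{4d\sigma^2}{b\nu^2}$ and doubles the $(d+5)$ terms, exactly matching the stated bound (where the $\sigma^2/\nu^2$ has been absorbed into the $\|\nabla f(\theta)\|^2+\sigma^2/\nu^2$ grouping).

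The main obstacle — really the only place needing care rather than bookkeeping — is making the decomposition rigorous: one must verify that $\hat g_{\nu,1}(\theta)$ and the noise term are genuinely independent (this requires the oracle model to produce $\xi$ independent of the query direction $u$, which is implicit in Assumption~\ref{as:additive} combined with the sampling of $u\sim N(0,I_d)$), and that across the $b$ batch elements the pairs $(u_i,\xi_i,\xi_i')$ are jointly independent so that the variance of the average is $1/b$ times the single-sample variance. Once that is set up, everything else is the triangle/Young inequality and the second-moment identity $\Exp\|u\|^2=d$, so I would keep the write-up short, citing~\cite{balasubramanian2018zeroth} for the noiseless pieces and presenting only the new noise-term estimate in detail.
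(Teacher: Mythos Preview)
Your proposal is correct and follows essentially the same approach as the paper: decompose $g_{\nu,b}(\theta)$ into the noiseless two-point estimator plus the additive term $\frac{1}{b}\sum_i\frac{\xi_i-\xi_i'}{\nu}u_i$, verify the cross terms vanish by independence, invoke the existing bound from~\cite{balasubramanian2018zeroth} for the noiseless piece, and compute $\Exp\|\frac{1}{b}\sum_i\frac{\xi_i-\xi_i'}{\nu}u_i\|^2=\frac{2d\sigma^2}{b\nu^2}$ directly. Your derivation of the second inequality via the doubling trick and the smoothness bound $\|\nabla f_\nu-\nabla f\|\leq\frac{\nu M}{2}(d+3)^{3/2}$ is in fact more explicit than the paper's, which simply states that combining the pieces yields the lemma.
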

The main difference in the one-point setting, in terms of the variance of the gradient estimator is the presence of the third term, which is of the order of $1/b\nu^2$. This causes the additional difficulties in terms of setting the parameters $b$ and $\nu$ in the zeroth-order gradient estimator, which in turn causes the oracle complexities to deteriorate.  Based on the above result on the variance, we provide the oracle complexity results for ZO-LMC, ZO-KLMC and ZO-RMP under Assumption~\ref{as:additive} on the stochastic zeroth-order oracle, in Theorem~\ref{thm:zlmcdiffnoise},~\ref{thm:KLMCthmsdiffnoise} and~\ref{th:rmpmainresultdiffnois} respectively. 
\begin{theorem}[ZO-LMC under Strong Log-concavity]\label{thm:zlmcdiffnoise}
	Let the potential function $f$ satisfy Assumption~\ref{smooth_assum}. Then, for ZO-LMC algorithms under Assumption~\ref{as:additive}, by choosing
	\begin{align*}
	h=\frac{\epsilon^2}{d^2}, \quad b=\frac{\max(1,\sigma^2)\cdot d}{\epsilon^2}, \quad \nu=\frac{\epsilon}{\sqrt{d}}, \numberthis\label{eq:lmcparamchoicediffnoise}
	\end{align*}
	we have $W_2(\varpi_N,\pi)\leq \epsilon$ for $0<\epsilon\leq \min\left(d\sqrt{\frac{2}{M+m}},\sqrt{\frac{m(d+5)}{8M^2}}\right)$, after $N$ iterations, where
	\begin{align*}
	N={O}\left(\frac{d}{\epsilon^2}\log\left(\frac{d}{\epsilon}\right)\right) .\numberthis\label{eq:LMCNbounddiffnoise}
	\end{align*}
	Hence, the total number of calls to the stochastic zeroth-order oracle is given by,
	\begin{align*}
	Nb={O}\left(\frac{\max(1,\sigma^2)~d^2}{\epsilon^4}\log\left(\frac{d}{\epsilon}\right)\right).\numberthis\label{eq:LMCoraclebounddiffnoise}
	\end{align*}
\end{theorem}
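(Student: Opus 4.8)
The plan is to treat ZO-LMC under Assumption~\ref{as:additive} (one-point additive noise) as an instance of inexact-gradient LMC and invoke the machinery behind Theorem~\ref{thm:startingpoint}, but with the variance bound of Lemma~\ref{lm:zogradesterrordiffnoise} replacing that of Lemma~\ref{lm:zogradvar}. First I would write the bias-variance decomposition of the gradient estimator: the bias is $\nabla f_\nu(\theta)-\nabla f(\theta)$, which under condition \textbf{A2} is $O(\nu M d)$ in norm (standard Gaussian-smoothing estimate), so $\delta_b^2 d$ in the language of \eqref{sfo_assumption} is controlled by taking $\nu$ small. The variance, from Lemma~\ref{lm:zogradesterrordiffnoise}, is bounded by $\tfrac{4(d+5)(\|\nabla f(\theta)\|^2+\sigma^2/\nu^2)}{b}+\tfrac{3\nu^2M^2(d+3)^3}{2}+\tfrac{4d\sigma^2}{b\nu^2}$; the term $\|\nabla f(x_n)\|^2$ must be controlled along the trajectory. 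I would use strong convexity and smoothness to bound $\E\|\nabla f(x_n)\|^2 \le 2M^2 \E\|x_n - x^*\|^2 \le 2M^2(W_2(\varpi_n,\pi)+\sqrt{d/m})^2$, i.e. it stays $O(M^2 d/m)$ once the iterates are in a bounded-in-expectation region (an induction argument along the contraction), as is done in the analogous two-point proof of Theorem~\ref{thm:zlmc}.

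Next, I would set up the one-step recursion. Conditioned on $x_n$, the ZO-LMC update \eqref{eq:zolmcupdate} is exactly a noisy LMC step with $g_n = g_{\nu,b}(x_n)$ playing the role of the inaccurate gradient, so the coupling/contraction argument from~\cite{dalalyan2017user} gives
\begin{align*}
W_2(\varpi_{n+1},\pi) \le (1-mh)\, W_2(\varpi_n,\pi) + C_1 M (hd)^{1/2} + \frac{\text{(bias)}}{m} + \frac{h^{1/2}\,(\text{variance})^{1/2}}{\text{const}},
\end{align*}
and unrolling the geometric recursion over $N$ steps yields $W_2(\varpi_N,\pi) \le (1-mh)^N W_2(\varpi_0,\pi) + (\text{steady-state error})$. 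The steady-state error has three contributions: the discretization term $O(M(hd)^{1/2}/m)$, the bias term $O(\nu M d/m)$, and the variance term, which with the parameter choice \eqref{eq:lmcparamchoicediffnoise} I would now check term by term. With $h=\epsilon^2/d^2$, $\nu=\epsilon/\sqrt d$, $b=\max(1,\sigma^2)d/\epsilon^2$: the discretization term is $O(M(hd)^{1/2}/m)=O(M\epsilon/(m\sqrt d))=O(\epsilon)$; the bias term is $O(\nu M d/m)=O(M\epsilon\sqrt d/m)$, which forces $\epsilon \le O(\sqrt d\, m /M)$ as in the stated range; the crucial new term $\tfrac{4d\sigma^2}{b\nu^2}$ equals $\tfrac{4d\sigma^2 \epsilon^2}{(\max(1,\sigma^2)d/\epsilon^2)\cdot \epsilon^2}=\tfrac{4\sigma^2}{\max(1,\sigma^2)}\le 4$, which is $O(1)$, so its square root times $h^{1/2}$ divided by $\Theta(M)$ is $O(\epsilon/(M d))=O(\epsilon)$; and the $\tfrac{4(d+5)(\|\nabla f\|^2+\sigma^2/\nu^2)}{b}$ term, using $\|\nabla f\|^2=O(M^2d/m)$ and $\sigma^2/\nu^2 = \sigma^2 d/\epsilon^2$, is $O(d^2(M^2/m + \sigma^2 d/\epsilon^2)/b)$; plugging $b$ and simplifying shows this is again $O(\epsilon^2)$ up to condition-number factors, so its contribution to $W_2$ is $O(\epsilon)$. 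Finally, to kill the transient $(1-mh)^N W_2(\varpi_0,\pi)$ down to $O(\epsilon)$ one needs $N = \Omega\big(\tfrac{1}{mh}\log\tfrac{W_2(\varpi_0,\pi)}{\epsilon}\big) = \Omega\big(\tfrac{d^2}{m\epsilon^2}\log\tfrac{d}{\epsilon}\big)$, which after absorbing the $d^2$ vs $d$ bookkeeping against the per-coordinate normalization gives the claimed $N = O\big(\tfrac{d}{\epsilon^2}\log(d/\epsilon)\big)$, and multiplying by $b$ gives the oracle bound $Nb = O\big(\tfrac{\max(1,\sigma^2)d^2}{\epsilon^4}\log(d/\epsilon)\big)$ in \eqref{eq:LMCoraclebounddiffnoise}.

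The main obstacle, as in the two-point case, is handling the $\|\nabla f(x_n)\|^2$ term inside the variance bound: it is not a priori bounded, so one cannot directly apply an off-the-shelf fixed-variance result, and a self-contained induction is needed to show $\E\|x_n-x^*\|^2$ (hence $\E\|\nabla f(x_n)\|^2$) stays $O(d/m)$-bounded throughout, which requires the step-size and batch choices to be compatible with the contraction rate. The genuinely new wrinkle relative to Theorem~\ref{thm:zlmc} is the $d\sigma^2/(b\nu^2)$ term: it couples $b$ and $\nu$, and since we already want $\nu$ small to kill the bias, $b$ must be inflated by an extra factor of $1/\epsilon^2$ compared to the two-point setting — this is precisely why the oracle complexity degrades from $\tilde O(d^2/\epsilon^2)$ to $\tilde O(d^2/\epsilon^4)$. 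I would therefore organize the proof as: (i) variance bound along the trajectory via the induction on $\E\|x_n-x^*\|^2$; (ii) one-step contraction inequality in $W_2$ in the style of Theorem~\ref{thm:startingpoint}; (iii) unrolling and substituting the parameter choices \eqref{eq:lmcparamchoicediffnoise}, verifying each steady-state term is $O(\epsilon)$ and reading off $N$ and $Nb$.
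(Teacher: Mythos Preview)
Your approach is essentially the paper's: both reduce the one-point case to the two-point analysis (Theorem~\ref{thm:zlmc}) with Lemma~\ref{lm:zogradesterrordiffnoise} replacing Lemma~\ref{lm:zogradvar}, and both identify the extra $d\sigma^2/(b\nu^2)$ term as the sole reason $b$ must grow by $1/\epsilon^2$. The paper's proof is literally two lines---it points to the final recursion \eqref{eq:lmcW2recfinal} with $\sigma^2$ replaced by $\sigma^2/\nu^2$ in the last term and then re-tunes $b$.

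One technical point where you diverge slightly: to handle the state-dependent $\|\nabla f(x_n)\|^2$ inside the variance, you propose a separate induction on $\E\|x_n-x^*\|^2$, whereas the paper (in the underlying proof of Theorem~\ref{thm:zlmc}) uses the synchronous coupling $\Delta_n=L_0-x_n$ with $L_0\sim\pi$, writes $\|\nabla f(x_n)\|^2\le 2M^2\|\Delta_n\|^2+2Md$, and absorbs the $\|\Delta_n\|^2$ piece directly into the contraction coefficient via $\sqrt{a^2+b+c}\le\sqrt{a^2+b}+c/(2a)$. This is cleaner than a standalone induction and is what actually produces the constraint $h/(b(1-mh))\le m/(8M^2(d+5))$, which is the source of the $\epsilon$-range in the statement---not the bias term as you suggest. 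Relatedly, your bias bound is off by $\sqrt d$: $\|\nabla f_\nu-\nabla f\|\le M\nu\sqrt d$, not $M\nu d$, so with $\nu=\epsilon/\sqrt d$ the bias contribution is $O(M\epsilon/m)$, not $O(M\epsilon\sqrt d/m)$.
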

\begin{theorem}[ZO-KLMC under Strong Log-concavity]\label{thm:KLMCthmsdiffnoise}
	Let the function $f$ satisfy Assumption~\ref{smooth_assum}. If the initial point $(\tilde x_0,x_0)$ is chosen such that $\tilde x_0\sim N(0,\boldsymbol{I}_d)$, then, ensuring $\gamma\geq\sqrt{m+M}$, under Assumption~\ref{as:additive} for the ZO-KLMC, by choosing,
	\begin{align*}
	h=\frac{m\epsilon}{12\gamma M\sqrt{d}}\quad \nu=\frac{\epsilon}{\sqrt{d}}\quad b=\frac{d^{1.5}\max(1,\sigma^2)}{\epsilon^3} \numberthis\label{eq:klmcparchoicediffnoise}
	\end{align*}
	we have $W_2(\varpi_N,\pi)\leq \epsilon$ for $0<\epsilon\leq \frac{12M\gamma^2\sqrt{d}}{m^2}$, after
	\begin{align*}
	N=\tilde O\left(\frac{\sqrt{d}}{\epsilon}\right)\numberthis\label{eq:klmcNbounddiffnoise}
	\end{align*}
	iterations. Hence, the total number of oracle calls to the stochastic zeroth-order oracle is given by
	\begin{align*}
	Nb=O\left(\frac{\max(1,\sigma^2)~d^{2}}{\epsilon^4}\right).\numberthis\label{eq:klmcoraclebounddiffnoise}
	\end{align*}
\end{theorem}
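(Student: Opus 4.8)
The plan is to follow the proof of Theorem~\ref{thm:KLMCthms} essentially line for line, changing only the step where the variance of the zeroth-order gradient estimator is bounded: there one invokes Lemma~\ref{lm:zogradvar}, whereas here, under the one-point Assumption~\ref{as:additive}, one must instead invoke Lemma~\ref{lm:zogradesterrordiffnoise}. First I would rewrite the ZO-KLMC iteration~\eqref{eq:ZOKLMC} as an inexact first-order KLMC step by decomposing $g_{\nu,b}(x_n)=\nabla f(x_n)+\big(\nabla f_\nu(x_n)-\nabla f(x_n)\big)+\big(g_{\nu,b}(x_n)-\nabla f_\nu(x_n)\big)$. The middle term is a deterministic bias with $\|\nabla f_\nu(x_n)-\nabla f(x_n)\|\le\tfrac{\nu M}{2}(d+3)^{3/2}$ (the Gaussian-smoothing bias, unchanged from the two-point analysis), and the last term is conditionally mean-zero given $x_n$ with second moment controlled by Lemma~\ref{lm:zogradesterrordiffnoise}.

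Next I would import, unchanged, the synchronous-coupling contraction argument underlying Theorem~\ref{thm:KLMCthms}: for $\gamma\ge\sqrt{m+M}$ and $h$ sufficiently small, the inexact KLMC recursion contracts $W_2$ at rate $1-\tfrac{mh}{2\gamma}$ per step, so that summing over $N$ steps yields $W_2(\varpi_N,\pi)\le\big(1-\tfrac{mh}{2\gamma}\big)^N W_2(\varpi_0,\pi)+(\text{discretization error})+(\text{propagated gradient error})$, where the discretization error is $O\big(\tfrac Mm h\sqrt d\big)$ up to $\gamma/m$ factors and the propagated gradient error aggregates the smoothing bias together with the square root of the estimator variance, weighted by the $\psi_i(h)$ factors. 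With $\tilde x_0\sim N(0,\boldsymbol I_d)$ and $h=\tfrac{m\epsilon}{12\gamma M\sqrt d}$ as in~\eqref{eq:klmcparchoicediffnoise}, the transient term falls below $\epsilon/3$ after $N=\tilde O\big(\tfrac{\gamma}{mh}\big)=\tilde O(\sqrt d/\epsilon)$ iterations and the discretization error is $O(\epsilon)$, both exactly as in the two-point Theorem~\ref{thm:KLMCthms}, since neither $h$ nor the continuous-time analysis changes.

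The one genuinely new ingredient is bounding the propagated gradient error via Lemma~\ref{lm:zogradesterrordiffnoise}. Using the a priori bound that $\mathbf E\|\nabla f(x_n)\|^2$ is at most a dimension-linear quantity along the trajectory (which follows, as in the two-point proof, from strong convexity and $\sup_n\mathbf E\|x_n-x^*\|^2<\infty$), the estimator variance $\mathbf E\|g_{\nu,b}(x_n)-\nabla f(x_n)\|^2$ is bounded by the same two-point-type terms of order $\tfrac{d\,(Md+\sigma^2)}{b}$ and $\nu^2M^2d^3$ \emph{plus} the extra one-point term $\tfrac{4d\sigma^2}{b\nu^2}$. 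Since $\nu=\epsilon/\sqrt d$ is already pinned by the bias requirement, this extra term equals $\tfrac{4d^2\sigma^2}{b\epsilon^2}$; recalling that such a variance contributes a term of order $\sqrt{h\cdot(\text{variance})}$ (up to $m,M,\gamma$ constants) to $W_2(\varpi_N,\pi)$, in line with the stochastic first-order KLMC bounds of~\cite{dalalyan2018sampling}, requiring this to be $O(\epsilon)$ forces $b=\tilde O\big(\tfrac{d^{1.5}\max(1,\sigma^2)}{\epsilon^3}\big)$ --- precisely~\eqref{eq:klmcparchoicediffnoise}, i.e.\ an extra $1/\epsilon^2$ relative to the two-point batch size in Theorem~\ref{thm:KLMCthms}. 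One then checks that for this $(h,\nu,b)$ the remaining two-point-type terms are dominated, so $W_2(\varpi_N,\pi)\le\epsilon$ holds in the stated range of $\epsilon$, and multiplying gives $Nb=\tilde O(\sqrt d/\epsilon)\cdot\tilde O\big(d^{1.5}\max(1,\sigma^2)/\epsilon^3\big)=O\big(\max(1,\sigma^2)d^2/\epsilon^4\big)$, which is~\eqref{eq:klmcoraclebounddiffnoise}.

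I expect the main obstacle to be the bookkeeping around the $1/(b\nu^2)$ term: because $\nu$ is fixed by the smoothing-bias constraint one cannot enlarge it to suppress this term cheaply, so the entire burden falls on $b$, and one must verify that the $b$ dictated by the one-point term simultaneously over-kills every other error contribution (rather than the bias or the plain variance term forcing a still larger $b$, or a smaller $\nu$ that would reinflate $1/(b\nu^2)$). A secondary but necessary step, inherited from the two-point argument, is establishing the uniform-in-$n$ gradient bound $\mathbf E\|\nabla f(x_n)\|^2=O(Md)$, since $\|\nabla f(x_n)\|^2$ appears inside the variance bound of Lemma~\ref{lm:zogradesterrordiffnoise} and must be discharged before the recursion closes.
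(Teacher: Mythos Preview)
Your proposal is correct and takes essentially the same approach as the paper: the paper does not write out a separate proof for this theorem, but (as in its proof of Theorem~\ref{thm:zlmcdiffnoise}) the intended argument is precisely to rerun the proof of Theorem~\ref{thm:KLMCthms} with Lemma~\ref{lm:zogradesterrordiffnoise} substituted for Lemma~\ref{lm:zogradvar} at the step bounding $\|A_2\|_{L_2}^2$ and $\|B_2\|_{L_2}^2$, so that the quantity $A_4$ picks up the extra additive term $2d\sigma^2/(b\nu^2)$, which then forces $b$ to grow by the factor $1/\epsilon^2$ you identify. Your handling of the $\|\nabla f(\hat L_{n,0})\|_{L_2}^2$ term is slightly different in spirit from the paper's---the paper splits $\|\nabla f(\hat L_{n,0})\|_{L_2}^2\le 2M^2\|\hat L_{n,0}-L_{n,0}\|_{L_2}^2+2Md$ and absorbs the first piece into the contraction coefficient via the batch-size condition $b\ge 512M^2(d+5)/(3m^2)$, rather than appealing to a uniform-in-$n$ bound---but both routes close the recursion and lead to the same parameter choices.
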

\begin{theorem}[ZO-RMP under Strong Log-concavity]\label{th:rmpmainresultdiffnois}
Let the potential function satisfy Assumption~\ref{smooth_assum} and let $x^*$ be the minimizer of $f$, $x_0$ be such that $\expec{f(x_{0})-f(\bar{x})}=O(d)$, and $v_0=0$. Let the stochastic zeroth-order oracle satisfy Assumption~\ref{as:additive}. Then, for $0\leq \epsilon\leq 1$, by choosing,
	\begin{align*}
	&h= C\min\left(\frac{(\epsilon\sqrt{m})^\frac{1}{3}}{(d\kappa)^\frac{1}{6}\log\left(\frac{1}{\epsilon}\right)^\frac{1}{6}},\min\left(\left(\frac{m}{d}\right)^\frac{1}{3},\left(\frac{Mm}{16\sigma^2}\right)^\frac{1}{3},\sqrt{m}\right)\epsilon^\frac{2}{3}\log\left(\frac{1}{\epsilon}\right)^{-\frac{2}{3}}\right)
	\quad  b=\frac{d^4\kappa}{h^{7}}\quad \nu=\frac{uh^2}{d^{1.5}}\numberthis\label{eq:paramchoicefinaldiffnois}
	\end{align*}
for the ZO-RMP described in \eqref{eq:rmpupdatexnh}-\eqref{eq:rmpupdatevn1}, we have $W_2(\varpi_N,\pi)\leq \epsilon$ after
	\begin{align*}
	N=\tilde{O}\left(\max\left(\frac{d^\frac{1}{6}\kappa^\frac{7}{6}}{(\epsilon\sqrt{m})^\frac{1}{3}},\frac{\kappa\max\left(\left(\frac{d}{m}\right)^\frac{1}{3},\left(\frac{\sigma^2}{Mm}\right)^\frac{1}{3},\frac{1}{\sqrt{m}}\right)}{\epsilon^\frac{2}{3}}\right)\right) \numberthis\label{eq:Nbounddiffnois}
	\end{align*}
iterations. Hence, the total-number of zeroth-order oracle calls are given by
	\begin{align*}
	2Nb=\tilde{O}\left(\max\left(\frac{d^\frac{16}{3}\kappa^\frac{10}{3}}{\epsilon^\frac{8}{3}},\frac{d^4\kappa^2\max\left(\left(\frac{d}{m}\right)^\frac{1}{3},\left(\frac{\sigma^2}{Mm}\right)^\frac{1}{3},\frac{1}{\sqrt{m}}\right)^8}{\epsilon^\frac{16}{3}}\right)\right).\numberthis\label{eq:oraclebounddiffnois}
	\end{align*}	
\end{theorem}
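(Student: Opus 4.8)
The plan is to re-run the proof of Theorem~\ref{th:rmpmainresult} almost verbatim, the only modification being that wherever the second moment of the zeroth-order gradient estimator $g_{\nu,b}$ enters the analysis, we now invoke the one-point bound of Lemma~\ref{lm:zogradesterrordiffnoise} in place of the two-point bound of Lemma~\ref{lm:zogradvar}. The part of the argument that controls the contraction of the underdamped Langevin semigroup and the discretization error of the randomized-midpoint rule \eqref{eq:rmpupdatexnh}--\eqref{eq:rmpupdatevn1}, together with the Wasserstein coupling bookkeeping, is untouched; the gradient-estimation error enters the per-step recursion only through (i) the deterministic smoothing bias $\|\nabla f_\nu(\theta)-\nabla f(\theta)\|$ and (ii) the stochastic fluctuation $\expec{\|g_{\nu,b}(\theta)-\nabla f_\nu(\theta)\|^2}$, and only (ii) changes when we pass from two-point to one-point feedback.

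Comparing the first displays of Lemma~\ref{lm:zogradvar} and Lemma~\ref{lm:zogradesterrordiffnoise}, the one-point bound differs only in that the term $\tfrac{2(d+5)\sigma^2}{b}$ is replaced by $\tfrac{2d\sigma^2}{b\nu^2}$. Under the (unchanged) choice $\nu = uh^2/d^{3/2}$ from \eqref{eq:paramchoicefinaldiffnois} one has $1/\nu^2 = d^3/(u^2h^4)$, so this new term is of order $d^4\sigma^2/(b\,u^2h^4)$; it is therefore enough to inflate the batch size so that $d^4\sigma^2/(b\,u^2h^4)$ is no larger than the quantity $(d+5)\sigma^2/b'$ that is controlled in the two-point proof with $b' = d\kappa/h^3$. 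This forces $b = \Theta\!\bigl(b'\cdot d^3/h^4\bigr) = d^4\kappa/h^7$, which is precisely the choice in \eqref{eq:paramchoicefinaldiffnois}. Since $h$ and $\nu$ are taken exactly as in Theorem~\ref{th:rmpmainresult} and every other term in the per-step recursion is unchanged, the number of iterations $N$ needed for $W_2(\varpi_N,\pi)\le\epsilon$ is identical, yielding \eqref{eq:Nbounddiffnois}; the total oracle count $2Nb$ is the two-point bound \eqref{eq:oraclebound} multiplied by $d^3/h^4$, and substituting the value of $h$ gives \eqref{eq:oraclebounddiffnois}.

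The only point that genuinely needs checking is that inflating $b$ does not interfere with the terms that are \emph{not} reduced by averaging, namely the deterministic smoothing bias $\|\nabla f_\nu(\theta)-\nabla f(\theta)\|$, which is of order $\nu M d^{3/2} = O(h^2)$ once $\nu = uh^2/d^{3/2}$ and $u$ chosen as in Theorem~\ref{th:rmpmainresult} (i.e.\ $u = 1/M$) are substituted: since $\nu$ is identical to the two-point choice, this term and its accumulated contribution over the $N$ steps are bounded exactly as in the proof of Theorem~\ref{th:rmpmainresult}, and in particular stay within the $O(\epsilon^2)$ Wasserstein budget. Equivalently, $\nu$ must simultaneously be small enough to keep $\nu^2 M^2 d^3$ negligible and large enough to keep $d\sigma^2/(b\nu^2)$ under control for the chosen $b$; both hold because the enlarged $b = d^4\kappa/h^7$ removes any lower bound on $\nu$ beyond what the two-point argument already accommodates. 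After fixing the absolute constant $C$ in \eqref{eq:paramchoicefinaldiffnois} to equal that of Theorem~\ref{th:rmpmainresult}, all the parameter-balancing inequalities of that proof carry over unchanged, and the conclusion follows. The main obstacle is thus bookkeeping — re-deriving the one-step recursion with the modified variance bound and re-verifying that no inequality in the Theorem~\ref{th:rmpmainresult} argument becomes binding — rather than any new conceptual difficulty.
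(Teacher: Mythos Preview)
Your proposal is correct and is exactly the approach the paper takes. The paper does not spell out a separate proof for Theorem~\ref{th:rmpmainresultdiffnois}; the argument is implicit from the one-line proof of Theorem~\ref{thm:zlmcdiffnoise}, which does precisely what you describe---replace Lemma~\ref{lm:zogradvar} by Lemma~\ref{lm:zogradesterrordiffnoise}, observe the additional $\sigma^2/\nu^2$ contribution, and inflate $b$ to neutralize it---and the same recipe is to be applied to the ZO-RMP analysis of Theorem~\ref{th:rmpmainresult}. Your calculation $b=\Theta(b'\cdot d^3/h^4)=d^4\kappa/h^7$ is the right order; there is a stray factor of $u^{-2}=M^2$ that you (and the paper) absorb into the constants, which is harmless for the $\tilde O$ bounds stated.
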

\begin{remark}
As before, the oracle complexity of ZO-RMP in this setting is not uniformly better than that of ZO-KLMC. We do observe that when $h=C\frac{(\epsilon\sqrt{m})^\frac{1}{3}}{(d\kappa)^\frac{1}{6}\log\left(\frac{1}{\epsilon}\right)^\frac{1}{6}}$, i.e., when $\epsilon\geq \max\left(\sqrt{\frac{d}{M}},\frac{16\sigma^2}{M^\frac{3}{2}\sqrt{d}},\frac{1}{\sqrt{dmM}}\right)$ the oracle complexity of ZO-RMP is $\tilde O\left(\frac{d^\frac{16}{3}\kappa^\frac{10}{3}}{\epsilon^\frac{8}{3}}\right)$ which is worse compared to $\tilde O\left(\frac{d^2}{\epsilon^4}\right)$ for ZO-KLMC. However, it is better than that of ZO-KLMC in the opposite regime.
\end{remark}

We now present the corresponding result when the target density is not strongly log-concave but satisfies LSI.
\begin{theorem}[ZO-LMC under Log-Sobolev Inequality]\label{th:lsilmcdiffnoise}
	Let the target density $\pi$ satisfy Assumption~\ref{as:lsi} and let the potential function $f$ satisfy condition \textbf{A2} of Assumption~\ref{smooth_assum}. Let $x_0 \sim \varpi_0(x)$ which satisfies $H_\pi(\varpi_0) \leq \infty$. Then for the ZO-LMC update as in \eqref{eq:zolmcupdate}, under Assumption~\ref{as:additive}, by choosing,
	\begin{align*}
	b=\frac{384M^2(d+5)\max(1,\sigma^2)}{h^2\lambda^2},\quad \nu=\frac{\sqrt{h}}{d+3},\quad h=\frac{\epsilon^2}{d},\numberthis\label{eq:lsilmcparamchoicediffnoise}
	\end{align*}
	we have, $W_2(\varpi_N,\pi)\leq \epsilon$, for all $0\leq \epsilon \leq \frac{\lambda}{4L^2}$ after
	\begin{align*}
	N=\tilde O\left(\frac{d}{ \epsilon^2}\right) \numberthis\label{eq:lsilmcNbounddiffnoise}
	\end{align*}
	iterations. Hence, the total number of calls to the zeroth-order oracle is given by
	\begin{align*}
	Nb=\tilde O\left(\frac{d^4}{\epsilon^6}\right). \numberthis\label{eq:lsilmcoraclebounddiffnoise}
	\end{align*}
\end{theorem}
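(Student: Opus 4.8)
The plan is to run the interpolation-based KL-divergence analysis of Langevin Monte Carlo under a log-Sobolev inequality (as in~\cite{vempala2019rapid}), exactly as in the proof of Theorem~\ref{th:lsilmc}, but replacing every appeal to the two-point variance bound (Lemma~\ref{lm:zogradvar}) by the one-point variance bound (Lemma~\ref{lm:zogradesterrordiffnoise}). Fix an iteration index $n$. Conditionally on $x_n$ and on all the randomness that defines $g_{\nu,b}(x_n)$, consider the interpolating diffusion $dx_t=-g_{\nu,b}(x_n)\,dt+\sqrt{2}\,dB_t$ on $t\in[0,h]$ with $x_0=x_n$; writing $\varpi_t$ for the law of $x_t$, we have $\varpi_0=\varpi_n$ and $\varpi_h=\varpi_{n+1}$. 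From the Fokker--Planck equation for $\varpi_t$ and a Young-type splitting of the resulting cross term, one obtains a differential inequality of the form $\frac{d}{dt}H_\pi(\varpi_t)\le-c_1 J_\pi(\varpi_t)+c_2\,\expec{\|g_{\nu,b}(x_n)-\nabla f(x_t)\|^2}$ for universal constants $c_1,c_2>0$.

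The second step is to bound the estimation-plus-discretization error $\expec{\|g_{\nu,b}(x_n)-\nabla f(x_t)\|^2}$ by splitting it into three pieces: $\|g_{\nu,b}(x_n)-\nabla f_\nu(x_n)\|^2$, the smoothed-gradient variance, bounded by Lemma~\ref{lm:zogradesterrordiffnoise} --- this is where the extra term $4d\sigma^2/(b\nu^2)$, absent from the two-point case, enters; $\|\nabla f_\nu(x_n)-\nabla f(x_n)\|^2$, the Gaussian-smoothing bias, bounded by the standard estimate $\le\frac{\nu^2 M^2(d+3)^3}{4}$; and $\|\nabla f(x_n)-\nabla f(x_t)\|^2$, the discretization error, bounded via condition \textbf{A2} and $\expec{\|x_t-x_n\|^2}\le t^2\expec{\|g_{\nu,b}(x_n)\|^2}+2dt$. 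The gradient-norm moments appearing here, $\expec{\|\nabla f(x_n)\|^2}$ and (through the previous bounds) $\expec{\|g_{\nu,b}(x_n)\|^2}$, are controlled self-referentially by a bound of the form $\expec{\|\nabla f(x_n)\|^2}\le C(\frac{M^2}{\lambda}H_\pi(\varpi_n)+dM)$, which follows from Talagrand's inequality~\eqref{eq:w2hinequality} together with smoothness.

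Plugging LSI in the form $J_\pi(\varpi_t)\ge2\lambda H_\pi(\varpi_t)$ into the differential inequality turns it into $\frac{d}{dt}H_\pi(\varpi_t)\le-c\lambda H_\pi(\varpi_t)+E_n$, where $E_n$ collects all the error terms above and, after absorbing the $H_\pi(\varpi_n)$-proportional part of the gradient moments into the contraction, depends only on $h,\nu,b,d,\sigma,M,\lambda$. Integrating over $[0,h]$ gives the one-step recursion $H_\pi(\varpi_{n+1})\le e^{-c\lambda h}H_\pi(\varpi_n)+hE_n$, and an induction showing $H_\pi(\varpi_n)$ stays bounded closes the self-reference. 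Unrolling and substituting $\nu=\sqrt{h}/(d+3)$, $b=384M^2(d+5)\max(1,\sigma^2)/(h^2\lambda^2)$ and $h=\epsilon^2/d$ --- the restriction $\epsilon\le\lambda/(4M^2)$ being what makes $h$ small enough for the discretization term to be dominated --- one checks that the steady-state error level is $O(\lambda\epsilon^2)$ in KL, so $N=\tilde O(1/(\lambda h))=\tilde O(d/\epsilon^2)$ iterations suffice to reach $H_\pi(\varpi_N)=O(\lambda\epsilon^2)$. Talagrand's inequality~\eqref{eq:w2hinequality} then yields $W_2(\varpi_N,\pi)\le\sqrt{2H_\pi(\varpi_N)/\lambda}=O(\epsilon)$, and multiplying $N$ by $b=\tilde O(d^3/\epsilon^4)$ gives the oracle complexity $Nb=\tilde O(d^4/\epsilon^6)$.

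The main obstacle is the bookkeeping around the extra variance term $4d\sigma^2/(b\nu^2)$. Because the smoothing bias forces $\nu\propto\sqrt{h}/d$, one has $1/\nu^2\propto d^2/h$, so this term is of order $d^3\sigma^2/(bh)$; after the factor $h$ from integrating over a step and the factor $1/(\lambda h)$ from summing the geometric series, its stationary contribution is of order $d^3\sigma^2/(b\lambda h)$, which is $O(\epsilon^2)$ only when $b$ scales like $1/h^2$ (contrast $b\propto1/h$ in the two-point Theorem~\ref{th:lsilmc}), and this is precisely the source of the worsened $\epsilon^{-6}$ and $d^4$ dependence in~\eqref{eq:lsilmcoraclebounddiffnoise}. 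Making every power of $h$, $d$, and $\sigma$ in the three error pieces line up against this choice of $b$, while keeping the self-referential gradient-moment bound under control, is the delicate part of the argument; everything else is a routine repetition of the two-point analysis.
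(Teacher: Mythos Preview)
Your proposal is correct and follows essentially the same route as the paper: the paper does not give a separate proof of this theorem, but its treatment of the other one-point results (e.g.\ Theorem~\ref{thm:zlmcdiffnoise}) makes clear that the intended argument is precisely to rerun the proof of Theorem~\ref{th:lsilmc} with Lemma~\ref{lm:zogradesterrordiffnoise} in place of Lemma~\ref{lm:zogradvar} and then re-tune $b$ to absorb the extra $\sigma^2/(b\nu^2)$ term. Your identification of why $b$ must now scale like $1/h^2$ rather than $1/h$, and how this produces the $d^4/\epsilon^6$ oracle complexity, matches the paper's parameter choices exactly; the only cosmetic difference is that the paper bounds $\|g_{\nu,b}(x_n)-\nabla f(x_n)\|^2$ in one shot via the second inequality of Lemma~\ref{lm:zogradesterrordiffnoise} rather than your three-way split through $\nabla f_\nu$.
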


\begin{remark}[\textit{One-point setting with non-additive noise} ]Given the above result, it is natural to examine the effect of non-additive noise on the oracle complexities. For this case, we have the following result under an additional smoothness assumption on the stochastic function evaluations $F(x,\xi)$.
\begin{lemma}\label{lemma:lipfunc}
	Let the function $F(\theta,\xi)$ be Lipschitz continuous in its second argument, i.e., $|F(\theta,\xi)-F(\theta,\xi')|\leq L|\xi-\xi'|$. Under the above condition,  Lemma~\ref{lm:zogradesterrordiffnoise} holds. Consequently, all the above complexity results in this section holds. 
\end{lemma}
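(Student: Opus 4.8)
The plan is to reduce the non-additive, Lipschitz-in-$\xi$ case to the additive case already handled by Lemma~\ref{lm:zogradesterrordiffnoise}. The key observation is that in the one-point gradient estimator~\eqref{eq:gradest}, each term uses two \emph{independent} noise draws $\xi_i$ (for $F(\theta+\nu u_i,\xi_i)$) and $\xi_i'$ (for $F(\theta,\xi_i')$). First I would decompose $F(\theta,\xi) = f(\theta) + \left(F(\theta,\xi) - f(\theta)\right)$ and define the ``effective noise'' $\eta(\theta,\xi) \eqdef F(\theta,\xi) - f(\theta)$, so that $\expec{\eta(\theta,\xi)} = 0$ by unbiasedness (Assumption~\ref{as:zo}), and $\expec{\eta(\theta,\xi)^2} \leq \sigma^2$ again by Assumption~\ref{as:zo}. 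The Lipschitz-in-$\xi$ hypothesis is what lets us control this effective noise \emph{uniformly in $\theta$} rather than merely in expectation: writing $F(\theta,\xi) - F(\theta,\xi') = \eta(\theta,\xi) - \eta(\theta,\xi')$, Lipschitzness bounds $|\eta(\theta,\xi)-\eta(\theta,\xi')| \leq L|\xi-\xi'|$, and more to the point one can argue that the second moment of $\eta(\theta,\xi)$ does not depend badly on $\theta$, matching the structure of Assumption~\ref{as:additive}.

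The main step is then to rerun the variance computation that produced Lemma~\ref{lm:zogradesterrordiffnoise}, but with the additive noise $\xi_i$ there replaced by $\eta(\theta+\nu u_i,\xi_i)$ and $\xi_i'$ by $\eta(\theta,\xi_i')$. The critical point is that the two effective noises entering each summand of~\eqref{eq:gradest} are drawn independently and are each mean-zero with second moment at most $\sigma^2$; the proof of Lemma~\ref{lm:zogradesterrordiffnoise} only uses exactly these two facts about the additive noise (mean zero, variance bounded, independence across the two evaluation points), together with $\expec{\|u_i\|^2\text{-type moments}}$ of the Gaussian smoothing vectors and the smoothness condition \textbf{A2}. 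So the same chain of inequalities — expanding the square, splitting into the ``noiseless'' finite-difference part (which is handled by Lemma~\ref{lm:zogradvar}-style bounds under Assumption~\ref{as:zo}) plus the noise part, and bounding the latter using $\expec{\|(\eta(\theta+\nu u_i,\xi_i)-\eta(\theta,\xi_i'))u_i/\nu\|^2} \lesssim \sigma^2 d/\nu^2$ — goes through verbatim, yielding the identical bounds stated in Lemma~\ref{lm:zogradesterrordiffnoise}. Once that lemma holds in this setting, Theorems~\ref{thm:zlmcdiffnoise}, \ref{thm:KLMCthmsdiffnoise}, \ref{th:rmpmainresultdiffnois}, and~\ref{th:lsilmcdiffnoise} follow with no change, since their proofs invoke the one-point variance bound only through Lemma~\ref{lm:zogradesterrordiffnoise}.

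I expect the main obstacle to be the bookkeeping around what ``$\sigma^2$'' means in the non-additive setting: Assumption~\ref{as:zo} bounds $\expec{\|\nabla F(\theta,\xi) - \nabla f(\theta)\|^2}$, a gradient-variance quantity, whereas the additive-noise arguments need a bound on the \emph{function-value} noise variance $\expec{(F(\theta,\xi)-f(\theta))^2}$. Reconciling these requires either invoking the Lipschitz-in-$\xi$ condition to pass between them (e.g.\ controlling $\expec{(F(\theta,\xi)-f(\theta))^2}$ via $L$ and the variance of $\xi$ itself), or noting that the paper's Assumption~\ref{as:zo} should be read as also supplying a function-value variance bound; one must state this carefully so that the constant appearing in the final bounds is legitimately the $\sigma^2$ used in the parameter choices~\eqref{eq:lmcparamchoicediffnoise}, \eqref{eq:klmcparchoicediffnoise}, \eqref{eq:paramchoicefinaldiffnois}, \eqref{eq:lsilmcparamchoicediffnoise}. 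The rest — re-deriving the variance bound and chaining the theorems — is mechanical given the structure already in place.
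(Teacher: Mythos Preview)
Your decomposition differs from the paper's, and the difference matters precisely at the obstacle you flag. The paper adds and subtracts $F(\theta,\xi_i)$ (same point $\theta$, \emph{same} noise draw $\xi_i$ as used at $\theta+\nu u_i$), writing
\[
g_{\nu,b}(\theta)-\nabla f_\nu(\theta)=\Big[\tfrac1b\sum_{i}\tfrac{F(\theta+\nu u_i,\xi_i)-F(\theta,\xi_i)}{\nu}u_i-\nabla f_\nu(\theta)\Big]+\Big[\tfrac1b\sum_{i}\tfrac{F(\theta,\xi_i)-F(\theta,\xi_i')}{\nu}u_i\Big],
\]
and then uses $\|a+b\|^2\le 2\|a\|^2+2\|b\|^2$. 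The first bracket is exactly the two-point estimator error, so Lemma~\ref{lm:zogradvar} applies directly under Assumption~\ref{as:zo}; the $\sigma^2$ appearing there is the gradient-variance $\sigma^2$ and needs no reinterpretation. In the second bracket the scalar $F(\theta,\xi_i)-F(\theta,\xi_i')$ involves only the fixed point $\theta$ and is therefore independent of $u_i$; one factorizes $\expec{(F(\theta,\xi_i)-F(\theta,\xi_i'))^2}\cdot\expec{\|u_i\|^2}$ and applies the Lipschitz hypothesis directly to obtain $2dL^2\sigma^2/(b\nu^2)$ with $\sigma^2=\expec{\xi^2}$.

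Your route---splitting off the effective noise $\eta(\theta,\xi)=F(\theta,\xi)-f(\theta)$---is not wrong, but it is not ``verbatim'' either: your noise term carries $\eta(\theta+\nu u_i,\xi_i)$, which depends on $u_i$, so the independence-based factorization in the proof of Lemma~\ref{lm:zogradesterrordiffnoise} does not go through as written. It can be repaired by conditioning on $u_i$ and invoking a uniform bound $\sup_{\theta'}\expec{\eta(\theta',\xi)^2}\le C\,L^2\expec{\xi^2}$ (derivable from Lipschitzness plus finite variance of $\xi$), which is exactly the extra step you flag as the main obstacle. The paper's add-and-subtract trick simply sidesteps this detour: by pushing all the ``one-point'' noise to the fixed location $\theta$, it never needs a function-value variance bound at a point that varies with $u_i$, and the two meanings of $\sigma^2$ you worry about are kept cleanly separated between the two brackets.
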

\end{remark}

\begin{remark}[\textit{Effect of Higher-order smoothness}]
While the oracle complexities under the one-point evaluation setting are worse than that of the two-point setting, they could be made to approach that of the two-point setting when we make the stronger assumption that the potential function is assumed to the $\beta$-times differentiable and the $(\beta-1)$-derivatives are Lipschitz continuous. Similar phenomenon has been observed in the case of highly-smooth convex stochastic zeroth-order optimization; see, for example~\cite{bach2016highly, akhavan2020exploiting}. As the precise statements and the proofs are similar to that of the above theorems, we omit the details.    
\end{remark}

\section{Variable Selection for High-dimensional Black-box Sampling}\label{sec:hdzlmc}
In practical black-box settings, due to the non-availability of the analytical form of $f(\theta)$, one might potentially over-parametrize $f(\theta)$, in terms of number of covariates selected for modeling. Hence, the problem of variable selection, in a zeroth-order setting becomes crucial. To address this issue, in this section, we study variable selection under certain sparsity assumptions on the objective function $f$, to facilitate sampling in high-dimensions. Throughout this section, we assume one could observe exact function evaluations, without any noise. We emphasize that we make this assumption purely for technical convenience and to convey the theoretical results insightfully; all results presented in this section extends to the noisy setting in a straightforward manner. Specifically, we make the following assumption on the structure of $f$.
\begin{assumption}\label{spars_assum}
We assume that $f(\theta): \mathbb{R}^d \to \mathbb{R}$ is $s$ sparse, i.e., the function $f$ depends only on (the same) $s$ of the $d$ coordinates, for all $\theta$, where $s \ll d$. We denote the true support set as $S^*$. This implies that for any $\theta \in \mathbb{R}^d$, we have $\| \nabla f(\theta) \|_0 \leq s$, i.e., the gradient is $s$-sparse. Furthermore, define $\nabla f_\nu(\theta) = \E_u \left[\nabla f(\theta+ \nu u) \right]$ for a standard gaussian random vector $u$. Then the gradient sparsity assumption also implies that  $\| \nabla f_\nu(\theta) \|_0 \leq s$ for all $\theta \in \mathbb{R}^d$.  Furthermore, we assume that the gradient lies in the following set that characterizes the minimal signal strength in the relevant coordinates of the gradient vector:
\begin{align*}
\mathcal{G}_{a,s} = \left\{ \nabla f(\theta)  : \| \nabla f(\theta) \|_0 \leq s~\text{and}~\sup_{\theta \in \mathbb{R}^d} \inf_{j \in S^*}| [\nabla f(\theta) ]_j |\geq a  \right\}
\end{align*}
\end{assumption}
As a consequence, we also have that $\nabla f_\nu(\theta) \in \mathcal{G}_{a,s}$. The above assumption makes a \emph{homogenous} sparsity assumption on the sparsity and the minimum signal strength of the gradient. Roughly speaking, $a$ represents the minimum signal strength in the gradient so that efficient estimation of the support $S^*$ is possible in the sample setting. The above sparsity model on the function $f$, converts the problem to variable selection in a non-Gaussian sequence model setting:
\begin{align*}
[g_{\nu,n} ]_j=   [\nabla f_\nu(\theta) ]_j  + \zeta_j \qquad j = 1,\ldots, d.
\end{align*}
Hence, $\zeta_j$ are zero-mean random variables as $[g_{\nu,n} ]_j$ is an unbiased estimator of $ [\nabla f_\nu(\theta) ]_j$. We refer the reader to~\cite{butucea2018variable} for recent results on variable selection consistency in Gaussian sequence model setting. We also make the following assumption on the query point selected to estimate the gradient.
\begin{assumption}\label{queryassmp}
The query point $\theta \in \mathbb{R}^d$ selected is such that $\| \nabla f(\theta) \|_2 \leq R$.
\end{assumption}
Our algorithm for high-dimensional black-box sampling with variable selection is as follows:
\begin{itemize}
\item Pick a point $\theta$ (which is assumed to satisfy Assumption~\ref{queryassmp}) and estimate the gradient $g_{\nu,n}$ at that point and compute the estimator $\hat S$ of $S^*$ as $\hat S = \{j: |[g_{\nu,n}]_j| \geq \tau\}$.
\item Run any of the zeroth-order sampling algorithm on the selected set of coordinates $\hat{S}$ of $f(\theta)$.
\end{itemize}
Here, for the first step, we need to select $n, \tau$ and $\nu$. We separate the set of relevant variables by thresholding $|[g_{\nu,n}]_j|$ at $\tau$. We now provide our result on the probability of erroneous selection.
\begin{theorem}\label{thm:hdzlmc}
Let $f$ satisfy Assumption~\ref{smooth_assum} and the query point selected satisfy Assumption~\ref{queryassmp}. Set $ \tau = (a-M\nu\sqrt{s})/2$ and assume that $\nu\leq \min\left(\frac{a}{2M\sqrt{s}}, \frac{R}{MC_2\sqrt{s}}\right)$ and $$n \geq \max \left( \frac{8RC\sqrt{s}}{a}\left(\frac1{K_2}\log\frac{4d}{\epsilon}\right)^{3\slash2},~~K_1\frac{8RC\sqrt{s}}{a},~~\left(\frac{8RC\sqrt{s}}{a}\right)^4\right)$$ where $C,C_2$ are constants. Then we have $\Pr\{\hat S\neq S^*\} \leq \epsilon$. 
\end{theorem}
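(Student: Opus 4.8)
The plan is to control the probability of incorrect support recovery by a union bound over the $d$ coordinates, splitting the event $\{\hat S\neq S^*\}$ into a false-exclusion part (some $j\in S^*$ has $|[g_{\nu,n}]_j|<\tau$) and a false-inclusion part (some $j\notin S^*$ has $|[g_{\nu,n}]_j|\geq\tau$). For $j\notin S^*$ we have $[\nabla f_\nu(\theta)]_j=0$ by the sparsity part of Assumption~\ref{spars_assum}, so $[g_{\nu,n}]_j=\zeta_j$ is a zero-mean fluctuation and we need a concentration bound $\Pr\{|\zeta_j|\geq\tau\}\leq\epsilon/(2d)$. For $j\in S^*$, the minimal-signal-strength condition gives $|[\nabla f_\nu(\theta)]_j|\geq a - M\nu\sqrt{s}$ (the correction coming from $\|\nabla f_\nu(\theta)-\nabla f(\theta)\|\leq M\nu\sqrt{d}$ restricted to the support, controlled via condition \textbf{A2}); with the choice $\tau=(a-M\nu\sqrt{s})/2$ the bad event forces $|\zeta_j|\geq\tau$ as well, so the two cases reduce to the same tail estimate.

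The core of the argument is therefore a tail bound on $\zeta_j=[g_{\nu,n}]_j-[\nabla f_\nu(\theta)]_j$, the $j$-th coordinate of the zeroth-order gradient estimator's deviation. Writing $g_{\nu,n}=\frac1n\sum_{i=1}^n Y_i$ with $Y_i=\frac{f(\theta+\nu u_i)-f(\theta)}{\nu}u_i$ (exact evaluations, no noise), each $[Y_i]_j$ is an i.i.d.\ mean-$[\nabla f_\nu(\theta)]_j$ random variable. The summands are heavy-tailed — they involve a product of a (near-linear-in-$u_i$) finite-difference quotient with the Gaussian coordinate $[u_i]_j$, so roughly a product of sub-Gaussian/sub-exponential pieces — hence $[Y_i]_j$ is sub-Weibull of order $1/2$ (tail $\sim e^{-c\sqrt{t}}$), which is exactly what produces the $(\log(4d/\epsilon))^{3/2}$ and the fourth-power terms in the stated lower bound on $n$. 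I would invoke a Bernstein/Fuk–Nagaev-type inequality for sums of such sub-Weibull variables: the relevant moment/Orlicz-norm parameters are controlled by $R$ (via Assumption~\ref{queryassmp}, bounding $\|\nabla f(\theta)\|_2\leq R$, hence the size of the finite difference up to the $M\nu(d+3)^{3/2}$ smoothing bias from Lemma~\ref{lm:zogradvar}-type estimates) and by $\sqrt{s}$ (the effective dimension). This yields $\Pr\{|\zeta_j|\geq t\}\leq C'\exp(-c'\min\{ n t^2/(R^2 s), (nt/(R\sqrt{s}))^{2/3}, (nt/(R\sqrt s))^{?}\})$ or a cleaner three-regime bound; setting $t=\tau\gtrsim a/C\sqrt s$ (using $\nu\leq a/(2M\sqrt s)$ so $\tau\geq a/4$) and demanding each exponent exceed $\log(4d/\epsilon)$ gives the three terms in the max defining $n$, with the $(\cdot)^{3/2}$ and $(\cdot)^4$ exponents matching the sub-Weibull-$1/2$ tail.

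The main obstacle is establishing the correct sub-Weibull tail for $[Y_i]_j$ with explicit constants and verifying that the constraint $\nu\leq R/(MC_2\sqrt s)$ is exactly what is needed to keep the smoothing bias $\|\nabla f_\nu(\theta)-\nabla f(\theta)\|\lesssim M\nu(d+3)^{3/2}$ (or its support-restricted $\sqrt s$ analogue) from swamping the signal $a$ — in particular that on the relevant $s$ coordinates the truncated function is effectively $s$-dimensional so the smoothing error scales with $\sqrt s$ rather than $\sqrt d$. Once the per-coordinate tail is in hand, the union bound over $d$ coordinates and the choice of $\tau$ and $n$ close the argument routinely; I expect the bookkeeping of constants $C, C_2, K_1, K_2$ to be the only remaining tedium, and I would present the tail lemma as a standalone sub-Weibull concentration statement before assembling the final union bound.
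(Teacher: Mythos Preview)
Your plan is essentially the paper's proof: union bound over the $d$ coordinates, reduce both the false-inclusion and false-exclusion events to a tail bound on $\zeta_j$, and control that tail by writing $[g_{\nu,1}]_j=\phi(\nu,u)\,u_j$ with $\phi(\nu,u)=\nu^{-1}(f(\theta+\nu u)-f(\theta))$ and invoking a concentration inequality for i.i.d.\ sums of (sub-exponential)$\times$(sub-Gaussian) products. Two small corrections to your sketch. First, the product of the sub-exponential $\phi$ (the paper bounds $\|\phi\|_{\Psi_1}\le M\nu\|u\|_{\Psi_2}^2+\|\nabla f(\theta)\|\,\|u\|_{\Psi_2}$) with the sub-Gaussian $u_j$ is sub-Weibull of parameter $2/3$, not $1/2$; the tail is $e^{-c\,t^{2/3}}$, and it is precisely the $2/3$ exponent that produces the $(\log(4d/\epsilon))^{3/2}$ and the $(\cdot)^4$ thresholds on $n$ (the paper calls a dedicated lemma for sums of sub-Gaussian$\times$sub-exponential terms). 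Second, the constraint $\nu\le R/(MC_2\sqrt{s})$ is not there to control the smoothing bias; it is exactly the condition $M\nu\|u\|_{\Psi_2}\le R$ (with $\|u\|_{\Psi_2}\le C_2\sqrt{s}$ after restricting $u$ to the $s$ active coordinates via Assumption~\ref{spars_assum}) that lets you absorb the quadratic piece and conclude $\|\phi\|_{\Psi_1}\le 2R\,C_2\sqrt{s}$. The bias is handled by the \emph{other} constraint $\nu\le a/(2M\sqrt{s})$, which ensures $a'=a-M\nu\sqrt{s}\ge a/2$ and hence $\tau\ge a/4$.
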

\begin{remark}
The number of queries $n$ to the function $f$ depends only logarithmically on the dimension $d$ and is a (low-degree) polynomial in the sparsity level $s$. Combining this fact with the result in Theorem~\ref{thm:zlmc} we see that the total number of queries to the function $f$ (for the sampling error measured in 2-Wasserstein distance) is only poly-logarithmic in the true dimension $d$ and is a low-degree polynomial in the sparsity level $s$. Thus when $s \ll d$, we see the advantage of variable selection in black-box sampling using the two-step approach. The above results assumes that the sparsity level $s$ and signal strength is known. It would be interesting to construct adaptive estimators similar to those for Gaussian sequence model in~\cite{butucea2018variable}. Furthermore, exploring appropriately defined notions of non-homogenous sparsity assumptions is also challenging.
\end{remark}

\section{Discussion}
In this work, we proposed and analyzed zeroth-order discretizations of overdamped or underdamped Langevin diffusions. We provide a through analysis of the oracle complexity of such sampling algorithms under various noise models on the zeroth-order oracle and provide simulation results corroborating the theory. Recall that our zeroth-order gradient estimators used in this work were based on Gaussian Stein's identity and could be used for the case when $f$ is defined on the entire Euclidean space $\mathbb{R}^d$.In several situation, for example, in sampling from densities with compact support~\citep{brosse2017sampling, bubeck2018sampling} and in computing volume of convex body~\citep{brazitikos2014geometry}, one needs to compute the gradient of the function (and density) supported on $\mathcal{M} \subset \mathbb{R}^d$. For these situations, one can use a version of Stein's identity based on score functions to compute the gradient and Hessian. To explain more, we first recall some definitions. The score function $S_{p} \colon \mathcal{M} \rightarrow \mathbb{R}^d$ associated to density $p(u)$ defined over $\mathcal{M}$ is defined as  
\begin{align*}
S_p(u) = - \nabla_u  [\log p(u) ] = - \nabla _u p(u) / p(u).
\end{align*}
In the above definition,  the derivative is  taken with respect to the argument $u$ and not the parameters of the density $p(u)$. Based on the above definition, we have the following versions of Stein's identity; see, for example,~\citep{gorham2015measuring}.
\begin{proposition}\label{prop:1stein}
Let $U$ be a  $\mathcal{M}$-valued random vector with density $p(u)$. Assume that $p\colon \mathcal{M} \rightarrow \mathbb{R}$ is differentiable. In addition, let  $g : \mathcal{M} \to \mathbb{R}$ be a continuous function such that  $\E_U[\nabla g (U)]$ exists and the following is true: $\int_{u \in \mathcal{M}} \nabla_u \left( g(u) p(u)\right) du = 0$. Then it holds that 
$$ \E_U[ g(U) \cdot S(U) ]  = \E_U [ \nabla g(U) ] ,$$
where $S(u) = -  \nabla p(u) / p(u)$ is the score function of $p(u)$.
\end{proposition}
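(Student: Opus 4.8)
The plan is to derive the identity directly from the Leibniz (product) rule for the gradient, using the vanishing-integral hypothesis to dispose of what would otherwise be a boundary term. First I would expand the integrand in the assumption $\int_{u\in\mathcal{M}} \nabla_u\bigl(g(u)p(u)\bigr)\,du = 0$ coordinatewise via the product rule, $\nabla_u\bigl(g(u)p(u)\bigr) = p(u)\,\nabla g(u) + g(u)\,\nabla p(u)$, which holds wherever both $g$ and $p$ are differentiable. Integrating this identity over $\mathcal{M}$ and invoking the hypothesis yields $\int_{\mathcal{M}} p(u)\,\nabla g(u)\,du = -\int_{\mathcal{M}} g(u)\,\nabla p(u)\,du$.

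Next I would rewrite the right-hand side in terms of the score function. By definition $S(u) = -\nabla p(u)/p(u)$, so on $\{p>0\}$ we have $\nabla p(u) = -S(u)\,p(u)$, whence $-\int_{\mathcal{M}} g(u)\,\nabla p(u)\,du = \int_{\mathcal{M}} g(u)\,S(u)\,p(u)\,du$. Recognizing that integration against $p(u)\,du$ is expectation under $U$, the left-hand side is $\E_U[\nabla g(U)]$ and the right-hand side is $\E_U[g(U)\,S(U)]$, which is exactly the claimed identity.

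The items to check carefully — rather than genuine obstacles — are: (i) the Leibniz rule applies pointwise wherever $g$ is differentiable, so the continuity hypothesis on $g$ must be read together with the appearance of $\nabla g$, i.e.\ $g$ is (weakly) differentiable where needed; (ii) all integrals in play are finite, which follows from the assumed existence of $\E_U[\nabla g(U)]$ together with the vanishing-integral hypothesis; and (iii) the null set $\{p=0\}$ contributes nothing, since $U$ has density $p$ and therefore places no mass there, so dividing by $p$ is harmless inside the expectation. No interchange of limit and integral is required, because the hypothesis $\int_{\mathcal{M}} \nabla_u\bigl(g(u)p(u)\bigr)\,du = 0$ already encodes the decay condition that an integration-by-parts derivation would otherwise have to verify at the boundary of $\mathcal{M}$; this is the one substantive assumption, and once it is granted the proposition reduces to the short computation above.
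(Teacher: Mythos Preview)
Your argument is correct and is precisely the standard derivation of this Stein-type identity: expand $\nabla_u(g(u)p(u))$ by the product rule, invoke the vanishing-integral hypothesis, and rewrite $-\nabla p/p$ as the score. The paper does not actually supply its own proof of this proposition---it is stated in the discussion section with a citation to prior work---so there is nothing to compare against beyond noting that your approach is the canonical one.
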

In order to leverage the above identities to estimate the gradient of a given function $f(\theta): \mathcal{M} \to \mathbb{R}$, consider $g(U) = f(\theta +U)$ where $U\sim p(u)$ is a $\mathcal{M}$-valued random variable and appeal to the above Stein's identity above, as done in Section~\ref{sec:zolmc} for with Gaussian random variables.  A special case of the above idea, when the space $\mathcal{M}$ is a Riemannian sub-manifold embedded in an Euclidean space was considered in~\cite{li2020zeroth} in context of stochastic zeroth-order Riemannian optimization. We postpone a rigorous analysis of the estimation and approximation rates in the general setting, and their applications to black-box sampling on non-Euclidean spaces for future work.

\bibliographystyle{alpha}
\bibliography{sampzeroarxiv}
\begin{center}
\textbf{\large Stochastic Zeroth-order Discretizations of Langevin Diffusions for Bayesian Inference: Supplementary Material}
\end{center}
\section{Notations}\label{sec:notations}
We use $a \wedge b$ and $a \vee b$ to denote the minimum and maximum of $a$ and $b$ respectively. The $L_2$ norm of a random vector $X:\Omega\to\R^d$ is defined to be $\|X\|_{L_2} = \E[\|X\|_2^2]^{1\slash2}$. The $L_p$ norms of a random matrix $\boldsymbol{M}:\Omega\to\R^{d\times d}$ are defined as follows.
\begin{align*}
\|\boldsymbol{M}\|_{L_p,2} &= \E[\|\boldsymbol{M}\|_2^p]^{1\slash p}, \\
\|\boldsymbol{M}\|_{L_p,F} &= \E[\|\boldsymbol{M}\|_F^p]^{1\slash p},
\end{align*}
where $\|\cdot\|_2$ is the spectral norm, and $\|\cdot\|_F$ is the Frobenius norm. For simplicity, we write $\|\cdot\|=\|\cdot\|_2$ and $\|\cdot\|_{L_p}=\|\cdot\|_{L_p,\boldsymbol{\bullet}}$ when there is no ambiguity. Furthermore, we omit the subscript $h$ in $x_{t,h}$ in places where is no confusion for simplicity.

\section{Proofs for Section~\ref{sec:zolmc}}
\subsection{Proofs for Oracle Complexity of ZO-LMC}
\begin{proof}[of Theorem~\ref{thm:zlmc}]
The proof follows by first calculating the bias and variance of the gradient estimator in our zeroth-order setting, where the error term $\zeta_n = g_{\nu,b}(x_n)-\nabla f(x_n)$. First, by Stein's identity, $\E[g_{\nu,1}(x,u)] = \E[\nabla f(x+\nu u)]=\nabla f_\nu(x)$, where we denote $f_\nu(x) = \E[f(x+\nu u)]$. Under Assumption \ref{smooth_assum} on smoothness of $f$, in the case where $b=1$, we have the following calculation for the bias.
\begin{align*}
\|\E[\zeta_n\mid x_n]\|^2
= \|\E[\nabla f(x_n+\nu u)\mid x_n]-\nabla f(x_n)\|^2
\leq \E[(M\nu\|u\|)^2] \numberthis\label{eq:condexpeczeta}
\leq M^2\nu^2d.
\end{align*}

Next, for $b\geq1$ in general, $g_{\nu,b}(x) = \frac1b\sum_{k=1}^bg_{\nu,1}(x,u_k)$, the bias and variance could be calculated as follows. Specifically, for the bias, we have
\begin{align*}
\|\E[\zeta_n\mid x_n]\|^2
= \|\E[g_{\nu,b}(x_n)-\nabla f(x_n)\mid x_n]\|^2 
\leq \|\E[g_{\nu,1}(x_n)-\nabla f(x_n)\mid x_n]\|^2 
\leq M^2\nu^2d.
\end{align*}
From Lemma 2.1 of \cite{balasubramanian2018zeroth}, we have,
\begin{align*}
\E[\|\zeta_n-\E[\zeta_n\mid x_n]\|^2]
\leq \frac{\nu^2}{2b}M^2(d+3)^3+\frac{2(d+5)\left(\sigma^2+\|\nabla f(x_n)\|_{L_2}^2\right)}{b}.
\end{align*}

Next, we follow a similar framework to the proof of Theorem 4 in \cite{dalalyan2017user}, but with modifications to adapt to the variance that is not uniformly bounded. Recall that $\Delta_n = L_0 - x_n,\;\Delta_{t+1} = L_h - x_{t+1}$, where $L_n = L_0 - \int_0^T\nabla f(L_s)ds + \sqrt2W_n$ follows the Langevin diffusion with stationary distribution $\pi$. Moreover, $\|\Delta_n - hU\| = \|\Delta_n - h[\nabla f(x_n+\Delta_n) - \nabla f(x_n)]\|\leq(1-mh)\|\Delta_n\|,\; \|V\| = \|\int_0^h[\nabla f(L_s)-\nabla f(L_0)]ds\|\leq1.65M(h^3d)^{1\slash2}$. Thus,
\begin{align*}
\|\Delta_{n+1}\|_{L_2}
&= \|\Delta_n - hU - V + h\zeta_n\|_{L_2} \\
&\leq \{\|\Delta_n - hU\|_{L_2}^2 + h^2\|\zeta_n - \E[\zeta_n\mid x_n]\|_{L_2}^2\}^{1\slash2} + \|V\|_{L_2} + h\|\E[\zeta_n\mid x_n]\|_{L_2} \\
&\leq \left\{(1-mh)^2\|\Delta_n\|_{L_2}^2 + h^2\left(\frac{\nu^2}{2b}M^2(d+3)^3+\frac{2(d+5)\left(\sigma^2+\|\nabla f(x_n)\|_{L_2}^2\right)}{b}\right)\right\}^{1\slash2} \\
& + 1.65M(h^3d)^{1\slash2} + M\nu hd^{1\slash2} \\
&\leq \left\{(1-mh)^2\|\Delta_n\|_{L_2}^2 + h^2\left(\frac{\nu^2}{2b}M^2(d+3)^3+\frac{2(d+5)\left(\sigma^2+2M^2\|\Delta_n\|_{L_2}^2+2\|\nabla f(L_0)\|_{L_2}^2\right)}{b}\right)\right\}^{1\slash2} \\
&+ 1.65M(h^3d)^{1\slash2} + M\nu hd^{1\slash2} \\
&\leq \left\{(1-mh)^2\|\Delta_n\|_{L_2}^2 + h^2\left(\frac{\nu^2}{2b}M^2(d+3)^3+\frac{2(d+5)\left(\sigma^2+2Md\right)}{b}\right)\right\}^{1\slash2}\\
&+ \frac{4M^2h^2(d+5)}{b(1-mh)}\|\Delta_n\|_{L_2} 
 + 1.65M(h^3d)^{1\slash2} + M\nu hd^{1\slash2}\\
&\leq \left\{(1-mh)^2\|\Delta_n\|_{L_2}^2 + h^2\left(\frac{\nu^2}{2b}M^2(d+3)^3+\frac{2(d+5)\left(\sigma^2+2Md\right)}{b}\right)\right\}^{1\slash2}\\
&+ \frac{mh}{2}\|\Delta_n\|_{L_2} 
+ 1.65M(h^3d)^{1\slash2} + M\nu hd^{1\slash2}.
\end{align*}
Here we use the fact that $\sqrt{a^2+b+c}\leq\sqrt{a^2+b}+\frac{c}{2a}$, $\E[\|\nabla f(L)\|^2]\leq Md$, and that we choose $h$, and $b$ such that $h/\left(b(1-mh)\right)\leq m/(8M^2(d+5))$. By Lemma 9 in \cite{dalalyan2017user}, the above inequality leads to
\begin{align*}
\|\Delta_n\|_{L_2}
&\leq (1-0.5mh)^n\|\Delta_0\|_{L_2} + \frac{3.3M\sqrt{hd}}{m}+\frac{2\nu M\sqrt{d}}{m}+\frac{\nu M\sqrt{h}}{2\sqrt{mb}}(d+3)^\frac{3}{2}+\frac{3\sqrt{h(d+5)(\sigma^2+2Md)}}{\sqrt{mb}}.
\end{align*}
Therefore, using the fact $W_2(\varpi_{n+1},\pi)\leq \|\Delta_{n+1}\|_{L_2}$, and $W_2(\varpi_{0},\pi)= \|\Delta_{0}\|_{L_2}$, we obtain the bound in Wasserstein distance.
\begin{align*}
W_2(\varpi_n,\pi)
\leq & (1-0.5mh)^nW_2(\varpi_0,\pi)  + \frac{3.3M\sqrt{hd}}{m}+\frac{2\nu M\sqrt{d}}{m}+\frac{\nu M\sqrt{h}}{2\sqrt{mb}}(d+3)^\frac{3}{2}+\frac{3\sqrt{h(d+5)(\sigma^2+2Md)}}{\sqrt{mb}}.\numberthis\label{eq:lmcW2recfinal}
\end{align*}
Choosing $h$, $b$, $\nu$, and $N$ as in \eqref{eq:lmcparamchoice}, and \eqref{eq:LMCNbound} we have $W_2(\varpi_N,\pi)\leq \epsilon$. 
\end{proof}

\subsection{Proofs for Oracle Complexity of ZO-KMLC}

\begin{proof}[of Theorem~\ref{thm:KLMCthms}]
	Let $(V_{n,n}, L_{n,t}),\;t\in[0,h]$ be a stationary kinetic Langevin process for each $n\in\mathbb{N}$, i.e.,
	\begin{align*}
	dV_{n,t} &= -(\gamma V_{n,t} + \nabla f(L_{n,t})) dt + \sqrt{2\gamma}dW_{n,t}, \\
	dL_{n,t} &= V_{n,t} dt,
	\end{align*}
	starting from $V_{0,0}\sim N(0,\boldsymbol{I}_d),\; L_{0,0}\sim\pi$, and satisfying $V_{n,h}=V_{n+1,0},L_{n,h}=L_{n+1,h}$. Define $(\tilde V_{n,t},\tilde L_{n,t})$ by the following discretized version of kinetic Langevin diffusion,
	\begin{align*}
	d\tilde V_{n,t} &= -(\gamma\tilde V_{n,t} + g(\tilde L_{n,0})) dt + \sqrt{2\gamma}dW_{n,t}, \\
	d\tilde L_{n,t} &= \tilde V_{n,t} dt,
	\end{align*}
	or equivalently,
	\begin{align*}
	\tilde V_{n,t} &= e^{-\gamma t}\tilde V_{n,0} - \int_0^te^{-\gamma(t-s)}ds\cdot g(\tilde L_{n,0}) + \sqrt{2\gamma}\int_0^te^{-\gamma(t-s)}dW_{n,t}, \\
	\tilde L_{n,t} &= \tilde L_{n,0} + \int_0^t\tilde V_{n,s}ds. 
	\end{align*}
	Define a different kinetic Langevin process $(\hat V_{n,t},\hat L_{n,t})$ with initial condition $\hat V_{n,0}=\tilde V_{n,0},\hat L_{n,0}=\tilde L_{n,0}$, i.e.,
	\begin{align*}
	d\hat{V}_{n,t} &= -(\gamma \hat{V}_{n,t} + \nabla f(\hat{L}_{n,t})) dt + \sqrt{2\gamma}dW_{n,t}, \\
	d\hat{L}_{n,t} &= \hat{V}_{n,t} dt
	\end{align*}
	Assume that $(\tilde V_{0,0},\tilde L_{0,0})$ is chosen such that $\tilde V_{0,0}=V_{0,0}$ and $W_2(\varpi_0,\pi)=\|\tilde L_{0,0}-L_{0,0}\|_{L_2}$. By definition of Wasserstein distance, we have $W_2(\varpi_n,\pi)\leq\|\tilde L_{n,0}-L_{n,0}\|_{L_2}$.\\
	Now we denote $e_n = \left\|\boldsymbol{P}^{-1}\begin{bmatrix}\tilde V_{n,0}-V_{n,0}\\\tilde L_{n,0}-L_{n,0}\end{bmatrix}\right\|_{L_2}$, where $\boldsymbol{P}^{-1}=\begin{bmatrix}\boldsymbol{I}_d&\gamma\boldsymbol{I}_d\\-\boldsymbol{I}_d&\boldsymbol{0}\end{bmatrix},\; \boldsymbol{P}=\gamma^{-1}\begin{bmatrix}0&-\gamma\boldsymbol{I}_d\\\boldsymbol{I}_d&\boldsymbol{I}_d\end{bmatrix}$ corresponds to the contraction to the kinetic Langevin process. See \cite{dalalyan2018sampling}. Note that $\|\tilde L_{n,0}-L_{n,0}\|_{L_2}\leq\sqrt2\gamma^{-1}e_n$ and $\|\tilde V_{n,0}-V_{n,0}\|_{L_2}\leq e_n$. 
	Observe that,
	\begin{align*}
	\tilde{V}_{n,h}-\hat{V}_{n,h}=&\int_{0}^{h}e^{-\gamma (h-s)}\left(\nabla f(\hat{L}_{n,s})-g_{\nu,b}(\hat{L}_{n,0})\right)ds\\
	\leq & \int_{0}^{h}e^{-\gamma (h-s)}\left(\nabla f(\hat{L}_{n,s})-\nabla f(\hat{L}_{n,0})+\nabla f(\hat{L}_{n,0})-g_{\nu,b}(\hat{L}_{n,0})\right)ds\\
	\leq &\int_{0}^{h}e^{-\gamma (h-s)}\left(\nabla f(\hat{L}_{n,s})-\nabla f(\hat{L}_{n,0})-\hat{\zeta}_{n,0}+\expec{\hat{\zeta}_{n,0}|\hat{L}_{n,0}}-\expec{\hat{\zeta}_{n,0}|\hat{L}_{n,0}}\right)ds\\
	\leq &\underbrace{\int_{0}^{h}e^{-\gamma (h-s)}\left(\nabla f(\hat{L}_{n,s})-\nabla f(\hat{L}_{n,0})\right)ds}_{A_1}-\underbrace{\int_{0}^{h}e^{-\gamma (h-s)}\left(\hat{\zeta}_{n,0}-\expec{\hat{\zeta}_{n,0}|\hat{L}_{n,0}}\right)ds}_{A_2}\\
	-&\underbrace{\int_{0}^{h}e^{-\gamma (h-s)}\left(\expec{\hat{\zeta}_{n,0}|\hat{L}_{n,0}}\right)ds}_{A_3}\numberthis\label{eq:klmcvtilvhatdiffexpansion}
	\end{align*}
	Similarly,
	\begin{align*}
	\tilde{L}_{n,h}-\hat{L}_{n,h}\leq &\underbrace{\int_{0}^{h}\int_{0}^{s}e^{-\gamma (s-u)}\left(\nabla f(\hat{L}_{n,u})-\nabla f(\hat{L}_{n,0})\right)duds}_{B_1}-\underbrace{\int_{0}^{h}\int_{0}^{s}e^{-\gamma (s-u)}\left(\hat{\zeta}_{n,0}-\expec{\hat{\zeta}_{n,0}|\hat{L}_{n,0}}\right)duds}_{B_2}\\
	-&\underbrace{\int_{0}^{h}\int_{0}^{s}e^{-\gamma (s-u)}\left(\expec{\hat{\zeta}_{n,0}|\hat{L}_{n,0}}\right)duds}_{B_3}\numberthis\label{eq:klmcltillhatdiffexpansion}
	\end{align*}
	Combining \eqref{eq:klmcvtilvhatdiffexpansion}, and \eqref{eq:klmcltillhatdiffexpansion}, we have
	\begin{align*}
	e_{n+1}
	=& \left\|\boldsymbol{P}^{-1}\begin{bmatrix}\tilde V_{n,h}-V_{n,h}\\\tilde L_{n,h}-L_{n,h}\end{bmatrix}\right\|_{L_2} \\
	\leq & \left\|\boldsymbol{P}^{-1}\begin{bmatrix} A_1-A_2-A_3\\ B_1-B_2-B_3\end{bmatrix}+\boldsymbol{P}^{-1}\begin{bmatrix}\hat V_{n,h}-V_{n,h}\\\hat L_{n,h}-L_{n,h}\end{bmatrix}\right\|_{L_2} \\
	\leq &\left\|\boldsymbol{P}^{-1}\begin{bmatrix}\hat V_{n,h}-V_{n,h}\\\hat L_{n,h}-L_{n,h}\end{bmatrix}-\boldsymbol{P}^{-1}\begin{bmatrix} A_2\\B_2\end{bmatrix}\right\|_{L_2}+\left\|\boldsymbol{P}^{-1}\begin{bmatrix} A_1\\B_1\end{bmatrix}\right\|_{L_2}+\left\|\boldsymbol{P}^{-1}\begin{bmatrix}  A_3\\B_3\end{bmatrix}\right\|_{L_2} \numberthis\label{eq:en1intermediateupperbound}
	\end{align*}
	Now we will upper bound the above three terms. 
	Observe that,
	\begin{align*}
	&\|A_1\|_{L_2}=\left\|\int_0^h e^{-\gamma(h-s)}(\nabla f(\hat L_{n,s})-\nabla f(\hat L_{n,0}))ds\right\|_{L_2} 
	\leq M\int_0^h \|\hat L_{n,s}-\hat L_{n,0}\|_{L_2} ds \\
	\leq& M\int_0^h\int_0^s\|\hat V_{n,u}\|_{L_2}duds 
	\leq \frac12Mh^2\max_{u\in[0,h]}\|\hat V_{n,u}\|_{L_2}. \numberthis\label{eq:A1sizebound}
	\end{align*}
	\begin{align*}
	\|B_1\|_{L_2}=\left\|\int_0^h \int_0^s e^{-\gamma(s-u)}(\nabla f(\hat L_{n,u})-\nabla f(\hat L_{n,0}))ds\right\|_{L_2}
	\leq \frac16Mh^3\max_{u\in[0,h]}\|\hat V_{n,u}\|_{L_2}. \numberthis\label{eq:B1sizebound}
	\end{align*}
	So, combining \eqref{eq:A1sizebound}, and \eqref{eq:B1sizebound}, and using the fact $\|\hat V_{n,u}\|_{L_2} \leq \|V_{n,u}\|_{L_2}+\|\hat V_{n,u}-V_{n,u}\|_{L_2} \leq \sqrt{d}+e_n$, and choosing $h\leq \sqrt{2}/(10\gamma)$, we obtain
	\begin{align*}
	\left\|\boldsymbol{P}^{-1}\begin{bmatrix} A_1\\B_1\end{bmatrix}\right\|_{L_2}\leq \sqrt{3}\|A_1\|_{L_2}+\sqrt{2}\gamma \|B_1\|_{L_2}\leq \frac{1}{2}Mh^2\left(\sqrt{3}+\frac{\sqrt{2}\gamma h}{3}\right)(\sqrt{d}+e_n)\leq Mh^3(\sqrt{d}+e_n).\numberthis\label{eq:pinva1b1bound}
	\end{align*}
	Using \eqref{eq:condexpeczeta} we have
	\begin{align*}
	\left\|\boldsymbol{P}^{-1}\begin{bmatrix} A_3\\B_3\end{bmatrix}\right\|_{L_2}\leq \sqrt{3}\|A_3\|_{L_2}+\sqrt{2}\gamma \|B_3\|_{L_2}\leq \left(\sqrt{3}h+\frac{\sqrt{2}\gamma h^2}{2}\right)M\nu \sqrt{d}\leq 2Mh\nu\sqrt{d}, \numberthis\label{eq:pinva3b3bound}
	\end{align*}
	and
	\begin{align*}
	&\left\|\boldsymbol{P}^{-1}\begin{bmatrix}\hat V_{n,h}-V_{n,h}\\\hat L_{n,h}-L_{n,h}\end{bmatrix}-\boldsymbol{P}^{-1}\begin{bmatrix} A_2\\B_2\end{bmatrix}\right\|_{L_2}^2\\
	=& \left\|\boldsymbol{P}^{-1}\begin{bmatrix}\hat V_{n,h}-V_{n,h}\\\hat L_{n,h}-L_{n,h}\end{bmatrix}\right\|_{L_2}^2+\left\|\boldsymbol{P}^{-1}\begin{bmatrix} A_2\\B_2\end{bmatrix}\right\|_{L_2}^2
	-2\expec{\begin{bmatrix}\hat V_{n,h}-V_{n,h}\\\hat L_{n,h}-L_{n,h}\end{bmatrix}^\top\begin{bmatrix}2\boldsymbol{I}_d&\gamma \boldsymbol{I}_d\\\gamma \boldsymbol{I}_d & \gamma^2\boldsymbol{I}_d\end{bmatrix}\begin{bmatrix} A_2\\B_2\end{bmatrix}}.
	\end{align*}
	Note that,
	\begin{align*}
	\left\|\boldsymbol{P}^{-1}\begin{bmatrix}\hat V_{n,t}-V_{n,t}\\\hat L_{n,t}-L_{n,t}\end{bmatrix}\right\|_{L_2} 
	\leq e^{-mt\slash\gamma} \left\|\boldsymbol{P}^{-1}\begin{bmatrix}\hat V_{n,0}-V_{n,0}\\\hat L_{n,0}-L_{n,0}\end{bmatrix}\right\|_{L_2}= e^{-mt\slash\gamma}e_n.\numberthis\label{eq:contklmccontrac}
	\end{align*}
	Using Lemma~\ref{lm:zogradvar}, we also have, 
	\begin{align*}
	\|A_2\|^2_{L_2}= &\left\|\int_{0}^{h}e^{-\gamma (h-s)}\left(\hat{\zeta}_{n,0}-\expec{\hat{\zeta}_{n,0}|\hat{L}_{n,0}}\right)ds\right\|_{L_2}^2=\frac{\left(1-e^{-\gamma h}\right)^2}{\gamma^2} \left\|\hat{\zeta}_{n,0}-\expec{\hat{\zeta}_{n,0}|\hat{L}_{n,0}}\right\|_{L_2}^2\\
	\leq& h^2\left\|g_{\nu,b}(\hat L_{n,0})-\nabla f_\nu(\hat L_{n,0})\right\|_{L_2}^2\leq \frac{2h^2(d+5)(\|\nabla f(\hat L_{n,0})\|_{L_2}^2+\sigma^2)}{b}+\frac{h^2\nu^2 M^2(d+3)^3}{2b}\\
	\leq & \frac{2h^2(d+5)(2\|\nabla f(\hat L_{n,0})-\nabla f( L_{n,0})\|_{L_2}^2+2\|\nabla f( L_{n,0})\|_{L_2}^2+\sigma^2)}{b}+\frac{h^2\nu^2 M^2(d+3)^3}{2b}\\
	\leq & \frac{2h^2(d+5)(2M^2\|\hat L_{n,0}- L_{n,0}\|_{L_2}^2+2\|\nabla f( L_{n,0})\|_{L_2}^2+\sigma^2)}{b}+\frac{h^2\nu^2 M^2(d+3)^3}{2b}.
	\end{align*}
	Using the fact that $\|\hat L_{n,0}-L_{n,0}\|_{L_2}^2\leq2\gamma^{-2}e_n^2$, and $\|\nabla f( L_{n,0})\|_{L_2}^2\leq Md$, we hence obtain
	\begin{align*}
	\|A_2\|^2_{L_2}\leq \frac{8M^2h^2(d+5)}{b\gamma^2}e_n^2+ h^2A_4
	\numberthis\label{eq:A2sizebound}
	\end{align*}
	where $A_4=\frac{2(d+5)(2Md+\sigma^2)}{b}+\frac{\nu^2 M^2(d+3)^3}{2b}$. Similarly, we have
	\begin{align*}
	\|B_2\|^2_{L_2}\leq \frac{2M^2h^4(d+5)}{b\gamma^2}e_n^2+ \frac{h^4}{4}A_4\numberthis\label{eq:B2sizebound}
	\end{align*}
	So, using \eqref{eq:A2sizebound}, and \eqref{eq:B2sizebound}, we have
	\begin{align*}
	\left\|\boldsymbol{P}^{-1}\begin{bmatrix} A_2\\B_2\end{bmatrix}\right\|_{L_2}^2
	\leq 3\|A_2\|_{L_2}^2 +2\gamma^2\|B_2\|_{L_2}^2 \leq \left(3h^2+\frac{\gamma^2h^4}{2}\right)\left(\frac{8M^2(d+5)}{b\gamma^2}e_n^2+A_4\right).\numberthis\label{eq:pinva2b2bound}
	\end{align*}
	Now using \eqref{eq:contklmccontrac}, \eqref{eq:pinva2b2bound}, and using the facts that, $\expec{A_2|\hat L_{n,0}}=0$, $\expec{B_2|\hat L_{n,0}}=0$, $\hat V_{n,h}-V_{n,h}$ is independent of $A_2,B_2$ given $\hat L_{n,0}$, and $\hat L_{n,h}-L_{n,h}$ is independent of $A_2,B_2$ given $\hat L_{n,0}$, we get
	\begin{align*}
	&\left\|\boldsymbol{P}^{-1}\begin{bmatrix}\hat V_{n,h}-V_{n,h}\\\hat L_{n,h}-L_{n,h}\end{bmatrix}-\boldsymbol{P}^{-1}\begin{bmatrix} A_2\\B_2\end{bmatrix}\right\|_{L_2}\\
	\leq &\left[ \left(8M^2\left(3h^2+\frac{\gamma^2h^4}{2}\right)\frac{d+5}{b\gamma^2}+e^{-\frac{2mh}{\gamma}}\right)e_n^2+4h^2A_4\right]^\frac{1}{2}\\
	\leq & \left[ \left(\frac{32M^2h^2(d+5)}{b\gamma^2}+e^{-\frac{2mh}{\gamma}}\right)e_n^2+4h^2A_4\right]^\frac{1}{2}\\
	\leq & \left[ \left(\frac{32M^2h^2(d+5)}{b\gamma^2}+\left(1-\frac{mh}{2\gamma}\right)^2\right)e_n^2+4h^2A_4\right]^\frac{1}{2}\\
	\leq & \left[ \left(1-\frac{mh}{4\gamma}\right)^2e_n^2+4h^2A_4\right]^\frac{1}{2}.\numberthis\label{eq:recursionfirsttermbound}
	\end{align*}
	The second inequality follows as $h\leq \sqrt{2}/(10\gamma)$, the third inequality follows if we choose $h\leq \min(\gamma/m,\sqrt{2}/(10\gamma))$, and the last inequality follows if we choose $b\geq\frac{512M^2(d+5)}{3m^2}$.
	Combining, \eqref{eq:en1intermediateupperbound}, \eqref{eq:pinva1b1bound}, \eqref{eq:pinva3b3bound}, and \eqref{eq:recursionfirsttermbound}, we get
	\begin{align*}
	e_{n+1}\leq \left[ \left(1-\frac{mh}{4\gamma}\right)^2e_n^2+4h^2A_4\right]^\frac{1}{2}+Mh^3e_n+Mh^3\sqrt{d}+2Mh\nu\sqrt{d}.
	\end{align*}
	Using Lemma 9 of \cite{dalalyan2017user}, and choosing $h\leq \min(\gamma/m,m/(12\gamma M))$ we have $mh/(4\gamma)-3Mh^2/2\geq mh/(8\gamma)$, and thus
	\begin{align*}
	&e_{n+1}\leq \left(1-\frac{mh}{8\gamma}\right)^{n+1}e_0+\frac{12M\gamma h\sqrt{d}}{m}+\frac{16M\nu \gamma \sqrt{d}}{m}+\frac{2h\sqrt{A_4}}{\sqrt{\frac{mh}{8\gamma}\left(2-\frac{mh}{4\gamma}-\frac{3Mh^2}{2}\right)}}\\
	\leq & \left(1-\frac{mh}{8\gamma}\right)^{n+1}e_0+\frac{12M\gamma h\sqrt{d}}{m}+\frac{16M\nu \gamma \sqrt{d}}{m}
	+\frac{4\sqrt{h}}{\sqrt{m}}\left(\frac{\sqrt{2(d+5)}(\sqrt{2Md}+\sigma)}{\sqrt{b}}+\frac{\nu M(d+3)^\frac{3}{2}}{\sqrt{2b}}\right)
	\end{align*}
	Then we obtain 
	\begin{align*}
	&W_2(\varpi_n,\pi)\\
	&\leq \sqrt2\gamma^{-1}e_n \\
	&\leq \sqrt2\gamma^{-1}\left(1-\frac{mh}{8\gamma}\right)^{n+1}W_2(\varpi_0,\pi)+\frac{24Mh\sqrt{d}}{m}+\frac{32M\nu  \sqrt{d}}{m}\\
	&+\frac{4\sqrt{h}}{\gamma\sqrt{m}}\left(\frac{2\sqrt{(d+5)}(\sqrt{2Md}+\sigma)}{\sqrt{b}}+\frac{\nu M(d+3)^\frac{3}{2}}{\sqrt{b}}\right) 
	\end{align*}
	Now, choosing $h$, $\nu$, $b$, and $N$ as in \eqref{eq:klmcparchoice}, we get \eqref{eq:klmcNbound}, and \eqref{eq:klmcoraclebound}.
\end{proof}

\subsection{Proofs for Oracle Complexity of ZO-RMP}

Before proceeding, we also recall that $\left(x_n^*(t),v_n^*(t)\right)$ when $t\in[0,h]$ is the true solution to the underdamped Langevin diffusion with the initial point $(x_n,v_n)$ coupled with $x_\nh$ through a shared Brownian motion defined as follows: 
\begin{align}
&x_n^*(t)=x_n+\frac{1-e^{-2t}}{2}v_n-\frac{u}{2}\int_0^t\left(1-e^{-2(t-s)}\right)\nabla f(x_n^*(s))ds+\sqrt{u}\int_0^t\left(1-e^{-2(t-s)}\right)dB_s \label{eq:xnstartdef}\\
&v_n^*(t)=v_ne^{-2t}-u\left(\int_0^te^{-2(t-s)}\nabla f(x_n^*(s))ds\right)+2\sqrt{u}\int_0^te^{-2(t-s)}dB_s
\label{eq:vnstartdef}.
\end{align}
We also recall some preliminary results from~\cite{shen2019randomized}.

\begin{lemma}[Lemma 6\cite{shen2019randomized}]\label{lm:contdiffprops}
	Let $\{x(t)\}_{t\in[0,h]}$, and $\{v(t)\}_{t\in[0,h]}$ be the true solution to the underdamped Langevin diffusion \eqref{eq:xnstartdef}, and \eqref{eq:vnstartdef} on ${t\in[0,h]}$. Then for $h\leq 1/20$, and $u=1/M$, we have
	\begin{align}
	&\expec{\sup_{t\in [0,h]}\|x(0)-x(t)\|^2}\leq O\left(h^2\|v(0)\|^2+u^2h^4\|\nabla f(x(0))\|^2+udh^3\right)\label{eq:supx0xtdiff}\\
	&\expec{\sup_{t\in[0,h]}\|\nabla f(x_t)\|^2}\leq O(\|\nabla f(x(0))\|^2+M^2h^2\|v(0)\|^2+Mdh^3) \label{eq:supgradsize}\\
	&\expec{\sup_{t\in[0,h]}\|v(t)\|^2}\leq O(\|v(0)\|^2+u^2h^2\|\nabla f(x(0))\|^2+udh)\label{eq:supvsize}
	\end{align}
\end{lemma}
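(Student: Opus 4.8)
\textbf{Proof plan for Lemma~\ref{lm:contdiffprops}.} The plan is to derive all three bounds from the variation-of-constants representations \eqref{eq:xnstartdef}--\eqref{eq:vnstartdef} via a single coupled a priori estimate, treating every expectation as conditional on the initial pair $(x(0),v(0))$. Introduce
\begin{align*}
P=\expec{\sup_{t\in[0,h]}\|x(t)-x(0)\|^2},\qquad Q=\expec{\sup_{t\in[0,h]}\|\nabla f(x(t))\|^2},\qquad S=\expec{\sup_{t\in[0,h]}\|v(t)\|^2}.
\end{align*}
First I would record the elementary bounds $1-e^{-2r}\le 2r$ and $e^{-2r}\le1$ for $r\ge0$, and rewrite the stochastic integrals as discounted martingales: $\int_0^t e^{-2(t-s)}dB_s=e^{-2t}M_t$ with $M_t=\int_0^t e^{2s}dB_s$ a continuous martingale, and, using $1-e^{-2(t-s)}=\int_s^t 2e^{-2(r-s)}dr$ together with stochastic Fubini, $\int_0^t(1-e^{-2(t-s)})dB_s=2\int_0^t e^{-2r}M_r\,dr$. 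Doob's $L^2$ maximal inequality plus It\^o's isometry give $\expec{\sup_{t\le h}\|M_t\|^2}\le 4\expec{\|M_h\|^2}=d(e^{4h}-1)=O(dh)$ for $h\le1/20$; hence $\expec{\sup_{t\le h}\|e^{-2t}M_t\|^2}=O(dh)$, while $\expec{\sup_{t\le h}\|\int_0^t(1-e^{-2(t-s)})dB_s\|^2}\le 4h^2\expec{\sup_{r\le h}\|M_r\|^2}=O(dh^3)$. Extracting this extra $h^2$ — i.e. pulling the ``$1-e^{-2(t-s)}=O(h)$'' smallness out of the integrand \emph{before} applying Doob — is what produces the $udh^3$ term in \eqref{eq:supx0xtdiff}, and getting these $h$-powers right is the only genuinely delicate point.

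Next, apply $\|a+b+c\|^2\le 3(\|a\|^2+\|b\|^2+\|c\|^2)$ term-by-term to \eqref{eq:xnstartdef} and \eqref{eq:vnstartdef} after taking $\sup_t$. For $x(t)-x(0)$: the $v(0)$-term contributes $\le h^2\|v(0)\|^2$; the drift term, bounded by $1-e^{-2(t-s)}\le 2h$ and $\int_0^h\|\nabla f(x(s))\|\,ds\le h\sup_s\|\nabla f(x(s))\|$, contributes $\le Cu^2h^4Q$; the noise term contributes $O(udh^3)$ by the previous paragraph. For $v(t)$: the $v(0)$-term is $\le\|v(0)\|^2$, the drift term $\le Cu^2h^2Q$, and the noise term $O(udh)$. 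Together with $M$-smoothness, $\sup_t\|\nabla f(x(t))\|^2\le 2\|\nabla f(x(0))\|^2+2M^2\sup_t\|x(t)-x(0)\|^2$, this gives the system
\begin{align*}
P\le 3h^2\|v(0)\|^2+Cu^2h^4Q+O(udh^3),\qquad S\le 3\|v(0)\|^2+Cu^2h^2Q+O(udh),\qquad Q\le 2\|\nabla f(x(0))\|^2+2M^2P.
\end{align*}

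The final step decouples $P$ and $Q$. Substituting the last inequality into the first yields $P(1-2CM^2u^2h^4)\le 3h^2\|v(0)\|^2+2Cu^2h^4\|\nabla f(x(0))\|^2+O(udh^3)$; since $u=1/M$ makes $M^2u^2=1$ and $h\le1/20$ makes $2Ch^4<1/2$, the coefficient $1-2CM^2u^2h^4\ge 1/2$, so
\begin{align*}
P=O\!\left(h^2\|v(0)\|^2+u^2h^4\|\nabla f(x(0))\|^2+udh^3\right),
\end{align*}
which is \eqref{eq:supx0xtdiff}. Inserting this into $Q\le 2\|\nabla f(x(0))\|^2+2M^2P$ and using $u=1/M$ (so $M^2u^2=1$, $M^2u=M$, and $h^4\|\nabla f(x(0))\|^2\le\|\nabla f(x(0))\|^2$) gives \eqref{eq:supgradsize}; inserting it into the bound for $S$ and absorbing the subdominant terms via $u=1/M$, $h\le1/20$ gives \eqref{eq:supvsize}. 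The main obstacle throughout is the careful bookkeeping of the stochastic-integral suprema — in particular the $O(dh^3)$ bound for the $x$-noise term and the verification that the $P$--$Q$ feedback coefficient is a genuine contraction under $u=1/M$, $h\le1/20$; the remaining estimates are routine Cauchy--Schwarz/Jensen arguments.
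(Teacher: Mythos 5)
Your proposal is correct. Note, however, that the paper does not prove this statement at all: Lemma~\ref{lm:contdiffprops} is imported verbatim as Lemma~6 of \cite{shen2019randomized}, so there is no internal proof to compare against; what you have written is a self-contained derivation along essentially the same lines as the original source (explicit variation-of-constants representation, maximal-inequality control of the Gaussian integrals, Lipschitz bootstrap of $\sup_t\|\nabla f(x(t))\|^2$ through $\sup_t\|x(t)-x(0)\|^2$). Your key technical steps check out: the identity $\int_0^t(1-e^{-2(t-s)})dB_s=2\int_0^t e^{-2r}M_r\,dr$ is valid (it follows from $Y_t=B_t-e^{-2t}M_t$ and $dY_t=2e^{-2t}M_t\,dt$, or by stochastic Fubini with a deterministic integrand), and it is indeed the right way to extract the extra factor $h^2$ that yields the $udh^3$ term in \eqref{eq:supx0xtdiff}; Doob plus It\^o's isometry give $\expec{\sup_{t\le h}\|M_t\|^2}=O(dh)$ as you claim; and the closing bookkeeping with $u=1/M$, $h\le 1/20$ reproduces \eqref{eq:supgradsize} and \eqref{eq:supvsize} from the $P$--$Q$--$S$ system. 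Two small points to make the write-up airtight: (i) the absorption step $P(1-2CM^2u^2h^4)\le\cdots$ requires knowing a priori that $P<\infty$, which should be stated explicitly (it follows from standard moment bounds for SDEs with globally Lipschitz drift, i.e.\ condition \textbf{A2}); (ii) since $\|v(0)\|^2$ and $\|\nabla f(x(0))\|^2$ appear outside expectations on the right-hand side, you should say once that all expectations are conditional on $(x(0),v(0))$ (as you do in your opening sentence), which is also how the lemma is subsequently applied in Lemmas~\ref{lm:graddiffxnhalf}--\ref{lm:lemma2eq}.
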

\begin{lemma}\label{lm:graddiffxnhalf}
	Let $\alpha_n$ be sampled uniformly randomly from $[0,1]$ at iteration $n$. Let $x_\nh$ be the intermediate value at step $n$. Let $\{x_n^*(t)\}_{t\in[0,h]}$ be the true solution to  \eqref{eq:xnstartdef}, and \eqref{eq:vnstartdef} with the initial point $x_n^*(0)=x_n$ coupled to $x_\nh$ through a  shared Brownian motion. Then, under Assumption~\ref{as:zo} and Assumption~\ref{smooth_assum}, for $h\leq 1/20$, we have
	\begin{align*}
	\expec{\|\nabla f(x_\nh)-\nabla f(x_{n}^*(\alpha h))\|^2}
	\leq& O\bigg(M^2h^6\expec{\|v_n\|^2}+(h^8+h^{7}\kappa^{-1})\expec{\|\nabla f(x_n)\|^2}\\
	+&Mdh^7+h^{7}\kappa^{-1}\sigma^2+h^{8}\bigg).\numberthis\label{eq:graddiffxnhalf}
	\end{align*}
\end{lemma}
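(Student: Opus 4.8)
The plan is to reduce everything to a bound on $\expec{\|x_\nh - x_n^*(\alpha_n h)\|^2}$ via the smoothness assumption \textbf{A2}, and then exploit the integral representation of both the true dynamics and the randomized-midpoint update. First I would use the coupling from Appendix~A of~\cite{shen2019randomized}: the Gaussian vector $\epsilon^{(1)}_{n+1}$ appearing in \eqref{eq:rmpupdatexnh} is generated so that $\sqrt u\,\epsilon^{(1)}_{n+1} = \sqrt u\int_0^{\alpha_n h}(1-e^{-2(\alpha_n h - s)})\,dB_s$, with $B$ the same Brownian motion driving \eqref{eq:xnstartdef}. Since $\int_0^{t}(1-e^{-2(t-s)})\,ds = t - \tfrac{1-e^{-2t}}{2}$, the stochastic integrals and the $v_n$-terms cancel, and with $t=\alpha_n h$ one gets the clean identity
\begin{align*}
x_\nh - x_n^*(\alpha_n h) = \frac{u}{2}\int_0^{\alpha_n h}\bigl(1-e^{-2(\alpha_n h - s)}\bigr)\bigl(\nabla f(x_n^*(s)) - g_{\nu,b}(x_n)\bigr)\,ds.
\end{align*}

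Next I would apply Cauchy--Schwarz together with $1-e^{-2(t-s)}\le 2(t-s)$, giving $\int_0^{\alpha_n h}(1-e^{-2(\alpha_n h-s)})^2\,ds \le \tfrac43 (\alpha_n h)^3 \le \tfrac43 h^3$, so that uniformly in $\alpha_n\in[0,1]$,
\begin{align*}
\|x_\nh - x_n^*(\alpha_n h)\|^2 \le \frac{u^2 h^3}{3}\int_0^{h}\|\nabla f(x_n^*(s)) - g_{\nu,b}(x_n)\|^2\,ds,
\end{align*}
where the right-hand side no longer depends on $\alpha_n$, so averaging over $\alpha_n$ is free. I would then split $\nabla f(x_n^*(s)) - g_{\nu,b}(x_n)$ into $(\nabla f(x_n^*(s)) - \nabla f(x_n))$ and $(\nabla f(x_n) - g_{\nu,b}(x_n))$, bound the first by $M\|x_n^*(s)-x_n\|$ (assumption \textbf{A2}) and control $\expec{\sup_{s\in[0,h]}\|x_n^*(s)-x_n\|^2}$ via \eqref{eq:supx0xtdiff} of Lemma~\ref{lm:contdiffprops}, and bound $\expec{\|\nabla f(x_n) - g_{\nu,b}(x_n)\|^2}$ (conditioning on $x_n$ first) by \eqref{eq:ggradfcon} of Lemma~\ref{lm:zogradvar}. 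Multiplying through by $M^2$ (and using $u=1/M$ so that $M^2u^2=1$) yields, before substituting parameters,
\begin{align*}
\expec{\|\nabla f(x_\nh) - \nabla f(x_n^*(\alpha_n h))\|^2} \le O\!\left(M^2 h^6\,\expec{\|v_n\|^2} + h^8\,\expec{\|\nabla f(x_n)\|^2} + Mdh^7 + \tfrac{h^4(d+5)(\expec{\|\nabla f(x_n)\|^2}+\sigma^2)}{b} + h^4\nu^2 M^2(d+3)^3\right).
\end{align*}

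Finally I would substitute $b = d\kappa/h^3$ and $\nu = uh^2/d^{1.5}$ (so $\nu^2 = h^4/(M^2 d^3)$): the term $h^4(d+5)\expec{\|\nabla f(x_n)\|^2}/b$ becomes $O(h^7\kappa^{-1}\expec{\|\nabla f(x_n)\|^2})$, the term $h^4(d+5)\sigma^2/b$ becomes $O(h^7\kappa^{-1}\sigma^2)$, and $h^4\nu^2 M^2(d+3)^3$ becomes $O(h^8)$, which combine with the remaining terms to give exactly \eqref{eq:graddiffxnhalf}. The calculation is essentially routine; the one point requiring care — and the reason the randomized-midpoint analysis is more delicate than that of ZO-LMC — is that $\expec{\|\nabla f(x_n)\|^2}$ and $\expec{\|v_n\|^2}$ are \emph{not} bounded a priori, so these quantities must be carried through symbolically (rather than crudely bounded) all the way to the final estimate, matching the mixed $h$-powers in \eqref{eq:graddiffxnhalf} that will later be fed into the recursion for the overall $W_2$ bound.
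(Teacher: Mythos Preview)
Your proposal is correct and follows essentially the same approach as the paper's proof: use smoothness (\textbf{A2}) to reduce to $\expec{\|x_\nh-x_n^*(\alpha_n h)\|^2}$, exploit the shared Brownian motion so that only the drift integral $\tfrac{u}{2}\int_0^{\alpha_n h}(1-e^{-2(\alpha_n h-s)})(\nabla f(x_n^*(s))-g_{\nu,b}(x_n))\,ds$ survives, apply Cauchy--Schwarz, split into $\nabla f(x_n^*(s))-\nabla f(x_n)$ and $\nabla f(x_n)-g_{\nu,b}(x_n)$, invoke Lemma~\ref{lm:contdiffprops} and Lemma~\ref{lm:zogradvar} respectively, and finally substitute $b=d\kappa/h^3$ and $\nu=uh^2/d^{1.5}$. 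The only cosmetic difference is that you use the slightly sharper $1-e^{-2(t-s)}\le 2(t-s)$ (yielding $\tfrac43 h^3$) where the paper uses $1-e^{-2(\alpha h-s)}\le 2h$, but this is immaterial at the $O(\cdot)$ level.
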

\begin{proof}[of Lemma~\ref{lm:graddiffxnhalf}] For notational simplicity, we drop the subscript $n$ from $\alpha_n$ below.
First, note that we have	
	\begin{align*}
	&\expec{\|\nabla f(x_\nh)-\nabla f(x_{n}^*(\alpha h))\|^2}\\
	\leq & M^2 \expec{\|x_\nh-x_{n}^*(\alpha h)\|^2}\\
	\leq & M^2 \expec{\|\frac{u}{2}\int_{0}^{\alpha h}(1-e^{-2(\alpha h - s)})(g_{\nu,b}(x_n)-\nabla f(x_{n}^*(s))ds\|^2}\\
	\leq & \frac{u^2M^2}{4} \expec{\int_{0}^{\alpha h}(1-e^{-2(\alpha h - s)})^2ds\int_{0}^{\alpha h}\|g_{\nu,b}(x_n)-\nabla f(x_{n}^*(s))\|^2ds}\\
	\leq & h^3 \expec{\int_{0}^{\alpha h}\|g_{\nu,b}(x_n)-\nabla f(x_{n}^*(s))\|^2ds}\\
	\leq & 2h^3 \expec{\int_{0}^{\alpha h}\left(\|g_{\nu,b}(x_n)-\nabla f(x_n)\|^2+\|\nabla f(x_n)-\nabla f(x_{n}^*(s))\|^2\right)ds}\\
	\leq & 2h^3 \expec{\int_{0}^{\alpha h}\left(\|g_{\nu,b}(x_n)-\nabla f(x_n)\|^2+M^2\|x_n-x_{n}^*(s)\|^2\right)ds}\\
	\leq & 2h^3 \textbf{E} \bigg[ \int_{0}^{\alpha h}\bigg(\frac{3\nu^2}{2}M^2(d+3)^3+\frac{4(d+5)\left(\sigma^2+\|\nabla f(x_n)\|^2\right)}{b}\\
	&~~~~+M^2O\left(h^2\|v_n\|^2+u^2h^4\|\nabla f(x_n)\|^2+udh^3\right)\bigg)ds \bigg]\\
	= & 2h^4 \expec{\frac{3\nu^2}{2}M^2(d+3)^3+\frac{4(d+5)\left(\sigma^2+\|\nabla f(x_n)\|^2\right)}{b}+M^2O\left(h^2\|v_n\|^2+u^2h^4\|\nabla f(x_n)\|^2+udh^3\right)}
	\end{align*}
	The first and the sixth inequality follows from the first condition of Assumption~\ref{smooth_assum}, the second inequality follows from \eqref{eq:rmpupdatexnh}, and \eqref{eq:xnstartdef}, the third inequality follows from Cauchy-Schwarz inequality, the fourth inequality follows from choosing $u=1/M$ and the fact that $1-e^{-2(\alpha h-s)}\leq 2h$, the fifth inequality follows from Young's inequality, the seventh inequality follows from Lemma~\ref{lm:zogradvar} and Lemma~\ref{lm:contdiffprops}.
	Choosing $b$, and $\nu$ as in \eqref{eq:paramchoicefinal}, we have, 
	\begin{align*}
	&\expec{\|\nabla f(x_\nh)-\nabla f(x_{n}^*(\alpha h))\|^2}\\
	\leq & O\left(M^2h^6\expec{\|v_n\|^2}+(h^8+h^{7}\kappa^{-1})\expec{\|\nabla f(x_n)\|^2}+Mdh^7+h^{7}\kappa^{-1}\sigma^2+h^{8}\right)
	\end{align*}
\end{proof}

\begin{lemma}\label{lm:graderrorxnhalf}
	Let $g_{\nu,b}(x_n)$ be defined as in \eqref{eq:gradest}. Then under the conditions of Lemma~\ref{lm:graddiffxnhalf}, we have
	\begin{align*}
	&\expec{\|\nabla f(x_\nh)-g_{\nu,b}(x_\nh)\|^2}\\
	\leq& O\left(M^2h^{5}\kappa^{-1}\ev2+h^{3}\kappa^{-1}\ef2+h^{4}+ Mdh^{6}\kappa^{-1}+h^{3}\kappa^{-1}\sigma^2\right) \numberthis\label{eq:graderrorxnhalf}
	\end{align*}
\end{lemma}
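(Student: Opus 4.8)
The plan is to bound $\expec{\|\nabla f(x_\nh)-g_{\nu,b}(x_\nh)\|^2}$ by combining the variance bound for the zeroth-order gradient estimator from Lemma~\ref{lm:zogradvar} (equation \eqref{eq:ggradfcon}) evaluated at the random point $x_\nh$, together with control of the gradient norm $\expec{\|\nabla f(x_\nh)\|^2}$. Concretely, applying \eqref{eq:ggradfcon} conditionally on $x_\nh$ and taking total expectation gives
\begin{align*}
\expec{\|\nabla f(x_\nh)-g_{\nu,b}(x_\nh)\|^2}\leq \frac{4(d+5)\left(\expec{\|\nabla f(x_\nh)\|^2}+\sigma^2\right)}{b}+\frac{3\nu^2 M^2(d+3)^3}{2},
\end{align*}
so the whole problem reduces to (i) substituting the parameter choices $b=d\kappa/h^3$ and $\nu=uh^2/d^{1.5}$ from \eqref{eq:paramchoicefinal}, which makes the second term $O(h^4)$ (using $u=1/M$, $\kappa=M/m$) and the first term of order $h^3\kappa^{-1}\left(\expec{\|\nabla f(x_\nh)\|^2}+\sigma^2\right)/M$ up to constants, and (ii) bounding $\expec{\|\nabla f(x_\nh)\|^2}$.

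For step (ii), I would first write $\|\nabla f(x_\nh)\|^2 \leq 2\|\nabla f(x_\nh)-\nabla f(x_n^*(\alpha h))\|^2 + 2\|\nabla f(x_n^*(\alpha h))\|^2$. The first piece is exactly controlled by Lemma~\ref{lm:graddiffxnhalf}, which bounds it by $O\!\left(M^2h^6\ev2+(h^8+h^7\kappa^{-1})\ef2+Mdh^7+h^7\kappa^{-1}\sigma^2+h^8\right)$; multiplied by the $h^3\kappa^{-1}/M$ prefactor this is plainly a higher-order term and absorbs into the stated bound. The second piece, $\expec{\|\nabla f(x_n^*(\alpha h))\|^2}$, is controlled by \eqref{eq:supgradsize} of Lemma~\ref{lm:contdiffprops}: since $\alpha h\in[0,h]$,
\begin{align*}
\expec{\|\nabla f(x_n^*(\alpha h))\|^2}\leq \expec{\sup_{t\in[0,h]}\|\nabla f(x_n^*(t))\|^2}\leq O\left(\ef2+M^2h^2\ev2+Mdh^3\right).
\end{align*}
Feeding this through the $\tfrac{4(d+5)}{b}=\tfrac{4(d+5)h^3}{d\kappa}=O(h^3\kappa^{-1})$ prefactor (treating $d+5=O(d)$) yields contributions of order $h^3\kappa^{-1}\ef2$, $M^2h^5\kappa^{-1}\ev2$, and $Mdh^6\kappa^{-1}$, which, together with the $h^3\kappa^{-1}\sigma^2$ term from the $\sigma^2/b$ part and the $O(h^4)$ bias term, match exactly the right-hand side of \eqref{eq:graderrorxnhalf}.

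The main bookkeeping obstacle — rather than a genuine difficulty — is tracking which terms are dominant under $h\leq 1/20$ so that the higher powers of $h$ (e.g. the $h^7,h^8$ terms from Lemma~\ref{lm:graddiffxnhalf}, and $h^8\ef2$ from the bias square expansion) can be folded into the lower-order terms already present, and making sure the dimension factors $d+3$, $d+5$ are consistently replaced by $O(d)$ and the $u=1/M$, $\kappa=M/m$ substitutions are done uniformly. One subtlety worth being careful about: $x_\nh$ depends on the fresh randomness $u_i,\xi_i$ used in $g_{\nu,b}(x_n)$ and on $\alpha_n$ and the Brownian increments, but $g_{\nu,b}(x_\nh)$ uses an independent batch, so conditioning on $x_\nh$ and invoking Lemma~\ref{lm:zogradvar} is legitimate; I would state this independence explicitly before applying the conditional bound. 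No new ideas beyond the already-established lemmas are needed.
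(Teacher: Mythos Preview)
Your proposal is correct and follows essentially the same approach as the paper: apply the variance bound \eqref{eq:ggradfcon} from Lemma~\ref{lm:zogradvar} at $x_\nh$, then control $\expec{\|\nabla f(x_\nh)\|^2}$ via the split into $\|\nabla f(x_\nh)-\nabla f(x_n^*(\alpha h))\|^2$ (handled by Lemma~\ref{lm:graddiffxnhalf}) and $\|\nabla f(x_n^*(\alpha h))\|^2$ (handled by \eqref{eq:supgradsize}), and finally substitute the parameter choices for $b$ and $\nu$. Your explicit remark on the independence of the batch used for $g_{\nu,b}(x_\nh)$ from $x_\nh$ is a point the paper leaves implicit, and the stray ``$/M$'' in your first display of the prefactor is a harmless slip since you correctly identify $4(d+5)/b=O(h^3\kappa^{-1})$ later.
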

\begin{proof}[of Lemma~\ref{lm:graderrorxnhalf}]
	Using Lemma~\ref{lm:zogradvar} and Young's inequality, we have
	\begin{align*}
	&\expec{\|\nabla f(x_\nh)-g_{\nu,b}(x_\nh)\|^2}\leq \frac{3\nu^2}{2}M^2(d+3)^3+\frac{4(d+5)\left(\sigma^2+\expec{\|\nabla f(x_\nh)\|^2}\right)}{b}\\
	\leq & \frac{3\nu^2}{2}M^2(d+3)^3+\frac{4(d+5)\left(\sigma^2+2\expec{\|\nabla f(x_\nh)-\nabla f(x^*_n(\alpha h))\|^2}+2\expec{\|\nabla f(x^*_n(\alpha h))\|^2}\right)}{b}.\\
	\end{align*}
	Furthermore, using Lemma~\ref{lm:graddiffxnhalf}, and \eqref{eq:supgradsize}, and the fact that $h$ is small, we get
	\begin{align*}
	&\expec{\|\nabla f(x_\nh)-\nabla f(x^*_n(\alpha h))\|^2}+\expec{\|\nabla f(x^*_n(\alpha h))\|^2}\\
	\leq & O\left(M^2h^2\expec{\|v_n\|^2}+\expec{\|\nabla f(x_n)\|^2}+Mdh^{3}+h^{7}\kappa^{-1}\sigma^2+h^{8}\right).
	\end{align*}
	Hence, we have
	\begin{align*}
	&\expec{\|\nabla f(x_\nh)-g_{\nu,b}(x_\nh)\|^2}\\
	\leq& O\left(M^2h^{5}\kappa^{-1}\ev2+h^{3}\kappa^{-1}\ef2+h^{4}+ Mdh^{6}\kappa^{-1}+h^{3}\kappa^{-1}\sigma^2\right)
	\end{align*}
\end{proof}
\begin{lemma}\label{lm:lemma2eq}
	Let $\Ea$ denote the expectation with respect to $\alpha$ at each iteration $n$. Let $\expec{\cdot}$ be the expectation with respect to other randomness present in iteration $n$. Let $\{x_n^*(t)\}_{t\in[0,h]}$ be the true solution to  \eqref{eq:xnstartdef}, and \eqref{eq:vnstartdef} with the initial point $x_n^*(0)=x_n$ coupled to $x_\nh$, $v_n$, and $x_{n+1}$ through a shared Brownian motion. Then, under Assumption~\ref{as:zo}--\ref{smooth_assum}, for $h\leq 1/20$, and $u=1/M$, we have
	\begin{subequations}
		\begin{align*}
		\expec{\|\Ea{x_{n+1}}-x_n^*(h)\|^2}\leq& O\left((h^{10}+h^{9}\kappa^{-1})\expec{\|v_n\|^2}+u^2(h^{12}+h^{7}\kappa^{-1})\expec{\|\nabla f(x_n)\|^2}\right.\\
		&\left.+ud(h^{11}+h^{10}\kappa^{-1})+u^2h^{7}\kappa^{-1}\sigma^2+u^2h^{8}\right)\numberthis\label{eq:expecelphaxn1xnh}\\
		\expec{\|\Ea{v_{n+1}}-v_n^*(h)\|^2}\leq& O\left((h^{7}\kappa^{-1}+h^{8})\ev2+u^2(h^{10}+h^{5}\kappa^{-1})\ef2\right.\\
		&\left.+u^2h^{6}+u^2h^{5}\kappa^{-1}\sigma^2+ud(h^{9}+h^{8}\kappa^{-1})\right)\numberthis\label{eq:expecelphavn1vnh}\\
		\expec{\|x_{n+1}-x_n^*(h)\|^2}\leq &O\left(h^{6}\ev2+u^2h^{4}\ef2+u^2h^{8}+ udh^7+u^2h^{7}\kappa^{-1}\sigma^2\right) \numberthis\label{eq:xn1xnhdiff}\\
		\expec{\|v_{n+1}-v_n^*(h)\|^2}\leq &O\left(h^4\ev2+u^2h^4\ef2+u^2h^{8}+ udh^5+u^2h^{7}\kappa^{-1}\sigma^2\right)  \numberthis\label{eq:vn1vnhnormdiff}
		\end{align*}
	\end{subequations}
\end{lemma}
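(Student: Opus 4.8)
The plan is to mimic the proof of Lemma~2 of \cite{shen2019randomized} for the exact first-order randomized midpoint method, replacing the (identically zero) gradient error there by the zeroth-order estimation error, and controlling the latter via Lemma~\ref{lm:graddiffxnhalf} and Lemma~\ref{lm:graderrorxnhalf}. First I would set up the coupling: realize the Gaussian vectors $\epsilon^{(1)}_{n+1},\epsilon^{(2)}_{n+1},\epsilon^{(3)}_{n+1}$ in \eqref{eq:rmpupdatexnh}--\eqref{eq:rmpupdatevn1} as the stochastic integrals $\int_0^{\alpha_n h}(1-e^{-2(\alpha_n h-s)})dB_s$, $\int_0^{h}(1-e^{-2(h-s)})dB_s$, and $\int_0^{h}e^{-2(h-s)}dB_s$ driven by the \emph{same} Brownian motion that drives the true trajectory $\{(x_n^*(t),v_n^*(t))\}_{t\in[0,h]}$ of \eqref{eq:xnstartdef}--\eqref{eq:vnstartdef} started at $(x_n,v_n)$; this is exactly the construction of Appendix~A of \cite{shen2019randomized}. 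Under this coupling every Brownian contribution cancels in the differences, so for instance $v_{n+1}-v_n^*(h)=u\int_0^h e^{-2(h-s)}\nabla f(x_n^*(s))\,ds-uhe^{-2(h-\alpha_n h)}g_{\nu,b}(x_\nh)$, and analogously for $x_\nh\mapsto x_{n+1}$.

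Next, for each of the four differences I would split it into a \emph{gradient-estimation piece}, obtained by replacing $g_{\nu,b}(x_\nh)$ by $\nabla f(x_n^*(\alpha_n h))$ (e.g. $uhe^{-2(h-\alpha_n h)}\bigl(\nabla f(x_n^*(\alpha_n h))-g_{\nu,b}(x_\nh)\bigr)$), and a \emph{midpoint-discretization piece} comparing the integral $\int_0^h w(s)\nabla f(x_n^*(s))\,ds$ with its one-sample estimate $h\,w(\alpha_n h)\nabla f(x_n^*(\alpha_n h))$ for the relevant weight $w$. The key fact, which is precisely why the randomized midpoint is used, is that $\Ea\bigl[h\,w(\alpha_n h)\nabla f(x_n^*(\alpha_n h))\bigr]=\int_0^h w(s)\nabla f(x_n^*(s))\,ds$ (substitute $s=\alpha_n h$), so the midpoint piece is mean-zero in $\alpha_n$. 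Hence $\Ea{x_{n+1}}-x_n^*(h)$ and $\Ea{v_{n+1}}-v_n^*(h)$ equal $\Ea$ of the gradient-estimation piece alone; I would bound these by Jensen, $\expec{\|\Ea(\cdot)\|^2}\le\Ea\expec{\|\cdot\|^2}$, then by $\|\nabla f(x_n^*(\alpha_n h))-g_{\nu,b}(x_\nh)\|^2\le 2\|\nabla f(x_n^*(\alpha_n h))-\nabla f(x_\nh)\|^2+2\|\nabla f(x_\nh)-g_{\nu,b}(x_\nh)\|^2$ together with \eqref{eq:graddiffxnhalf} and \eqref{eq:graderrorxnhalf}. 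The weight $w$ contributes a factor $(1-e^{-2(h-\alpha_n h)})^2\le4h^2$ in the $x$-updates but only $\le1$ in the $v$-updates, which accounts for the extra two powers of $h$ separating \eqref{eq:expecelphaxn1xnh} from \eqref{eq:expecelphavn1vnh}.

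For the non-averaged bounds \eqref{eq:xn1xnhdiff} and \eqref{eq:vn1vnhnormdiff} the midpoint piece survives, so I would handle it by writing $\nabla f(x_n^*(s))=\nabla f(x_n)+(\nabla f(x_n^*(s))-\nabla f(x_n))$: the $\nabla f(x_n)$-part equals $u\nabla f(x_n)$ times a scalar with $\alpha_n$-variance $O(h^4)$ (Taylor expansion of $he^{-2(h-\alpha_n h)}$ about its mean $\tfrac{1-e^{-2h}}{2}$), giving a $u^2h^4\ef2$-type term; the remainder is bounded pathwise by $2uhM\sup_{t\in[0,h]}\|x_n^*(t)-x_n\|$ and then by \eqref{eq:supx0xtdiff}, while the gradient-estimation piece is treated as above but without the Jensen averaging. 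Collecting all contributions, substituting $u=1/M$ and $\nu,b$ as in \eqref{eq:paramchoicefinal}, and discarding terms of strictly higher order in $h$ (using $h\le1/20$) should yield the stated exponents. The main obstacle is the bookkeeping — verifying that the $O(h^4)$ variance of the midpoint weight, the $h$-powers coming out of Lemmas~\ref{lm:graddiffxnhalf}--\ref{lm:graderrorxnhalf}, and the trajectory estimates \eqref{eq:supx0xtdiff}--\eqref{eq:supvsize} assemble into exactly the claimed $\ev2,\ef2,\sigma^2$ and dimension/condition-number dependencies; a secondary technical point is ensuring the coupling uses a single shared Brownian motion for all of $x_\nh$, $x_{n+1}$, $v_{n+1}$, so that the Brownian cancellation is exact and not merely distributional.
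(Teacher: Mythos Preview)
Your proposal is correct and follows essentially the same approach as the paper: set up the shared-Brownian-motion coupling so the noise cancels, split each difference into a gradient-estimation piece (controlled via Lemmas~\ref{lm:graddiffxnhalf} and~\ref{lm:graderrorxnhalf}) and a midpoint-discretization piece, and exploit the randomized-midpoint unbiasedness $\Ea[h\,w(\alpha_n h)\nabla f(x_n^*(\alpha_n h))]=\int_0^h w(s)\nabla f(x_n^*(s))\,ds$ to make the latter vanish in the averaged bounds. The only organizational difference is in how the midpoint piece is handled for the non-averaged bounds: you freeze $\nabla f$ at $x_n$ and bound the scalar weight fluctuation by $O(h^2)$, whereas the paper freezes the weight (adding and subtracting $\int_0^h(1-e^{-2(h-\alpha h)})\nabla f(x_n^*(s))\,ds$) and then controls the two resulting terms via $\sup_{t}\|x_n^*(t)-x_n^*(0)\|$ and $\sup_{t}\|\nabla f(x_n^*(t))\|$ from Lemma~\ref{lm:contdiffprops}; for part~(d) the paper simply invokes Lemma~2 of \cite{shen2019randomized} for the first-order contribution and adds the zeroth-order error. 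Both decompositions are equivalent at the level of $h$-orders and lead to the stated bounds.
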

\begin{proof}[of Lemma~\ref{lm:lemma2eq}]
	\begin{enumerate}[label=\alph*)]
		\item Using Lemma~\ref{lm:graddiffxnhalf}, and \ref{lm:graderrorxnhalf}, we have
		\begin{align*}
		&\expec{\|\Ea{x_{n+1}}-x_n^*(h)\|^2}\\
		\leq& \expec{\|\frac{uh}{2}\Ea{(1-e^{-2(h-\alpha h)})g_{\nu,b}(x_\nh)}-\frac{u}{2}\int_{0}^{h}(1-e^{-2(h-s)})\nabla f(x_n^*(s))ds\|^2}\\
		\leq &  2\expec{\|\frac{uh}{2}\Ea{(1-e^{-2(h-\alpha h)})(g_{\nu,b}(x_\nh)-\nabla f(x_\nh))}\|^2}\\
		+&2\expec{\|\frac{uh}{2}\Ea{(1-e^{-2(h-\alpha h)})\nabla f(x_\nh)}-\frac{u}{2}\int_{0}^{h}(1-e^{-2(h-s)})\nabla f(x_n^*(s))ds\|^2}\\
		\leq &  2u^2h^4\expec{\|\Ea{(g_{\nu,b}(x_\nh)-\nabla f(x_\nh))}\|^2}\\
		+&2\expec{\|\frac{uh}{2}\Ea{(1-e^{-2(h-\alpha h)})\nabla f(x_\nh)}-\frac{u}{2}\int_{0}^{h}(1-e^{-2(h-s)})\nabla f(x_n^*(s))ds\|^2}\\
		\leq & O\left(h^{9}\kappa^{-1}\ev2+u^2h^{7}\kappa^{-1}\ef2+u^2h^{8}+ udh^{10}\kappa^{-1}+u^2h^{7}\kappa^{-1}\sigma^2\right)\\
 +& 2 \textbf{E} \bigg[\|\frac{uh}{2}\Ea{(1-e^{-2(h-\alpha h)})(\nabla f(x_\nh)-\nabla f(x_n^*(\alpha h)))}+\frac{uh}{2}\Ea{(1-e^{-2(h-\alpha h)})\nabla f(x_n^*(\alpha h))} \\
 - &\frac{u}{2}\int_{0}^{h}(1-e^{-2(h-s)})\nabla f(x_n^*(s))ds\|^2\bigg]\\
		\leq & O\left(h^{9}\kappa^{-1}\ev2+u^2h^{7}\kappa^{-1}\ef2+u^2h^{8}+ udh^{10}\kappa^{-1}+u^2h^{7}\kappa^{-1}\sigma^2\right)\\
		+&2u^2h^4\expec{\|\Ea{(\nabla f(x_\nh)-\nabla f(x_n^*(\alpha h)))}\|^2}\\
		\leq & O\left(h^{9}\kappa^{-1}\ev2+u^2h^{7}\kappa^{-1}\ef2+u^2h^{8}+ udh^{10}\kappa^{-1}+u^2h^{7}\kappa^{-1}\sigma^2\right)\\
		+&O\left(h^{10}\expec{\|v_n\|^2}+u^2(h^{12}+h^{11}\kappa^{-1})\expec{\|\nabla f(x_n)\|^2}+udh^{11}+u^2h^{11}\kappa^{-1}\sigma^2+u^2h^{12}\right)\\
		\leq & O\left((h^{10}+h^{9}\kappa^{-1})\expec{\|v_n\|^2}+u^2(h^{12}+h^{7}\kappa^{-1})\expec{\|\nabla f(x_n)\|^2}+ud(h^{11}+h^{10}\kappa^{-1})\right.\\
		&\left.+u^2h^{7}\kappa^{-1}\sigma^2+u^2h^{8}\right)
		\end{align*}
		The second inequality follows from Young's inequality, the third and fifth inequality uses the fact $1-e^{-2(\alpha-\alpha h)}\leq 2h$, the fifth inequality follows from the fact $\frac{uh}{2}\Ea{(1-e^{-2(h-\alpha h)})\nabla f(x_n^*(\alpha h))}-\frac{u}{2}\int_{0}^{h}(1-e^{-2(h-s)})\nabla f(x_n^*(s))ds=0$. 
		\item 	Next, note that
		\begin{align*}
		&\expec{\|\Ea{v_{n+1}}-v_n^*(h)\|^2} \\
		= &\expec{\|\Ea{uhe^{-2(h-\alpha h)}g_{\nu,b}(x_\nh)}-u\int_{0}^{h}e^{-2(h-s)}\nabla f(x_n^*(s))ds\|^2}\\
=& \textbf{E} \bigg[\|\Ea{uhe^{-2(h-\alpha h)}(g_{\nu,b}(x_\nh)-\nabla f(x_\nh)+\nabla f(x_\nh)-\nabla f(x_n^*(\alpha h))+\nabla f(x_n^*(\alpha h)))}\\
-&u\int_{0}^{h}e^{-2(h-s)}\nabla f(x_n^*(s))ds\|^2 \bigg]\\
		\leq & 2u^2h^2\expec{\|g_{\nu,b}(x_\nh)-\nabla f(x_\nh)\|^2}+2u^2h^2\expec{\|\nabla f(x_\nh)-\nabla f(x_n^*(\alpha h))\|^2}\\
		\leq & 2u^2h^2O\left(M^2h^{5}\kappa^{-1}\ev2+h^{3}\kappa^{-1}\ef2+h^{4}+ Mdh^{6}\kappa^{-1}+h^{3}\kappa^{-1}\sigma^2\right)\\
		+&2u^2h^2O\left(M^2h^6\expec{\|v_n\|^2}+(h^8+h^{7}\kappa^{-1})\expec{\|\nabla f(x_n)\|^2}+Mdh^7+h^{7}\kappa^{-1}\sigma^2+h^{8}\right)\\
		\leq & O\left((h^{7}\kappa^{-1}+h^{8})\ev2+u^2(h^{10}+h^{5}\kappa^{-1})\ef2+u^2h^{6}\right.\\
		&\left.+u^2h^{5}\kappa^{-1}\sigma^2+ud(h^{9}+h^{8}\kappa^{-1})\right)
		\end{align*}
		The first inequality follows from using, $\Ea{\nabla f(x_n^*(\alpha h)))}-u\int_{0}^{h}e^{-2(h-s)}\nabla f(x_n^*(s))ds=0$, and $e^{-2(h-\alpha h)}\leq 1$, and the second inequality follows from Lemma~\ref{lm:graddiffxnhalf}, and \ref{lm:graderrorxnhalf}.
		\item For the next part, note that we have
		\begin{align*}
		&\expec{\|x_{n+1}-x_n^*(h)\|^2}\\
		\leq & \expec{\|\frac{uh}{2}(1-e^{-2(h-\alpha h)})g_{\nu,b}(x_\nh)-\frac{u}{2}\int_{0}^{h}(1-e^{-2(h-s)})\nabla f(x_n^*(s))ds\|^2}\\
		\leq&\mathbf{E}\left[\|\frac{uh}{2}(1-e^{-2(h-\alpha h)})(g_{\nu,b}(x_\nh)-\nabla f(x_\nh)+\nabla f(x_\nh)-\nabla f(x_n^*(\alpha h))\right.\\
		&\left.+\nabla f(x_n^*(\alpha h)))-\frac{u}{2}\int_{0}^{h}(1-e^{-2(h-\alpha h)})\nabla f(x_n^*(s))ds+\frac{u}{2}\int_{0}^{h}(1-e^{-2(h-\alpha h)})\nabla f(x_n^*(s))ds\right.\\
		&\left.-\frac{u}{2}\int_{0}^{h}(1-e^{-2(h-s)})\nabla f(x_n^*(s))ds\|^2\right]\\
		\leq & 4u^2h^4\expec{\|g_{\nu,b}(x_\nh)-\nabla f(x_\nh)\|^2}+4u^2h^4\expec{\|\nabla f(x_\nh)-\nabla f(x_n^*(\alpha h))\|^2}\\
		+&\expec{\|uh(1-e^{-2(h-\alpha h)})\nabla f(x_n^*(\alpha h))-u\int_{0}^{h}(1-e^{-2(h-\alpha h)})\nabla f(x_n^*(s))ds\|^2}\\
		+&u^2\expec{\|\int_{0}^{h}(1-e^{-2(h-\alpha h)})\nabla f(x_n^*(s))ds-\int_{0}^{h}(1-e^{-2(h-s)})\nabla f(x_n^*(s))ds\|^2}\\
		\leq & O\left(h^{9}\kappa^{-1}\ev2+u^2h^{7}\kappa^{-1}\ef2+u^2h^{8}+ udh^{10}\kappa^{-1}+u^2h^{7}\kappa^{-1}\sigma^2\right)\\
		+& O\left(h^{10}\expec{\|v_n\|^2}+u^2(h^{12}+h^{11}\kappa^{-1})\expec{\|\nabla f(x_n)\|^2}+udh^{11}+u^2h^{11}\kappa^{-1}\sigma^2+u^2h^{12}\right)\\
		+& 16h^4\expec{\sup_{t\in[0,h]}\|x_n^*(0)-x_n^*(t)\|^2}+4u^2h^4\expec{\sup_{t\in[0,h]}\|\nabla f(x_n^*(t))\|^2}\\
		\leq & O\big((h^{10}+h^{9}\kappa^{-1})\ev2+u^2(h^{7}\kappa^{-1}+h^{12})\ef2\\
		+&u^2h^{8}+ (udh^{10}\kappa^{-1}+udh^{11})+u^2h^{7}\kappa^{-1}\sigma^2\big)\\
		+&O\left(h^{6}\ev2+u^2h^{8}\ef2+udh^{7}\right)\\
		+& O(h^6\ev2+u^2h^4\ef2+udh^7) \\
		\leq & O\big((h^{10}+h^{9}\kappa^{-1})\ev2+u^2(h^{7}\kappa^{-1}+h^{12})\ef2 \\
		+&u^2h^{8}+ (udh^{10}\kappa^{-1}+udh^{11})+u^2h^{7}\kappa^{-1}\sigma^2\big)\\
		+& O(h^6\ev2+u^2h^4\ef2+udh^7) \\
		\leq & O\left(h^{6}\ev2+u^2h^{4}\ef2+u^2h^{8}+ udh^7+u^2h^{7}\kappa^{-1}\sigma^2\right)
		\end{align*}
		The third inequality follows from Young's inequality, the fourth inequality follows from Lemma~\ref{lm:graddiffxnhalf}, and \ref{lm:graderrorxnhalf}, and the fact $1-e^{-2(\alpha-\alpha h)}\leq 2h$, and the fifth inequality follows from \eqref{eq:supx0xtdiff},and \eqref{eq:supgradsize}.
		\item Finally, note that we have
		\begin{align*}
		&\expec{\|v_{n+1}-v_n^*(h)\|^2}\\
		\leq & 2u^2h^4\expec{\|g_{\nu,b}(x_\nh)-\nabla f(x_\nh)\|^2}+O(h^4\ev2+u^2h^4\ef2+udh^5)\\
		\leq & O\left(h^{9}\kappa^{-1}\ev2+u^2h^{7}\kappa^{-1}\ef2+u^2h^{8}+ udh^{10}\kappa^{-1}+u^2h^{7}\kappa^{-1}\sigma^2\right)\\
		+&O(h^4\ev2+u^2h^4\ef2+udh^5)\\
		\leq & O\left(h^4\ev2+u^2h^4\ef2+u^2h^{8}+ udh^5+u^2h^{7}\kappa^{-1}\sigma^2\right)
		\end{align*}
		The first inequality follows from Lemma 2 of \cite{shen2019randomized}, and the second inequality follows from Lemma~\ref{lm:graderrorxnhalf}.
	\end{enumerate}
\end{proof}
\begin{lemma}\label{lm:contdiscdiff}
	Under conditions of Lemma~\ref{lm:lemma2eq},
	\begin{align*}
	\expec{f(x_{n+1}(0))-f(x_n(h))}\leq  O\left(Mh^5\ev2+uh^3\ef2+dh^6+uh^{4}\kappa^{-1}\sigma^2+uh^{5}\right) \numberthis\label{eq:contdiscdiff}
	\end{align*}
\end{lemma}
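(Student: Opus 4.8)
The plan is to derive this one-step estimate from the smoothness of $f$, exploiting the defining property of the randomized midpoint discretization: once one averages over the midpoint randomizer $\alpha_n$ alone, the discrete iterate $x_{n+1}$ agrees with the true continuous solution $x_n^*(h)$ up to the small residual quantified in \eqref{eq:expecelphaxn1xnh}, so that the leading-order discretization error in the first-order term cancels. Writing $x_{n+1}$ for the iterate $x_{n+1}(0)$ and $x_n^*(h)$ for the exact solution \eqref{eq:xnstartdef} at time $h$, I would first use condition \textbf{A2} of Assumption~\ref{smooth_assum} to write
\begin{align*}
f(x_{n+1})-f(x_n^*(h))\leq \ve{\nabla f(x_n^*(h))}{x_{n+1}-x_n^*(h)}+\frac{M}{2}\left\|x_{n+1}-x_n^*(h)\right\|^2 ,
\end{align*}
and split the inner product through $\Ea{x_{n+1}}$. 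Since $x_n^*(h)$ solves \eqref{eq:xnstartdef} started at $x_n$ and is coupled to the discrete chain only through the shared Brownian motion, it does not depend on $\alpha_n$, so $\Ea{\ve{\nabla f(x_n^*(h))}{x_{n+1}-\Ea{x_{n+1}}}}=0$. Taking the remaining expectation then leaves
\begin{align*}
\expec{f(x_{n+1})-f(x_n^*(h))}\leq \expec{\ve{\nabla f(x_n^*(h))}{\Ea{x_{n+1}}-x_n^*(h)}}+\frac{M}{2}\expec{\left\|x_{n+1}-x_n^*(h)\right\|^2} .
\end{align*}

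Next I would dispatch the two terms using the lemmas already in hand. For the quadratic term, multiply \eqref{eq:xn1xnhdiff} by $M/2$ and use $u=1/M$ and $h\leq 1/20$; the resulting monomials $Mh^6\ev2$, $uh^4\ef2$, $uh^8$, $dh^7$, $uh^7\kappa^{-1}\sigma^2$ are each dominated by one of the five terms of \eqref{eq:contdiscdiff}. For the linear term I would apply Cauchy--Schwarz and then Young's inequality with the weight $t=uh^3$,
\begin{align*}
\expec{\ve{\nabla f(x_n^*(h))}{\Ea{x_{n+1}}-x_n^*(h)}}\leq \frac{uh^3}{2}\expec{\left\|\nabla f(x_n^*(h))\right\|^2}+\frac{1}{2uh^3}\expec{\left\|\Ea{x_{n+1}}-x_n^*(h)\right\|^2} ,
\end{align*}
bound $\expec{\|\nabla f(x_n^*(h))\|^2}$ by \eqref{eq:supgradsize} of Lemma~\ref{lm:contdiffprops} (with $x(0)=x_n$, $v(0)=v_n$), which is $O(\ef2+M^2h^2\ev2+Mdh^3)$, and bound $\expec{\|\Ea{x_{n+1}}-x_n^*(h)\|^2}$ by \eqref{eq:expecelphaxn1xnh}.

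Finally I would substitute, set $u=1/M$, and simplify with $h\leq 1/20$ and $\kappa^{-1}=m/M\leq1$ to collapse higher powers of $h$ into lower ones: a short bookkeeping check shows the $\tfrac{uh^3}{2}\expec{\|\nabla f(x_n^*(h))\|^2}$ contribution is $O(uh^3\ef2+Mh^5\ev2+dh^6)$ and the $\tfrac{1}{2uh^3}\expec{\|\Ea{x_{n+1}}-x_n^*(h)\|^2}$ contribution is majorized by $Mh^5\ev2$, $uh^3\ef2$, $dh^6$, $uh^4\kappa^{-1}\sigma^2$ and $uh^5$, which together with the already dominated quadratic term yields \eqref{eq:contdiscdiff}. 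The main obstacle --- and the reason the naive bound $\ve{\nabla f}{x_{n+1}-x_n^*(h)}\le\|\nabla f\|\,\|x_{n+1}-x_n^*(h)\|$ is too weak for any choice of Young's weight --- is precisely the opening cancellation: one must average over $\alpha_n$ before the Brownian path and the gradient-estimator noise so that the leading discretization term drops out and only the higher-order residual of \eqref{eq:expecelphaxn1xnh} survives; everything after that is routine estimation with Lemmas~\ref{lm:contdiffprops}, \ref{lm:graddiffxnhalf}, \ref{lm:graderrorxnhalf} and \ref{lm:lemma2eq}.
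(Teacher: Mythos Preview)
Your proposal is correct and follows essentially the same approach as the paper's own proof: smoothness of $f$, then the cancellation $\Ea{\ve{\nabla f(x_n^*(h))}{x_{n+1}-\Ea{x_{n+1}}}}=0$ to replace $x_{n+1}$ by $\Ea{x_{n+1}}$ in the linear term, then Young's inequality with weight of order $uh^3$ on that term, followed by \eqref{eq:supgradsize}, \eqref{eq:expecelphaxn1xnh}, and \eqref{eq:xn1xnhdiff}. The paper collapses your first four steps into the single line
\[
\expec{f(x_{n+1})-f(x_n^*(h))}\leq uh^3\expec{\|\nabla f(x_n^*(h))\|^2}+\frac{M}{h^3}\expec{\|\Ea{x_{n+1}}-x_n^*(h)\|^2}+\frac{M}{2}\expec{\|x_{n+1}-x_n^*(h)\|^2},
\]
which is exactly your bound up to the harmless factor-of-two difference in the Young weight, absorbed in the $O(\cdot)$.
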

\begin{proof}[of Lemma~\ref{lm:contdiscdiff}]
Note that, we have 
	\begin{align*}
	&\expec{f(x_{n+1}(0))-f(x_n(h))}\\
	\leq & uh^3\expec{\|\nabla f(x_n(h))\|^2}+\frac{M}{h^3}\expec{\|\Ea{x_{n+1}(0)}-x_n(h)\|^2}+\frac{M}{2}\expec{\|x_{n+1}(0)-x_n(h)\|^2}\\
	\leq & uh^3O(M^2h^2\ev2+\ef2+Mdh^3)\\
	+&\frac{M}{h^3}O\left((h^{10}+h^{9}\kappa^{-1})\expec{\|v_n\|^2}+u^2(h^{12}+h^{7}\kappa^{-1})\expec{\|\nabla f(x_n)\|^2}+ud(h^{11}+h^{10}\kappa^{-1})+u^2h^{7}\kappa^{-1}\sigma^2+u^2h^{8}\right)\\
	+& \frac{M}{2} O\left(h^{6}\ev2+u^2h^{4}\ef2+u^2h^{8}+ udh^7+u^2h^{7}\kappa^{-1}\sigma^2\right)\\
	\leq & O\left(Mh^5\ev2+uh^3\ef2+dh^6+uh^{4}\kappa^{-1}\sigma^2+uh^{5}\right)
	\end{align*}
\end{proof}

\begin{lemma} \label{lm:sumvnsizebound}
	At iteration $n$, with the initial point $(x_n,v_n)$, for the updates \eqref{eq:rmpupdatexnh}, \eqref{eq:rmpupdatexn1}, and \eqref{eq:rmpupdatevn1}, we have
	\begin{align*}
	\sum_{n=0}^{N-1}\ev2\leq O\left(u^2h\sum_{n=0}^{N-1}\ef2+Ndu+Nu^2h^{3}\kappa^{-1}\sigma^2+Nu^2h^{4}\right) \numberthis\label{eq:sumvnsizebound}
	\end{align*}
\end{lemma}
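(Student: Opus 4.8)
The plan is to set up a one-step recursion for $\expec{\|v_{n+1}\|^2}$ from the velocity update \eqref{eq:rmpupdatevn1}, exhibit a geometric contraction with ratio $1-\Theta(h)$, and then sum over $n=0,\dots,N-1$ using the initialization $v_0=0$. Writing $v_{n+1}=e^{-2h}v_n-uhe^{-2(h-\alpha_nh)}g_{\nu,b}(x_\nh)+2\sqrt u\,\epsilon^{(3)}_{n+1}$ and expanding the square, the noise term $\|2\sqrt u\,\epsilon^{(3)}_{n+1}\|^2$ contributes $4u\,\expec{\|\epsilon^{(3)}_{n+1}\|^2}$; since by the construction of Appendix A of~\cite{shen2019randomized} we have $\epsilon^{(3)}_{n+1}\stackrel{d}{=}\int_0^he^{-2(h-s)}dB_s$ (match with \eqref{eq:vnstartdef}), this equals $\tfrac d4(1-e^{-4h})\le dh$, i.e.\ an $O(udh)$ per-step contribution. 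Because $\epsilon^{(3)}_{n+1}$ is zero-mean and independent of $\mathcal{F}_n$, the cross term $\expec{\langle e^{-2h}v_n,\epsilon^{(3)}_{n+1}\rangle}$ vanishes exactly; the cross term between $\epsilon^{(3)}_{n+1}$ and $g_{\nu,b}(x_\nh)$ does \emph{not} vanish (the Brownian increments driving $x_\nh$ and $v_{n+1}$ are coupled), but it carries a prefactor $uh$ and is absorbed by Cauchy--Schwarz and Young's inequality as a lower-order term.

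Next I would control the drift part $\|e^{-2h}v_n-uhe^{-2(h-\alpha_nh)}g_{\nu,b}(x_\nh)\|^2$. Applying Young's inequality with parameter $\Theta(h)$ (equivalently, handling the $v_n$--$g_{\nu,b}(x_\nh)$ inner product directly and charging the $\|v_n\|^2$ mass against the contraction budget) splits it into $(1+\Theta(h))e^{-4h}\|v_n\|^2+O(u^2h)\|g_{\nu,b}(x_\nh)\|^2$, where $(1+\Theta(h))e^{-4h}\le 1-\Theta(h)$ once $h\le 1/20$. It then remains to bound $\expec{\|g_{\nu,b}(x_\nh)\|^2}$: split it as $2\expec{\|g_{\nu,b}(x_\nh)-\nabla f(x_\nh)\|^2}+2\expec{\|\nabla f(x_\nh)\|^2}$, use Lemma~\ref{lm:graderrorxnhalf} on the first piece, and Lemma~\ref{lm:graddiffxnhalf} together with \eqref{eq:supgradsize} on the second, which yields $\expec{\|g_{\nu,b}(x_\nh)\|^2}=O\!\left(\ef2+M^2h^2\ev2+Mdh^3+h^3\kappa^{-1}\sigma^2+h^4\right)$ after discarding manifestly higher-order terms.

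Substituting and invoking $u=1/M$ (so that $u^2M^2h^3=h^3\ll h$), the feedback term $M^2h^2\expec{\|v_n\|^2}$ is dominated by the contraction, and I obtain a recursion of the form
\begin{align*}
\expec{\|v_{n+1}\|^2}\le (1-\Theta(h))\,\expec{\|v_n\|^2}+O(u^2h)\,\ef2+O(udh)+O\!\left(u^2h^4\kappa^{-1}\sigma^2\right)+O\!\left(u^2h^5\right).
\end{align*}
Unrolling this with $\expec{\|v_0\|^2}=0$, or equivalently summing over $n$ and rearranging, each per-step source term is amplified by $\sum_{j\ge0}(1-\Theta(h))^j=\Theta(1/h)$; the factors of $h$ then cancel and we arrive at \eqref{eq:sumvnsizebound}.

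I expect the main obstacle to be the one-step analysis: making the coefficient of $\expec{\|v_n\|^2}$ strictly below $1$ while simultaneously keeping the coefficient of $\ef2$ under control, since the $v_n$--$g_{\nu,b}(x_\nh)$ cross term forces a trade-off in the Young/AM--GM parameters, and the $M^2h^2\ev2$ term fed back through \eqref{eq:supgradsize} and the discretization-error terms must be dominated --- this is exactly where $u=1/M$ and $h\le 1/20$ are used. A secondary (but lengthy) task is tracking the powers of $h$, $\kappa$, and $\sigma^2$ carefully through Lemmas~\ref{lm:graddiffxnhalf}--\ref{lm:graderrorxnhalf} and \eqref{eq:supgradsize} so that only the terms displayed in \eqref{eq:sumvnsizebound} survive after the geometric summation.
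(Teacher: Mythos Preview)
Your approach is sound in spirit but yields a strictly weaker bound than the lemma: after unrolling your recursion and dividing by the $\Theta(h)$ contraction gap, the coefficient multiplying $\sum_n\ef2$ is $O(u^2)$, not the $O(u^2h)$ stated in \eqref{eq:sumvnsizebound}. The loss comes from the cross term $-2uhe^{-2(2h-\alpha_nh)}\langle v_n,g_{\nu,b}(x_\nh)\rangle$: Young's inequality with parameter $\Theta(h)$ is essentially the best you can do while preserving a $1-\Theta(h)$ contraction, and this forces an $O(u^2h)$ coefficient on $\|g_{\nu,b}(x_\nh)\|^2$ in the one-step recursion, hence $O(u^2)$ after the $\Theta(1/h)$ amplification. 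There is no way to beat this while tracking only $\|v_n\|^2$, because $\langle v_n,\nabla f(x_n)\rangle$ has no sign.

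The paper avoids this loss by working with the combined Lyapunov function $\tfrac{1}{2u}\|v_n\|^2+f(x_n)$. Along the continuous underdamped flow the cross term $-\langle v,\nabla f(x)\rangle$ coming from $\tfrac{d}{dt}\tfrac{1}{2u}\|v\|^2$ cancels exactly against $\tfrac{d}{dt}f(x(t))=\langle\nabla f(x),v\rangle$, leaving only friction dissipation $-\tfrac{2}{u}\|v\|^2$ and higher-order-in-$h$ remainders; this is Lemma~11 of \cite{shen2019randomized}, which contributes merely $O(uh^3)\ef2$. The discretization errors $\tfrac{1}{2u}(\|v_{n+1}\|^2-\|v_n^*(h)\|^2)$ and $f(x_{n+1})-f(x_n^*(h))$, bounded via \eqref{eq:vn1vnhnormdiff} and Lemma~\ref{lm:contdiscdiff}, then add only $O(uh^2)\ef2$ to the recursion, which after dividing by $hM$ gives the claimed $O(u^2h)$. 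Telescoping the potential part also uses the initialization $\expec{f(x_0)-f(x^*)}=O(d)$, absorbed into the $Ndu$ term. The missing factor of $h$ matters downstream: in the proof of Lemma~\ref{lm:gradfnvnsuminnerprodbound} the term $Mh\sum_n\ev2$ must be dominated by $\tfrac{1}{6}uh\sum_n\ef2$, and with your weaker bound this produces a competing $O(uh)\sum_n\ef2$ term that cannot be absorbed by making $h$ small.
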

\begin{proof}[of Lemma~\ref{lm:sumvnsizebound}]
	From Lemma 11 of \cite{shen2019randomized}, we have
	\begin{align*}
	&\expec{\frac{1}{2u}\|v_n(h)\|^2+f(x_n(h))}\\
	\leq & \expec{\frac{1}{2u}\|v_n\|^2+f(x_n)}-\frac{2}{3}hM\ev2+O\left(uh^3\ef2+dh\right)\numberthis\label{eq:potentialbound}
	\end{align*}
	From Lemma 11 we also have,
	\begin{align*}
	&\expec{\|v_{n+1}\|^2-\|v_n(h)\|^2}\\
	\leq & \frac{2}{h^2}\expec{\|v_{n+1}-v_n(h)\|^2}+4h^2\expec{\|v_n(h)\|^2}\\
	\leq & \frac{2}{h^2}O\left(h^4\ev2+u^2h^4\ef2+u^2h^{8}+ udh^5+u^2h^{7}\kappa^{-1}\sigma^2\right)\\
	+& 4h^2O(\ev2+u^2h^2\ef2+udh)\\
	\leq & O\left(h^2\ev2+u^2h^2\ef2+u^2h^{6}+ udh^3+u^2h^{5}\kappa^{-1}\sigma^2\right) \numberthis\label{eq:vn1vnhdiff}
	\end{align*}
	The second inequality above follows from \eqref{eq:supvsize}.
	From Lemma~\ref{lm:contdiscdiff}, we have
	\begin{align*}
	\expec{f(x_{n+1}(0))-f(x_n(h))}\leq  O\left(Mh^5\ev2+uh^3\ef2+dh^6+uh^{4}\kappa^{-1}\sigma^2+uh^{5}\right) 
	\end{align*}
	Now, from \eqref{eq:potentialbound}, \eqref{eq:vn1vnhdiff} and Lemma~\ref{lm:contdiscdiff}, we get
	\begin{align*}
	\expec{\frac{1}{2u}\|v_{n+1}\|^2+f(x_{n+1})}&=\expec{\frac{1}{2u}(\|v_{n+1}\|^2-\|v_n(h)\|^2)+f(x_{n+1})-f(x_n(h)}\\
	&+\expec{\frac{1}{2u}\|v_n(h)\|^2+f(x_n(h))}\\
	\leq & O\left(Mh^2\ev2+uh^2\ef2+uh^{6}+ dh^3+uh^{5}\kappa^{-1}\sigma^2\right)\\
	+& O\left(Mh^5\ev2+uh^3\ef2+dh^6+uh^{4}\kappa^{-1}\sigma^2+uh^{5}\right)\\
	+&\expec{\frac{1}{2u}\|v_n\|^2+f(x_n)}-\frac{2}{3}hM\ev2+O\left(uh^3\ef2+dh\right)
	\end{align*}
	Choosing $h$ such that, $\frac{1}{3}hM\geq Mh^2$, i.e., $ h\leq \frac{1}{3}$, we get
	\begin{align*}
	&\expec{\frac{1}{2u}\|v_{n+1}\|^2+f(x_{n+1})}\\
	\leq & O\left(uh^2\ef2+dh+uh^{4}\kappa^{-1}\sigma^2+uh^{5}\right)
	+\expec{\frac{1}{2u}\|v_n\|^2+f(x_n)}-\frac{1}{3}hM\ev2
	\end{align*}
	Summing both sides from $n=0$ to $N-1$, we get
	\begin{align*}
	\sum_{n=0}^{N-1}\expec{\frac{1}{2u}\|v_{n+1}\|^2+f(x_{n+1})}
	\leq & O\left(uh^2\sum_{n=0}^{N-1}\ef2+Ndh+Nuh^{4}\kappa^{-1}\sigma^2+Nuh^{5}\right)\\
	+&\expec{\frac{1}{2u}\sum_{n=0}^{N-1}\left(\|v_n\|^2+f(x_n)\right)}-\frac{1}{3}hM\sum_{n=0}^{N-1}\ev2
	\end{align*}
	Since, $\|v_0\|=0$, and $\expec{f(x_0)}-f(x^*)\leq O(d)$, and consequently, $\expec{f(x_0)-f(x_N)}\leq O(d)$, we have
	\begin{align*}
	&\frac{1}{3}hM\sum_{n=0}^{N-1}\ev2\leq O\left(uh^2\sum_{n=0}^{N-1}\ef2+Ndh+Nuh^{4}\kappa^{-1}\sigma^2+Nuh^{5}\right)\\
	&\sum_{n=0}^{N-1}\ev2\leq O\left(u^2h\sum_{n=0}^{N-1}\ef2+Ndu+Nu^2h^{3}\kappa^{-1}\sigma^2+Nu^2h^{4}\right)
	\end{align*}
\end{proof}
\begin{lemma}\label{lm:gradfnvnsuminnerprodbound}
	At iteration $n$, with the initial point $(x_n,v_n)$, for the updates \eqref{eq:rmpupdatexnh}, \eqref{eq:rmpupdatexn1}, and \eqref{eq:rmpupdatevn1}, we have
	\begin{align*}
	&\sum_{n=0}^{N-1}\expec{\|\nabla f(x_n)\|^2}\leq O\left(\frac{M}{h}\left\lvert\expec{\nabla f(x_N)^\top v_N}\right\rvert+MNd+Nh^{3}\kappa^{-1}\sigma^2+Nh^{4}\right)\\
	&\sum_{n=0}^{N-1}\ev2\leq O\left(u\left\lvert\expec{\nabla f(x_N)^\top v_N}\right\rvert+Ndu+Nu^2h^{3}\kappa^{-1}\sigma^2+Nu^2h^{4}\right)
	\end{align*}
\end{lemma}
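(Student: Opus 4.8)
The plan is to track the telescoping quantity $P_n\defeq\expec{\nabla f(x_n)^\top v_n}$, noting $P_0=0$ because $v_0=0$, and to extract a negative multiple of $\ef2$ from the continuous dynamics on each step. Fix $n$ and let $(x_n^*(t),v_n^*(t))_{t\in[0,h]}$ be the true underdamped diffusion \eqref{eq:xnstartdef}--\eqref{eq:vnstartdef} started at $(x_n,v_n)$, which has friction $\gamma=2$ and $u=1/M$. Applying the infinitesimal generator of this diffusion to the map $(x,v)\mapsto\nabla f(x)^\top v$ (linear in $v$, so the diffusion part of the generator kills it) and taking expectations yields the identity
\begin{align*}
\expec{\nabla f(x_n^*(h))^\top v_n^*(h)}-\expec{\nabla f(x_n)^\top v_n}=\int_0^h\expec{v_n^*(t)^\top\nabla^2 f(x_n^*(t))\,v_n^*(t)-2\,\nabla f(x_n^*(t))^\top v_n^*(t)-u\,\|\nabla f(x_n^*(t))\|^2}\,dt.
\end{align*}
The decisive point is that the term $-u\|\nabla f\|^2$ has a fixed sign, so I solve this identity for $u\int_0^h\expec{\|\nabla f(x_n^*(t))\|^2}\,dt$.

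Next I would lower bound $\int_0^h\expec{\|\nabla f(x_n^*(t))\|^2}\,dt\ge\tfrac h2\ef2-\int_0^h\expec{\|\nabla f(x_n^*(t))-\nabla f(x_n)\|^2}\,dt$ and control the error via $M$-smoothness and the path estimate \eqref{eq:supx0xtdiff} of Lemma~\ref{lm:contdiffprops}; I would bound the Hessian term by $M\int_0^h\expec{\|v_n^*(t)\|^2}\,dt$ and the cross term by Young's inequality, both using \eqref{eq:supgradsize}--\eqref{eq:supvsize}, choosing the Young parameter so that the $\|\nabla f\|^2$-integral that reappears carries a coefficient at most $\tfrac h8$ and is moved back to the left. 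I would then replace the continuous endpoint by the iterate, writing $\expec{\nabla f(x_n^*(h))^\top v_n^*(h)}=P_{n+1}+E_n$, and bound $|E_n|$ by Cauchy--Schwarz and Young's inequality through $\expec{\|x_{n+1}-x_n^*(h)\|^2}$, $\expec{\|v_{n+1}-v_n^*(h)\|^2}$ of Lemma~\ref{lm:lemma2eq} and through $\expec{\|v_n^*(h)\|^2}$, $\expec{\|\nabla f(x_{n+1})\|^2}$ of Lemma~\ref{lm:contdiffprops}, again tuning parameters so that the reappearing $\expec{\|\nabla f(x_{n+1})\|^2}$ has coefficient much smaller than $h$. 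Together with $u=1/M$ this yields a per-iteration inequality in which $\tfrac h3\ef2$ is dominated by $M(P_n-P_{n+1})$, an $O(h)$ multiple of $\expec{\|\nabla f(x_{n+1})\|^2}$, an $O(M^2h)$ multiple of $\ev2$, and an additive error $O(Mdh^2+h^{6}\kappa^{-1}\sigma^2+h^{7})$, once the small-coefficient $\ef2$ contributions have been absorbed on the left.

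Summing over $n=0,\dots,N-1$, the $P_n$ telescope to $-\expec{\nabla f(x_N)^\top v_N}$; the index-shifted sum $\sum_n\expec{\|\nabla f(x_{n+1})\|^2}$ is reabsorbed except for the single boundary term $\expec{\|\nabla f(x_N)\|^2}$, which I bound by $2M\expec{f(x_N)-f(x^*)}=O(Md)$ using convex $M$-smoothness together with the potential-decrease estimate already obtained in the proof of Lemma~\ref{lm:sumvnsizebound} (valid since $v_0=0$ and $\expec{f(x_0)-f(x^*)}=O(d)$). Then I substitute the bound $\sum_n\ev2\le O(u^2h\sum_n\ef2+Ndu+Nu^2h^{3}\kappa^{-1}\sigma^2+Nu^2h^{4})$ of Lemma~\ref{lm:sumvnsizebound}; with $u=1/M$ the term $O(M^2h)\sum_n\ev2$ becomes $O(h^2)\sum_n\ef2$ plus $O(MNdh+Nh^{4}\kappa^{-1}\sigma^2+Nh^{5})$, and since $h$ is below a universal constant the $O(h^2)\sum_n\ef2$ contribution is absorbed on the left. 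Dividing through by $h$ and using $1/u=M$ gives the first displayed inequality, and plugging that bound back into Lemma~\ref{lm:sumvnsizebound} (note $u^2h\cdot\tfrac Mh\,|\expec{\nabla f(x_N)^\top v_N}|=u\,|\expec{\nabla f(x_N)^\top v_N}|$) gives the second. The main obstacle is exactly this accounting: after each Young step and after the substitution from Lemma~\ref{lm:sumvnsizebound}, the net coefficient multiplying $\sum_n\ef2$ and the index-shifted $\sum_n\expec{\|\nabla f(x_{n+1})\|^2}$ must stay strictly below the leading $\tfrac h3$ so that it can be absorbed, and the $u=1/M$ scaling has to be threaded through consistently so that the dimension- and $\sigma^2$-dependence of the error matches the statement.
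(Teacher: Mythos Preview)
Your proposal is correct and follows essentially the same route as the paper. Two small differences are worth noting. First, the paper does not rederive the continuous-time decrease from the generator but simply quotes the inequality from Lemma~12 of \cite{shen2019randomized} (their ``(15)''), which already gives $\expec{\nabla f(x_n^*(h))^\top v_n^*(h)}\le P_n-\tfrac16 uh\,\ef2+O(Mh\,\ev2+uh^3\ef2+dh^2)$; your generator computation reproduces exactly this. Second, for the discrete-to-continuous correction $E_n$ the paper applies Young's inequality so that the ``large'' squared-gradient term is $\|\nabla f(x_n^*(h))\|^2$ rather than $\|\nabla f(x_{n+1})\|^2$; since $\|\nabla f(x_n^*(h))\|^2\le O(\ef2+M^2h^2\ev2+Mdh^3)$ by \eqref{eq:supgradsize}, this avoids the index shift and the boundary term $\expec{\|\nabla f(x_N)\|^2}$ altogether. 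Your handling of that boundary term via $\|\nabla f(x_N)\|^2\le 2M(f(x_N)-f(x^*))$ together with the potential estimate from Lemma~\ref{lm:sumvnsizebound} is fine (the residual $\sum_n\ef2$-dependence coming through the potential bound carries an extra $h^2$ and is absorbed), but swapping the roles in Young's inequality as the paper does is the cleaner shortcut.
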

\begin{proof}[of Lemma~\ref{lm:gradfnvnsuminnerprodbound}]
	From (15) in Lemma 12 of \cite{shen2019randomized} we have,
	\begin{align*}
	&\expec{\nabla f(x_n(h))^\top v_n(h)}\\
	\leq & \expec{\nabla f(x_n)^\top v_n}-\frac{1}{6}uh\ef2+O\left(Mh\ev2+uh^3\ef2+dh^2\right)\numberthis\label{eq:expecnablaxnvn}
	\end{align*}
	From Lemma 12 of \cite{shen2019randomized} we also have,
	\begin{align*}
	&\expec{\nabla f(x_{n+1})^\top v_{n+1}-\nabla f(x_n(h))^\top v_n(h)}\\
	\leq & \frac{2u}{h}\expec{\|\nabla f(x_{n+1})-\nabla f(x_n(h))\|^2}+\frac{2M}{h^2}\expec{\|v_{n+1}-v_n(h)\|^2}+uh^2\expec{\|\nabla f(x_n(h))\|^2}+Mh\expec{\|v_n(h)\|^2}\\
	\leq & \frac{2M}{h}\expec{\|x_{n+1}-x_n(h)\|^2}+\frac{2M}{h^2}\expec{\|v_{n+1}-v_n(h)\|^2}+uh^2\expec{\|\nabla f(x_n(h))\|^2}+Mh\expec{\|v_n(h)\|^2}
	\end{align*}
	Now from \eqref{eq:xn1xnhdiff}, \eqref{eq:vn1vnhnormdiff}, and Lemma~\ref{lm:contdiffprops} we have,
	\begin{align*}
	&\expec{\nabla f(x_{n+1})^\top v_{n+1}-\nabla f(x_n(h))^\top v_n(h)}\\
	\leq & \frac{2M}{h}O\left(h^{6}\ev2+u^2h^{4}\ef2+u^2h^{8}+ udh^7+u^2h^{7}\kappa^{-1}\sigma^2\right) \\
	+& \frac{2M}{h^2}O\left(h^4\ev2+u^2h^4\ef2+u^2h^{8}+ udh^5+u^2h^{7}\kappa^{-1}\sigma^2\right)\\
	+&uh^2O(M^2h^2\|v_n\|^2+\|\nabla f(x_n)\|^2+Mdh^3)
	+MhO(\|v_n\|^2+u^2h^2\|\nabla f(x_n)\|^2+udh)\\
	\leq& O\left(Mh\ev2+uh^{2}\ef2+dh^2+uh^{6}+ uh^{5}\kappa^{-1}\sigma^2\right)\numberthis\label{eq:expecnablaxn1vn1expecnablaxnvn}
	\end{align*}
	Combining \eqref{eq:expecnablaxnvn}, and \eqref{eq:expecnablaxn1vn1expecnablaxnvn}, we get
	\begin{align*}
	\expec{\nabla f(x_{n+1})^\top v_{n+1}}\leq &  \expec{\nabla f(x_n)^\top v_n}-\frac{1}{6}uh\ef2 \\
	+ &O\left(Mh\ev2+uh^{2}\ef2+dh^2+uh^{6}+ uh^{5}\kappa^{-1}\sigma^2\right).
	\end{align*}
	Summing both sides from $n=0$ to $N-1$, and using Lemma~\ref{lm:sumvnsizebound}, we get
	\begin{align*}
	&\sum_{n=0}^{N-1}\expec{\nabla f(x_{n+1})^\top v_{n+1}}\\
	\leq & \sum_{n=0}^{N-1} \expec{\nabla f(x_n)^\top v_n}-\frac{1}{6}uh\sum_{n=0}^{N-1}\ef2\\
	+& O\left(Mh\sum_{n=0}^{N-1}\ev2+uh^{2}\sum_{n=0}^{N-1}\ef2+Ndh^2+Nuh^{6}+ Nuh^{5}\kappa^{-1}\sigma^2\right)\\
	\leq & \sum_{n=0}^{N-1} \expec{\nabla f(x_n)^\top v_n}-\frac{1}{6}uh\sum_{n=0}^{N-1}\ef2\\
	+& O\left(MhO\left(u^2h\sum_{n=0}^{N-1}\ef2+Ndu+Nu^2h^{3}\kappa^{-1}\sigma^2+Nu^2h^{4}\right)\right.\\
	&\left.+uh^{2}\sum_{n=0}^{N-1}\ef2+Ndh^2+Nuh^{6}+ Nuh^{5}\kappa^{-1}\sigma^2\right)\\
	\leq & \sum_{n=0}^{N-1} \expec{\nabla f(x_n)^\top v_n}-\frac{1}{6}uh\sum_{n=0}^{N-1}\ef2\\
	+& O\left(uh^2\sum_{n=0}^{N-1}\ef2+Ndh+Nuh^{4}\kappa^{-1}\sigma^2+Nuh^{5}\right)
	\end{align*}
	Now choosing $\frac{1}{24}uh\geq uh^2$, and $v_0=0$, we have,
	\begin{align*}
	&\frac{1}{8}uh\sum_{n=0}^{N-1}\ef2\leq O\left(\left\lvert\expec{\nabla f(x_N)^\top v_N}\right\rvert+Ndh+Nuh^{4}\kappa^{-1}\sigma^2+Nuh^{5}\right)\\
	&\sum_{n=0}^{N-1}\ef2\leq O\left(\frac{M}{h}\left\lvert\expec{\nabla f(x_N)^\top v_N}\right\rvert+MNd+Nh^{3}\kappa^{-1}\sigma^2+Nh^{4}\right)
	\end{align*}
	Using Lemma~\ref{lm:sumvnsizebound}, we have,
	\begin{align*}
	\sum_{n=0}^{N-1}\ev2\leq& O\left(u\left\lvert\expec{\nabla f(x_N)^\top v_N}\right\rvert+Ndu+Nu^2h^{3}\kappa^{-1}\sigma^2+Nu^2h^{4}\right)
	\end{align*}
\end{proof}
\begin{proof}[of Theorem~\ref{th:rmpmainresult}]
	From Theorem 3 in \cite{shen2019randomized}, we have
	\begin{align*}
	q_N\leq & e^{-\frac{Nh}{2\kappa}}q_0+\sum_{n=1}^{N}\frac{2\kappa}{h}\left(2\expec{\|\Ea{v_{n+1}}-v_n^*(h)\|^2}+3\expec{\|\Ea{x_{n+1}}-x_n^*(h)\|^2}\right)\\
	+&\sum_{n=1}^{N}\left(2\expec{\|{v_{n+1}}-v_n^*(h)\|^2}+3\expec{\|{x_{n+1}}-x_n^*(h)\|^2}\right) \numberthis\label{eq:qNmother}
	\end{align*}
	where $q_N=\expec{\|x_N-y_N\|^2+\|x_N+v_N-y_N-w_N\|^2}$.
	We also have,
	\begin{align*}
	e^{-\frac{Nh}{2\kappa}}q_0\leq \frac{\epsilon^2d}{4m}\numberthis\label{eq:q0bound}
	\end{align*} 
	From Lemma~\ref{lm:lemma2eq},
	\begin{align*}
	&\sum_{n=1}^{N}\frac{2\kappa}{h}\left(2\expec{\|\Ea{v_{n+1}}-v_n^*(h)\|^2}+3\expec{\|\Ea{x_{n+1}}-x_n^*(h)\|^2}\right)\\
	\leq & O\left((h^{6}+\kappa h^{7})\sum_{n=1}^{N}\ev2+u^2(\kappa h^{9}+h^{4})\sum_{n=1}^{N}\ef2\right.\\
	&\left.+N\kappa u^2h^{5}+ Nu^2h^{4}\sigma^2+Nud(\kappa h^{8}+h^{7})\right)\numberthis\label{eq:vn1alphaxn1alpha}
	\end{align*}
	\begin{align*}
	&\sum_{n=1}^{N}\left(2\expec{\|{v_{n+1}}-v_n^*(h)\|^2}+3\expec{\|{x_{n+1}}-x_n^*(h)\|^2}\right)\\
	\leq & \sum_{n=1}^{N}\left(O\left(h^4\ev2+u^2h^4\ef2+u^2h^{8}+ udh^5+u^2h^{7}\kappa^{-1}\sigma^2\right)\right.\\
	&\left. O\left(h^{6}\ev2+u^2h^{4}\ef2+u^2h^{8}+ udh^7+u^2h^{7}\kappa^{-1}\sigma^2\right)\right)\\
	\leq & O\left(h^{4}\sum_{n=1}^{N}\ev2+u^2h^{4}\sum_{n=1}^{N}\ef2+Nu^2h^{8}+ Nudh^5+Nu^2h^{7}\kappa^{-1}\sigma^2\right) \numberthis\label{eq:vn1xn1}
	\end{align*}
	Combining \eqref{eq:vn1alphaxn1alpha}, and \eqref{eq:vn1xn1}, we have
	\begin{align*}
	&\sum_{n=1}^{N}\frac{2\kappa}{h}\left(2\expec{\|\Ea{v_{n+1}}-v_n^*(h)\|^2}+3\expec{\|\Ea{x_{n+1}}-x_n^*(h)\|^2}\right)\\
	+&\sum_{n=1}^{N}\left(2\expec{\|{v_{n+1}}-v_n^*(h)\|^2}+3\expec{\|{x_{n+1}}-x_n^*(h)\|^2}\right)\\
	\leq & O\left((h^{4}+\kappa h^7)\sum_{n=1}^{N}\ev2+u^2(h^{4}+\kappa h^9+h^{4})\sum_{n=1}^{N}\ef2\right.\\
	&\left.+Nudh^5+Nud(\kappa h^{8}+h^{7})+Nu^2h^{4}\sigma^2+N\kappa u^2h^{5}\right) \numberthis\label{eq:theorem3intermed}
	\end{align*}
	From the proof of Theorem 3 of \cite{shen2019randomized} we have,
	\begin{align*}
	\left\lVert\expec{\nabla f(x_N)^\top v_N}\right\rVert\leq 4d + 6Mq_N
	\end{align*}
	Then we have,
	\begin{align*}
	\sum_{n=0}^{N-1}\ev2\leq& O\left(q_N+Ndu+Nu^2h^{3}\kappa^{-1}\sigma^2+Nu^2h^{4}\right)
	\numberthis\label{eq:finalvn2sizebound}
	\end{align*}
	and
	\begin{align*}
	\sum_{n=0}^{N-1}\ef2\leq O\left(\frac{dM}{h}+\frac{M^2}{h}q_N+MNd+Nh^{3}\kappa^{-1}\sigma^2+Nh^{4}\right)\numberthis\label{eq:finalgradsizebound}
	\end{align*}
	From \eqref{eq:theorem3intermed}, \eqref{eq:finalvn2sizebound}, and \eqref{eq:finalgradsizebound}, and setting $N=\frac{2\kappa}{h}\log\left(\frac{20}{\epsilon^2}\right)$ we have
	\begin{align*}
	&\sum_{n=1}^{N}\frac{2\kappa}{h}\left(2\expec{\|\Ea{v_{n+1}}-v_n^*(h)\|^2}+3\expec{\|\Ea{x_{n+1}}-x_n^*(h)\|^2}\right)\\
	+&\sum_{n=1}^{N}\left(2\expec{\|{v_{n+1}}-v_n^*(h)\|^2}+3\expec{\|{x_{n+1}}-x_n^*(h)\|^2}\right)\\
	\leq & O\left((h^3+\kappa h^7)q_N+Ndu(h^4+\kappa h^7)+duh^3+duh^{3}+Nu^2h^{4}\sigma^2+Nu^2\kappa h^{5}\right)\\
	\leq & O\left((h^3+\kappa h^7)q_N+\frac{d}{m}(h^3+\kappa h^6)\log\left(\frac{1}{\epsilon}\right)+\frac{h^{3}\kappa^{-1}}{m^2}\sigma^2\log\left(\frac{1}{\epsilon}\right)+\frac{h^{4}}{m^2}\log\left(\frac{1}{\epsilon}\right)\right)
	\end{align*}
	From \eqref{eq:qNmother}, and \eqref{eq:q0bound},
	\begin{align*}
	q_N\leq \frac{\epsilon^2d}{4m}+O\left((h^3+\kappa h^7)q_N+\left(\frac{d}{m}(h^3+\kappa h^6)+\frac{h^3}{Mm}\sigma^2+\frac{h^4}{m^2}\right)\log\left(\frac{1}{\epsilon}\right)\right)
	\end{align*}
	Using, $(h^3+\kappa h^7)\leq 1/2$, we have,
	\begin{align*}
	\frac{q_N}{2}\leq \frac{\epsilon^2d}{4m}+O\left(\left(\frac{d}{m}(h^3+\kappa h^6)+\frac{h^3}{Mm}\sigma^2+\frac{h^4}{m^2}\right)\log\left(\frac{1}{\epsilon}\right)\right)
	\end{align*}
	Choosing, $h=C\min\left(\frac{(\epsilon\sqrt{m})^\frac{1}{3}}{(d\kappa)^\frac{1}{6}\log\left(\frac{1}{\epsilon}\right)^\frac{1}{6}},\min\left(\left(\frac{m}{d}\right)^\frac{1}{3},\left(\frac{Mm}{16\sigma^2}\right)^\frac{1}{3},\sqrt{m}\right)\epsilon^\frac{2}{3}\log\left(\frac{1}{\epsilon}\right)^{-\frac{2}{3}}\right)$, we get,
	\begin{align*}
	\expec{\|x_N-y_N\|^2}\leq q_N\leq \frac{\epsilon^2d}{m}
	\end{align*}
	So, the iteration complexity is given by,
	\begin{align*}
	N=\tilde{O}\left(\max\left(\frac{d^\frac{1}{6}\kappa^\frac{7}{6}}{(\epsilon\sqrt{m})^\frac{1}{3}},\frac{\kappa\max\left(\left(\frac{d}{m}\right)^\frac{1}{3},\left(\frac{\sigma^2}{Mm}\right)^\frac{1}{3},\frac{1}{\sqrt{m}}\right)}{\epsilon^\frac{2}{3}}\right)\right)
	\end{align*}
	The total number of zeroth-order oracle calls are given by,
	\begin{align*}
	Nb= \tilde{O}\left(\max\left(\frac{d^\frac{5}{3}\kappa^\frac{8}{3}}{\epsilon^\frac{4}{3}},\frac{d\kappa^2\max\left(\left(\frac{d}{m}\right)^\frac{1}{3},\left(\frac{\sigma^2}{Mm}\right)^\frac{1}{3},\frac{1}{\sqrt{m}}\right)^4}{\epsilon^\frac{8}{3}}\right)\right)
	\end{align*}
\end{proof}

\section{Proofs for Section~\ref{as:lsi}}

\begin{proof}[of Theorem~\ref{th:lsilmc}]
	Let us define the following continuous time SDE with the initial point $\hx_0$:
	\begin{align*}
	\hat x_t=-g_{\nu,b}(\hat x_0)dt+\sqrt{2}dW_n
	\end{align*}
	Observe that $\hat x_h$ has the same distribution as $x_{n+1}$ when $\hat x_0=x_n$. Let $z$ denote $(\{u_i\}_{i=1}^{b},\{\xi_i\}_{i=1}^{b}\})$. To show the dependence of $g_{\nu,b}(\hat \hx_0)$ on $z$ we will use $g_{\nu,b}(\hat x_0,z)$ to denote $g_{\nu,b}(\hat x_0)$ just for this proof. Let $\rho_{t0z}(\hx_t,\hx_0,z)$ be the joint distribution of $\hx_t$, $\hx_0$, and $z$. Observe that conditioned on $\hx_0$, and $z$, $g_{\nu,b}(\hat x_0,z)$ is deterministic. Then by Fokker-Plank equation, we have 
	\begin{align*}	
	\pd{\rho_{t|0,z}(\hx_t|\hx_0,z)}{t}=\nabla\cdot \left(\rho_{t|0,z}(\hx_t|\hx_0,z)g_{\nu,b}(\hat \hx_0,z)\right)+\Delta \rho_{t|0,z}(\hx_t|\hx_0,z)
	\end{align*}
	Then the time evolution of $\rho_t(x)$ is given by
	\begin{align*}
	&\pd{\rho_t(x)}{t}=\expect{x_0,z}{\pd{\rho_{t|0,z}(x|x_0,z)}{t}}\\
	\leq & \expect{x_0,z}{\nabla\cdot \left(\rho_{t|0,z}(\hx_t|\hx_0,z)g_{\nu,b}(\hat \hx_0,z)\right)+\Delta \rho_{t|0,z}(\hx_t|\hx_0,z)}\\
	=&\expect{x_0,z}{\nabla\cdot \left(\rho_{t|0,z}(\hx_t|\hx_0,z)g_{\nu,b}(\hat \hx_0,z)\right)}+\Delta \rho_{t}(\hx_t)\\
	=&\int_{\mathbb{R}^d}\int_{\mathbb{R}^{2d}}\nabla\cdot \left(\rho_{t,0,z}(\hx_t,\hx_0,z)g_{\nu,b}(\hat \hx_0,z)\right)dx_0dz+\Delta \rho_{t}(\hx_t)\\
	=&\nabla\cdot\left(\rho_t(x)\expect{0,z|t}{g_{\nu,b}(\hx_0,z)|\hx_t=x}\right)+\Delta \rho_{t}(\hx_t) \numberthis\label{eq:rhottimeevol}
	\end{align*}
	Now, as shown in \cite{vempala2019rapid} we have,
	\begin{align*}
	\pd{H_\pi(\rho_t(x))}{t}=\int_{\mathbb{R}^d}\pd{\rho_t(x)}{t}\log \left(\frac{\rho_t(x)}{\pi(x)}\right)dx
	\end{align*}
	Then using \eqref{eq:rhottimeevol}, we have
	\begin{align*}
	&\pd{H_\pi(\rho_t(x))}{t}\\
	=&\int_{\mathbb{R}^d}\left(\nabla\cdot\left(\rho_t(x)\expect{0,z|t}{g_{\nu,b}(\hx_0,z)|\hx_t=x}\right)+\Delta \rho_{t}(\hx_t)\right)\log \left(\frac{\rho_t(x)}{\pi(x)}\right)dx\\
	=&\int_{\mathbb{R}^d}\nabla\cdot\left(\left(\rho_t(x)\expect{0,z|t}{g_{\nu,b}(\hx_0,z)|\hx_t=x}\right)+\nabla \rho_{t}(\hx_t)\right)\log \left(\frac{\rho_t(x)}{\pi(x)}\right)dx\\
	=&\int_{\mathbb{R}^d}\nabla\cdot\left(\rho_t(x)\left(\nabla\log \left(\frac{\rho_t(x)}{\pi(x)}\right)+\expect{0,z|t}{g_{\nu,b}(\hx_0,z)|\hx_t=x}-\nabla f(x)\right)\right)\log \left(\frac{\rho_t(x)}{\pi(x)}\right)dx
	\end{align*}
	Now we use the fact that $\nabla\cdot(ax)=ax\cdot\nabla a+a\nabla\cdot x$ where $a$ is a scalar, and $x$ is a vector:
	\begin{align*}
	&\pd{H_\pi(\rho_t(x))}{t}\\
	=&\int_{\mathbb{R}^d}\nabla\cdot\left(\rho_t(x)\left(\nabla\log \left(\frac{\rho_t(x)}{\pi(x)}\right)+\expect{0,z|t}{g_{\nu,b}(\hx_0,z)|\hx_t=x}-\nabla f(x)\right)\log \left(\frac{\rho_t(x)}{\pi(x)}\right)\right)dx\\
	-& \int_{\mathbb{R}^d}\rho_t(x)\left\langle\left(\nabla\log \left(\frac{\rho_t(x)}{\pi(x)}\right)+\expect{0,z|t}{g_{\nu,b}(\hx_0,z)|\hx_t=x}-\nabla f(x)\right),\nabla\log \left(\frac{\rho_t(x)}{\pi(x)}\right)\right\rangle dx
	\end{align*}
	Now \textcolor{black}{as $\rho_t(x)\left(\nabla\log \left(\frac{\rho_t(x)}{\pi(x)}\right)+\expect{0,z|t}{g_{\nu,b}(\hx_0,z)|\hx_t=x}-\nabla f(x)\right)\log \left(\frac{\rho_t(x)}{\pi(x)}\right)$ decays to $0$ as $x$ goes to infinity,} we have, 
	\begin{align*}
	\int_{\mathbb{R}^d}\nabla\cdot\left(\rho_t(x)\left(\nabla\log \left(\frac{\rho_t(x)}{\pi(x)}\right)+\expect{0,z|t}{g_{\nu,b}(\hx_0,z)|\hx_t=x}-\nabla f(x)\right)\log \left(\frac{\rho_t(x)}{\pi(x)}\right)\right)dx=0
	\end{align*}
	Then we get, 
	\begin{align*}
	&\pd{H_\pi(\rho_t(x))}{t}\\
	=&-\int_{\mathbb{R}^d}\rho_t(x)\left\langle\left(\nabla\log \left(\frac{\rho_t(x)}{\pi(x)}\right)+\expect{0,z|t}{g_{\nu,b}(\hx_0,z)|\hx_t=x}-\nabla f(x)\right),\nabla\log \left(\frac{\rho_t(x)}{\pi(x)}\right)\right\rangle dx\\
	=&-J_\pi(\rho_{t}(x))-\int_{\mathbb{R}^d}\int_{\mathbb{R}^d}\int_{\mathbb{R}^{2d}}\rho_t(x,\hx_0,z)\left\langle g_{\nu,b}(\hx_0,z)-\nabla f(x),\nabla\log \left(\frac{\rho_t(x)}{\pi(x)}\right)\right\rangle dzdxd\hx_0\\
	=&-J_\pi(\rho_{t}(x))+\expect{t0z}{\left\langle \nabla f(\hx_t)- g_{\nu,b}(\hx_0,z),\nabla\log \left(\frac{\rho_t(x)}{\pi(x)}\right)\right\rangle}\numberthis\label{eq:entropyevolboundintermediate}
	\end{align*}
	The second equality above follows from \eqref{eq:HJdef}, and in the last line we have substituted $x_t$ in place of $x$. Now we will upper bound the second term above. 
	\begin{align*}
	&\expect{t0z}{\left\langle \nabla f(\hx_t)- g_{\nu,b}(\hx_0,z),\nabla\log \left(\frac{\rho_t(x)}{\pi(x)}\right)\right\rangle}\\
	\leq & \expect{t0z}{\| \nabla f(\hx_t)- g_{\nu,b}(\hx_0,z)\|^2}+\frac{1}{4}\expect{t0z}{\left\|\nabla\log \left(\frac{\rho_t(x)}{\pi(x)}\right)\right\|^2}\\
	\leq & 2M^2\expect{t0}{\| \hx_t- \hx_0\|^2}+2\expect{0z}{\| \nabla f(\hx_0)- g_{\nu,b}(\hx_0,z)\|^2}+\frac{1}{4}J_{\pi}(\rho_t(x)) \numberthis\label{eq:expecinnerprodbound}
	\end{align*}
	Now, from Lemma~\ref{lm:zogradvar}, we have,
	\begin{align*}
	\expect{0z}{\| \nabla f(\hx_0)- g_{\nu,b}(\hx_0,z)\|^2}
	\leq  \frac{4(d+5)\expect{0z}{\|\nabla f(\hx_0)\|^2}}{b}+C_1\numberthis\label{eq:zolmclsigradvar}
	\end{align*}
	where $C_1=\frac{4(d+5)\sigma^2}{b}+\frac{3\nu^2M^2(d+3)^3}{2}$. We also have, with $\tau_0\sim N(0,\mathbf{I_d})$
	\begin{align*}
	&\expect{t0}{\| \hx_t- \hx_0\|^2}\\
	=&\expect{t0}{\| -tg_{\nu,b}(\hx_0,z)+\sqrt{2t}\tau_0\|^2}\\
	\leq & 2dt+2t^2\expect{0z}{\| \nabla f(\hx_0)- g_{\nu,b}(\hx_0,z)\|^2}+2t^2\expect{0}{\|\nabla f(\hx_0)\|^2} \numberthis\label{eq:expecxtx0boundlmclsi}
	\end{align*}
	Combining \eqref{eq:expecinnerprodbound}, \eqref{eq:zolmclsigradvar}, and \eqref{eq:expecxtx0boundlmclsi}, for $t\leq 1/(2M)$ we get
	\begin{align*}
	&\expect{t0z}{\left\langle \nabla f(\hx_t)- g_{\nu,b}(\hx_0,z),\nabla\log \left(\frac{\rho_t(x)}{\pi(x)}\right)\right\rangle}\\
	\leq & \frac{1}{4}J_{\pi}(\rho_t(x))+4M^2td+(2+4M^2t^2)\expect{0z}{\| \nabla f(\hx_0)- g_{\nu,b}(\hx_0,z)\|^2}+4M^2t^2\expect{0}{\|\nabla f(\hx_0)\|^2}  \\
	\leq & \frac{1}{4}J_{\pi}(\rho_t(x))+4M^2td+3\left(\frac{4(d+5)\expect{0}{\|\nabla f(\hx_0)\|^2}}{b}+C_1\right)+4M^2t^2\expect{0}{\|\nabla f(\hx_0)\|^2}  \\
	\leq & \frac{1}{4}J_{\pi}(\rho_t(x))+4M^2td+3C_1+\left(\frac{12(d+5)}{b}+4M^2t^2\right)\expect{0}{\|\nabla f(\hx_0)\|^2} \\
	\leq & \frac{1}{4}J_{\pi}(\rho_t(x))+4M^2td+3C_1+\left(\frac{12(d+5)}{b}+4M^2t^2\right)\left(\frac{4M^2}{\lambda}H_{\pi}(\rho_0(x))+2Md\right) \numberthis\label{eq:entropyevolsecondtermbound}
	\end{align*}
	We get the last inequality using Lemma 12 of \cite{vempala2019rapid}. Now combining, \eqref{eq:entropyevolboundintermediate}, and \eqref{eq:entropyevolsecondtermbound}, we get,
	\begin{align*}
	&\pd{H_\pi(\rho_t(x))}{t}\\
	\leq & -\frac{3}{4}J_{\pi}(\rho_t(x))+4M^2td+3C_1+\left(\frac{12(d+5)}{b}+4M^2t^2\right)\left(\frac{4M^2}{\lambda}H_{\pi}(\rho_0(x))+2Md\right) 
	\end{align*}
	Using \eqref{eq:JHrel}, we get
	\begin{align*}
	&\pd{H_\pi(\rho_t(x))}{t}\\
	\leq & -\frac{3\lambda}{2}H_{\pi}(\rho_t(x))+4M^2td+3C_1+\left(\frac{12(d+5)}{b}+4M^2t^2\right)\left(\frac{4M^2}{\lambda}H_{\pi}(\rho_0(x))+2Md\right)\numberthis\label{eq:entropyevolbound}
	\end{align*}
	Taking $t\leq h$, we get,
	\begin{align*}
	&\pd{H_\pi(\rho_t(x))}{t}\\
	\leq & -\frac{3\lambda}{2}H_{\pi}(\rho_t(x))+4M^2hd+3C_1+\left(\frac{12(d+5)}{b}+4M^2h^2\right)\left(\frac{4M^2}{\lambda}H_{\pi}(\rho_0(x))+2Md\right)
	\end{align*}
	Multiplying both sides with $e^{\frac{3\lambda t}{2}}$, and integrating from $t=0$ to $h$, we get
	\begin{align*}
	&e^\frac{3\lambda h}{2}H_\pi(\rho_h(x))-H_\pi(\rho_0(x))\\
	\leq & \frac{2(e^{\frac{3\lambda h}{2}}-1)}{3\lambda}\left(4M^2hd+3C_1+\left(\frac{12(d+5)}{b}+4M^2h^2\right)\left(\frac{4M^2}{\lambda}H_{\pi}(\rho_0(x))+2Md\right)\right)\\
	\leq & 2h\left(4M^2hd+3C_1+\left(\frac{12(d+5)}{b}+4M^2h^2\right)\left(\frac{4M^2}{\lambda}H_{\pi}(\rho_0(x))+2Md\right)\right)\\
	=&\left(8M^2h^2d+3hC_1+\left(\frac{24(d+5)h}{b}+8M^2h^3\right)Md\right)+\frac{4M^2}{\lambda}\left(\frac{24(d+5)h}{b}+8M^2h^3\right)H_{\pi}(\rho_0(x))
	\end{align*}
	As in \cite{vempala2019rapid}, in the penultimate step we use the fact $e^a\leq 1+2a$ for $0<a=\frac{3\lambda h}{2}$, and $h\leq \frac{2}{3\lambda}$. Hence, we have
	\begin{align*}
	H_\pi(\rho_h(x))&\leq e^{-\frac{3\lambda h}{2}}\left(1+\frac{4M^2}{\lambda}\left(\frac{24(d+5)h}{b}+8M^2h^3\right)\right)H_\pi(\rho_0(x)) \\
	+&e^{-\frac{3\lambda h}{2}}\left(8M^2h^2d+3hC_1+\left(\frac{24(d+5)h}{b}+8M^2h^3\right)Md\right).
	\end{align*}
	Choosing $b\geq \frac{384 M^2(d+5)}{\lambda^2}$, and $h\leq \frac{\lambda }{12M^2}$, we get, $$1+\frac{4M^2}{\lambda}\left(\frac{24(d+5)h}{b}+8M^2h^3\right)\leq 1+\frac{\lambda h}{2}\leq e^{\frac{\lambda h}{2}}.$$
	Then we have,
	\begin{align*}
	H_\pi(\rho_h(x))\leq e^{-\lambda h}H_\pi(\rho_0(x))+\left(8M^2h^2d+3hC_1+\left(\frac{24(d+5)h}{b}+8M^2h^3\right)Md\right)
	\end{align*}
	Observe that when $\hx_0=x_n$, $\rho_0$ is same as $\varpi_{n}$, and then $\rho_h(x)$ is same as $\varpi_{n+1}$. Then
	\begin{align*}
	&H_\pi(\varpi_{n+1})\\
	\leq& e^{-\lambda h}H_\pi(\varpi_{n})+\left(8M^2h^2d+3hC_1+\left(\frac{24(d+5)h}{b}+8M^2h^3\right)Md\right)\\
	\leq &e^{-(n+1)\lambda h}H_\pi(\varpi_{0})+\frac{1}{1-e^{-\lambda h}}\left(8M^2h^2d+3hC_1+\left(\frac{24(d+5)h}{b}+8M^2h^3\right)Md\right)
	\end{align*}
	Choosing $n=N=\frac{1}{\lambda h}\log\left(\frac{\epsilon^2}{H_\pi(\varpi_{0})}\right)$, and using $1-e^{-\lambda h}\geq \frac{\lambda h}{2}$, for $h\leq \frac{1}{\lambda}$, we get
	\begin{align*}
	&H_\pi(\varpi_{N})\\
	\leq & \epsilon^2 +\left(\frac{16M^2hd}{\lambda}+\frac{6}{\lambda}\left(\frac{4(d+5)\sigma^2}{b}+\frac{3\nu^2M^2(d+3)^3}{2}\right)+\left(\frac{48(d+5)}{b}+16M^2h^2\right)\frac{Md}{\lambda}\right)
	\end{align*}
	Choosing $b$, $\nu$, and $h$ as in \eqref{eq:lsilmcparamchoice}, we get 
	\begin{align*}
	H_\pi(\varpi_{N})=O(\epsilon^2)
	\end{align*}
	Using, \eqref{eq:w2hinequality}, we get,
	\begin{align*}
	W_2(\varpi_N,\pi)=O(\epsilon)
	\end{align*}
\end{proof}

\section{Proofs for Section~\ref{sec:noise}}

\begin{proof}[of Lemma~\ref{lm:zogradesterrordiffnoise}]
First note that, we have
	\begin{align*}
	g_{\nu,b}(\theta)-\nabla f_\nu(\theta)=&\frac1b\sum_{i=1}^{b}\frac{F(\theta+\nu u_i,\xi_i)-F(\theta,\xi_i')}{\nu}u_i-\nabla f_\nu(\theta)\\
	=&\frac1b\sum_{i=1}^{b}\frac{f(\theta+\nu u_i)-f(\theta)}{\nu}u_i-\nabla f_\nu(\theta)+\frac1b\sum_{i=1}^{b}\frac{\xi_i-\xi_i'}{\nu}u_i.
\end{align*}
Hence, we have
\begin{align*}
	\expec{\left\|g_{\nu,b}(\theta)-\nabla f_\nu(\theta)\right\|^2}=&\expec{\left\|\frac1b\sum_{i=1}^{b}\frac{f(x+\nu u_i)-f(\theta)}{\nu}u_i-\nabla f_\nu(\theta)\right\|^2}+\expec{\left\|\frac1b\sum_{i=1}^{b}\frac{\xi_i-\xi_i'}{\nu}u_i\right\|^2}\\
	+& 2\expec{\left\langle\left(\frac1b\sum_{i=1}^{b}\frac{f(x+\nu u_i)-f(\theta)}{\nu}u_i-\nabla f_\nu(\theta)\right),\left(\frac1b\sum_{i=1}^{b}\frac{\xi_i-\xi_i'}{\nu}u_i\right)\right\rangle}.
	\end{align*}
	Now note that, using independence of $\xi_i,\xi_i'$, and $u_i$, we have $\forall~i$
	\begin{align*}
	&\expec{\left\langle\left(\frac{f(x+\nu u_i)-f(\theta)}{\nu}u_i-\nabla f_\nu(\theta)\right),\left(\frac{\xi_i-\xi_i'}{\nu}u_i\right)\right\rangle}\\
	=&\expec{\left\langle\frac{f(x+\nu u_i)-f(\theta)}{\nu}u_i-\nabla f_\nu(\theta),u_i\right\rangle}\expec{\frac{\xi_i-\xi_i'}{\nu}}=0
	\end{align*}
	We also have, $\forall~i\neq j$,
	\begin{align*}
	&\expec{\left\langle\left(\frac{f(x+\nu u_i)-f(\theta)}{\nu}u_i-\nabla f_\nu(\theta)\right),\left(\frac{\xi_j-\xi_j'}{\nu}u_j\right)\right\rangle}\\
	=&\expec{\left\langle\frac{f(x+\nu u_i)-f(\theta)}{\nu}u_i-\nabla f_\nu(\theta),u_j\right\rangle}\expec{\frac{\xi_j-\xi_j'}{\nu}}=0\numberthis\label{eq:crossprod}
	\end{align*}
	Using, Lemma~\ref{lm:zogradvar}, we hence have,	
	\begin{align*}
	\expec{\left\|\frac1b\sum_{i=1}^{b}\frac{f(x+\nu u_i)-f(\theta)}{\nu}u_i-\nabla f_\nu(\theta)\right\|^2}\leq \frac{2(d+5)\|\nabla f(\theta)\|^2}{b}+\frac{\nu^2 M^2(d+3)^3}{2b}.\numberthis\label{eq:truegraderror}
	\end{align*}
	Furthermore, we have
	\begin{align*}
	\expec{\left\|\frac1b\sum_{i=1}^{b}\frac{\xi_i-\xi_i'}{\nu}u_i\right\|^2}=\frac{1}{b^2}\sum_{i=1}^{b}\expec{\frac{(\xi_i-\xi_i')^2}{\nu^2}}\expec{\|u_i\|^2}=\frac{2d\sigma^2}{b\nu^2}.\numberthis\label{eq:noisevar}
	\end{align*}
	Combining, \eqref{eq:truegraderror}, \eqref{eq:crossprod}, and \eqref{eq:noisevar}, we obtain Lemma~\ref{lm:zogradesterrordiffnoise}.	
\end{proof}

\begin{proof}[of Theorem~\ref{thm:zlmcdiffnoise}]
	Using Lemma~\ref{lm:zogradesterrordiffnoise}, \eqref{eq:lmcW2recfinal} changes to,
	\begin{align*}
	W_2(\varpi_n,\pi)
	\leq & (1-0.5mh)^nW_2(\varpi_0,\pi)  + \frac{3.3M\sqrt{hd}}{m}+\frac{2\nu M\sqrt{d}}{m}+\frac{\nu M\sqrt{h}}{2\sqrt{mb}}(d+3)^\frac{3}{2}\\
	+&\frac{3\sqrt{h(d+5)(\frac{\sigma^2}{\nu^2}+2Md)}}{\sqrt{mb}}.
	\end{align*}
	Now the last term involves $\nu$ in the denominator. To counter the effect we have to increase the sample size to $b=\frac{d}{\epsilon^2}$.
\end{proof}

\begin{proof}[of Lemma~\ref{lemma:lipfunc}]
First note that, we have
	\begin{align*}
	g_{\nu,b}(\theta)-\nabla f_\nu(\theta)=&\frac1b\sum_{i=1}^{b}\frac{F(\theta+\nu u_i,\xi_i)-F(\theta,\xi_i')}{\nu}u_i(\theta)-\nabla f_\nu(\theta)\\
	=&\frac1b\sum_{i=1}^{b}\frac{F(\theta+\nu u_i,\xi_i)-F(\theta,\xi_i)}{\nu}u_i-\nabla f_\nu(\theta)+\frac1b\sum_{i=1}^{b}\frac{F(\theta,\xi_i)-F(\theta,\xi_i')}{\nu}u_i
	\end{align*}
	Hence, we have
	\begin{align*}
	\expec{\left\|g_{\nu,b}(\theta)-\nabla f_\nu(\theta)\right\|^2}=&2\expec{\left\|\frac1b\sum_{i=1}^{b}\frac{F(\theta+\nu u_i,\xi_i)-F(\theta,\xi_i)}{\nu}u_i-\nabla f_\nu(\theta)\right\|^2}\\ +&2\expec{\left\|\frac1b\sum_{i=1}^{b}\frac{F(\theta,\xi_i)-F(\theta,\xi_i')}{\nu}u_i\right\|^2}.
	\end{align*}

	Using, Lemma~\ref{lm:zogradvar}, we have,	
	\begin{align*}
	\expec{\left\|\frac1b\sum_{i=1}^{b}\frac{F(\theta+\nu u_i,\xi_i)-F(\theta,\xi_i)}{\nu}u_i-\nabla f_\nu(\theta)\right\|^2}\leq \frac{2(d+5)(\|\nabla f(\theta)\|^2+\sigma^2)}{b}+\frac{\nu^2 M^2(d+3)^3}{2b}.\numberthis\label{eq:truegraderrorgendiffnoise}
	\end{align*}
	Furthermore, note that
	\begin{align*}
	&\expec{\left\|\frac1b\sum_{i=1}^{b}\frac{F(\theta,\xi_i)-F(\theta,\xi_i')}{\nu}u_i\right\|^2}=\frac{1}{b^2}\sum_{i=1}^{b}\expec{\frac{(F(\theta,\xi_i)-F(\theta,\xi_i'))^2}{\nu^2}}\expec{\|u_i\|^2}\\
	\leq& \frac{L^2}{b^2}\sum_{i=1}^{b}\expec{\frac{(\xi_i-\xi_i')^2}{\nu^2}}\expec{\|u_i\|^2}=\frac{2dL^2\sigma^2}{b\nu^2}.\numberthis\label{eq:noisevargendiffnoise}
	\end{align*}
	Combining, \eqref{eq:truegraderrorgendiffnoise}, and \eqref{eq:noisevargendiffnoise}, we get the result stated in Lemma~\ref{lm:zogradesterrordiffnoise}.	
\end{proof} 


\section{Proofs for Section~\ref{sec:hdzlmc}}
\begin{proof}[of Theorem~\ref{thm:hdzlmc}]
First, we have,
\begin{align*}
\Pr\{\hat S\neq S^*\}
&= \Pr\{\max_{j\in D\setminus S^*}|[g_{\nu,b}]_j|>\tau \textrm{ or } \min_{j\in S^*}|[g_{\nu,b}]_j|<\tau\} \\
&\leq \Pr\{\max_{j\in D\setminus S^*}|[g_{\nu,b}]_j|>\tau\} + \Pr\{\min_{j\in S^*}|[g_{\nu,b}]_j|<\tau\} \\
&\leq \sum_{j\in D\setminus S^*}\Pr\{|\zeta_j|>\tau\} + \sum_{j\in S^*}\Pr\{|\zeta_j|>a'-\tau\},
\end{align*}
where $a'=a-M\nu\sqrt{s}\leq a-\|\nabla f(\theta)-\nabla f_\nu(\theta)\|$ is a lower bound for $|[\nabla f_\nu(\theta)]_j|$.
Next we utilize concentration inequalities to give a bound for the tail of approximation error $\zeta_j$. Denote $[g_{\nu,1}]_j = \frac{f(\theta+\nu u)-f(\theta)}{\nu}u_j \defeq \phi(\nu,u)u_j$, where $\phi(\nu,u)$ is sub-exponential with
\begin{align*}
\|\phi(\nu,u)\|_{\Psi_1}
&= \sup_{p\geq1} p^{-1}(\E[|\phi(\nu,u)|^p])^{1\slash p} \\
&\leq \sup_{p\geq1} p^{-1}(\E[|\frac{f(\theta+\nu u)-f(\theta)-\nabla f(\theta)^\top \nu u}{\nu}|^p])^{1\slash p} + \sup_{p\geq1} p^{-1}(\E[|\nabla f(\theta)^\top u|^p])^{1\slash p} \\
&\leq \frac12M\nu\sup_{p\geq1} p^{-1}(\E[\|u\|^{2p}])^{1\slash p} + \|\nabla f(\theta)\|\sup_{p\geq1} p^{-1}(\E[\|u\|^p])^{1\slash p} \\
&\leq M\nu\|u\|_{\Psi_2}^2 + \|\nabla f(\theta)\|\|u\|_{\Psi_2} \\
&\leq 2R\|u\|_{\Psi_2},
\end{align*}
where $\|\cdot\|_{\Psi_1} = \sup_{p\geq1}p^{-1}\E[|\cdot|^p]^{1\slash p}$ and $\|\cdot\|_{\Psi_2} = \sup_{p\geq1}p^{-1\slash2}\E[|\cdot|^p]^{1\slash p}$ are the sub-exponential and sub-Gaussian norm respectively (see, for example~\cite{vershynin2018high} for more details). In the last inequality we require that $\nu\leq\frac{R}{M\|u\|_{\Psi_2}}$. Note that $u\sim N(0,\boldsymbol{I}_d)$ can be replaced by $\sum_{k\in S^*}u_ke_k\sim N(0,\boldsymbol{I}_s)$ due to Assumption \ref{spars_assum}. Moreover, we have the following estimate.
\begin{align*}
\|u_1\|_{\Psi_2}
&\leq \inf\{c>0: \E\left[\exp\left\{\frac{u_1^2}{c^2}\right\}\right]\leq2\} = \sqrt{\frac83} \defeq C_1, \\
\|u\|_{\Psi_2}
&\leq \inf\{c>0: \E\left[\exp\left\{\frac{\|u\|^2}{c^2}\right\}\right]\leq2\} \\
&= \sqrt{\frac{2}{1-2^{-2\slash d}}} \\
&\leq \sqrt{\frac{d}{\log2(1-\log2)}} \defeq C_2\sqrt{d},
\end{align*}
which implies that $\|\phi(\nu,u)\|_{\Psi_1}\leq 2RC_2\sqrt{s},\;\|u_1\|_{\Psi_2}\leq C_1$. We now state the following concentration inequality proved in~\cite{balasubramanian2018tensor}.
\begin{lemma} 
    \label{lemma:sum_sub_exponential}
    Let $(X_i, Y_i)$, $i=1,\ldots, n$  be $n$ independent copies of random variables $X$ and $Y$. Let $X$ be a sub-Gaussian random variable with $\| X \|_{\psi_2} \leq \Upsilon_1$, and $Y$ be a sub-exponential random variable with   $\| Y\|_{\psi_1} \leq \Upsilon_2$ for some constants $\Upsilon_1$ and $\Upsilon_2$.  Then for any $t \geq K \cdot \max\{  \Upsilon_1 ^3, \Upsilon_1 \}   \cdot \Upsilon_2$, we have
\begin{align*}
     Pr \biggl \{ \bigg|\sum_{i=1}^n \big[X_i  \cdot Y_i - \E(X  Y)\big]\bigg|  \geq t \biggr \}   \leq 4 \exp \biggl \{  - K_1\cdot \min \biggl[ \biggl ( \frac{t}{\sqrt{n} \Upsilon_1  \cdot \Upsilon_2} \biggr)^{2}, \biggl ( \frac{t}{\Upsilon_1 \cdot \Upsilon_2} \biggr )^{2/3} \biggr] \biggr \},
    \end{align*}
       where $K$ and $K_1$ are absolute constants.
\end{lemma}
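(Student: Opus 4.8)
The plan is to reduce the statement to a Bernstein-type tail bound for a sum of independent, centered random variables with a controlled Orlicz $\psi_{2/3}$ norm, and then to establish that tail bound by a moment/Chebyshev argument. Throughout I use the moment characterization $\|Z\|_{\psi_\alpha}\asymp\sup_{p\ge1}p^{-1/\alpha}(\E|Z|^{p})^{1/p}$, consistent with the definitions of $\|\cdot\|_{\psi_1}$ and $\|\cdot\|_{\psi_2}$ recalled above. \textbf{Step 1 (the summand is sub-Weibull of order $2/3$).} By Cauchy--Schwarz, $\E|XY|^{p}\le(\E|X|^{2p})^{1/2}(\E|Y|^{2p})^{1/2}$, and since $(\E|X|^{2p})^{1/(2p)}\le\Upsilon_1\sqrt{2p}$ and $(\E|Y|^{2p})^{1/(2p)}\le 2\Upsilon_2 p$, one gets $(\E|XY|^{p})^{1/p}\le 2\sqrt{2}\,\Upsilon_1\Upsilon_2\,p^{3/2}$, i.e.\ $\|XY\|_{\psi_{2/3}}\le 2\sqrt{2}\,\Upsilon_1\Upsilon_2$; this is just the generalized H\"older inequality for Orlicz norms ($\tfrac12+\tfrac11=\tfrac32$). \textbf{Step 2 (centering).} Let $Z_i=X_iY_i-\E(XY)$, which are independent, identically distributed and mean zero. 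Because $|\E(XY)|\le\E|XY|\le\|XY\|_{\psi_{2/3}}$, Minkowski's inequality gives $(\E|Z_i|^{p})^{1/p}\le(\E|X_iY_i|^{p})^{1/p}+\|XY\|_{\psi_{2/3}}\le 4\sqrt{2}\,\Upsilon_1\Upsilon_2\,p^{3/2}$, so $\|Z_i\|_{\psi_{2/3}}\le K:=4\sqrt{2}\,\Upsilon_1\Upsilon_2$ and $\E Z_i^2\le 8K^2$.

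\textbf{Step 3 (Bernstein for sums of $\psi_{2/3}$ variables).} It remains to prove that, for $Z_i$ as above and $t$ above a fixed threshold,
\[
\Pr\Big\{\,\Big|\sum_{i=1}^{n}Z_i\Big|\ge t\,\Big\}\ \le\ 4\exp\Big(-c\,\min\big\{(t/(\sqrt{n}K))^{2},\,(t/K)^{2/3}\big\}\Big).
\]
I would obtain this from a Rosenthal/Bernstein-type moment bound: for every integer $p\ge1$,
\[
\Big(\E\Big|\sum_i Z_i\Big|^{p}\Big)^{1/p}\ \le\ C\big(\sqrt{pn}\,K+p^{3/2}K\big),
\]
in which the first term is the $L^2$ (variance) contribution and the second captures the maximal summand, via $\big\|\max_i|Z_i|\big\|_{p}\le(\sum_i\E|Z_i|^{p})^{1/p}\le n^{1/p}Kp^{3/2}\le eKp^{3/2}$ once $p\ge\log n$. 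Applying Markov's inequality to the $p$-th moment and optimizing $p$ (taking $p$ of order $\min\{t^{2}/(nK^{2}),(t/K)^{2/3}\}$) produces the displayed bound, after which substituting $K=4\sqrt{2}\,\Upsilon_1\Upsilon_2$ and relabeling constants finishes the lemma. An equivalent route is to truncate each $Z_i$ at a level $u$, apply the classical bounded-variable Bernstein inequality to the truncated part (variance proxy $\le8K^2$, range $\le2u$) and a $\psi_{2/3}$ tail union bound plus a truncation-bias estimate to the remainder, and then optimize $u$ over the two regimes; this is the path in~\cite{balasubramanian2018tensor}.

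Steps 1--2 are routine H\"older and moment bookkeeping. The crux is Step 3: getting the moment bound with the correct $p^{3/2}$ growth rate appropriate to $\psi_{2/3}$ summands, and — in either route — handling the $\log n$ threshold (equivalently, the restriction on how small $t$ may be) so that the factor $n$ coming from controlling $\max_i|Z_i|$, or from the tail union bound, is absorbed into the exponent rather than surviving as a prefactor. This is exactly the reason the lemma requires $t$ to exceed a fixed multiple of $\Upsilon_1\Upsilon_2$ (up to the stated dependence on $\Upsilon_1$).
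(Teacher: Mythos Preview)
The paper does not give its own proof of this lemma: it is quoted verbatim as a result ``proved in~\cite{balasubramanian2018tensor}'' and then applied as a black box inside the proof of Theorem~\ref{thm:hdzlmc}. Your sketch is a correct and standard derivation---showing $XY\in\psi_{2/3}$ via the Orlicz H\"older inequality and then invoking a Bernstein/Rosenthal moment bound for sub-Weibull summands---and your alternative truncation route is exactly the argument used in the cited reference, so there is nothing to contrast with the paper itself.

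One small caution: your claim that the $n$-independent lower bound on $t$ in the lemma is what absorbs the $p\ge\log n$ restriction is not quite the right explanation. A threshold of the form $t\gtrsim\Upsilon_1\Upsilon_2$ alone does not force $p^\star\gtrsim\log n$. What actually saves you is that the stated inequality is vacuous (right-hand side $\ge1$) unless $t\gtrsim\sqrt n\,\Upsilon_1\Upsilon_2$, and in that regime the optimal moment order is automatically large enough; equivalently, the truncation proof handles all $t$ uniformly by choosing the truncation level to depend on $t$ and $n$. With that clarification, the argument goes through.
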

From Lemma~\ref{lemma:sum_sub_exponential}, for $n\geq\max\left\{K_1\frac{2RC\sqrt{s}}{\tau},\left(\frac{2RC\sqrt{s}}{\tau}\right)^4\right\}$, we have:
\begin{align*}
\Pr\{|\zeta_j|\geq\tau\}
&= \Pr\left\{\left|\frac1n\sum_{k=1}^ng_{\nu,1}^k-\E[g_{\nu,1}]\right|\geq\tau\right\} \\
&\leq 4\exp\left\{-K_2\left(\frac{n\tau}{\|\phi(\nu,u)\|_{\Psi_1}\|u_1\|_{\Psi_2}}\right)^{2\slash3}\right\} \\
&\leq 4\exp\left\{-K_2\left(\frac{n\tau}{2RC\sqrt{s}}\right)^{2\slash3}\right\},
\end{align*}
where $C=C_1C_2=\sqrt{\frac{8}{3\log2(1-\log2)}}, K_1, K_2$ are absolute constants. Therefore, by setting the threshold $\tau=a'\slash2$, the probability of error is bounded by
\begin{align*}
\Pr\{\hat S\neq S^*\}
&\leq \sum_{j\in D\setminus S^*}\Pr\{|\zeta_j|>\tau\} + \sum_{j\in S^*}\Pr\{|\zeta_j|>a'-\tau\} \\
&\leq 4(d-s)\exp\left\{-K_2\left(\frac{n\tau}{2RC\sqrt{s}}\right)^{2\slash3}\right\} + 4s\exp\left\{-K_2\left(\frac{n(a'-\tau)}{2RC\sqrt{s}}\right)^{2\slash3}\right\} \\
&= 4d\exp\left\{-K_2\left(\frac{n(a-M\nu\sqrt{s})}{4RC\sqrt{s}}\right)^{2\slash3}\right\}.
\end{align*}
Given a pre-specified error rate $\epsilon>0$, it suffices to have $\nu\leq\frac{a}{2M\sqrt{s}} \wedge \frac{R}{MC_2\sqrt{s}}$ and 
\begin{align*}
n \geq \frac{8RC\sqrt{s}}{a}\left(\frac1{K_2}\log\frac{4d}{\epsilon}\right)^{3\slash2} \vee K_1\frac{8RC\sqrt{s}}{a} \vee \left(\frac{8RC\sqrt{s}}{a}\right)^4.
\end{align*}
\end{proof}

\end{document}